\DeclareMathOperator*{\esssup}{ess\,sup}
\newcommand{\norm}[2]{\left\lVert #1\right\rVert_{#2}}
\newtheorem{theorem}{Theorem}[section]
\newtheorem{lemma}[theorem]{Lemma}
\newtheorem{proposition}[theorem]{Proposition}
\newtheorem{example}[theorem]{Example}
\newtheorem{corollary}[theorem]{Corollary}
\newtheorem{remark}[theorem]{Remark}
\newcounter{assumption}
\newtheorem{assumption}[theorem]{Assumption}
\newcommand{\E}{\mathbb{E}}
\newcommand{\pP}{\mathbb{P}}
\newcommand{\R}{\mathbb{R}}
\newcommand{\D}{\text{ d}}
\newcounter{subassumption}[assumption]
\renewcommand{\thesubassumption}{(\textit{\roman{subassumption}})}
\renewcommand{\p@subassumption}{\theassumption}
\newcommand{\subasu}{
  \refstepcounter{subassumption}%
  \thesubassumption~\ignorespaces}
 \crefname{subassumption}{Assumption}{Assumptions}
\begin{document}

\title{Optimality conditions in control problems with random state constraints in probabilistic or almost-sure form}


\author{Caroline Geiersbach\thanks{Weierstrass Institute, 10117 Berlin, Germany 
  (\texttt{caroline.geiersbach@wias-berlin.de})}
\and Ren\'e Henrion\thanks{Weierstrass Institute, 10117 Berlin, Germany 
  (\texttt{rene.henrion@wias-berlin.de})}}

\date{\today}

\maketitle

\begin{abstract}
In this paper, we discuss optimality conditions for optimization problems {involving} random state constraints, which are modeled in probabilistic or almost sure form. While the latter can be understood as the limiting case of the former, the derivation of optimality conditions requires substantially different approaches. We apply them to a linear elliptic partial differential equation (PDE) with random inputs.
In the probabilistic case, we rely on the spherical-radial decomposition of Gaussian random vectors in order to formulate fully explicit optimality conditions involving a spherical integral. In the almost sure case, we derive optimality conditions and compare them to a model based on robust constraints with respect to the (compact) support of the given distribution.
\end{abstract}

\section{Introduction}
Uncertainty appears in many applications in optimization, from finance to medicine to engineering, due to unknown inputs or parameters in underlying systems. Ignoring uncertainty can lead to solutions that poorly reflect how the system behaves with high probability. This fact holds true no matter if the space of decisions is finite-dimensional or infinite-dimensional as in PDE-constrained optimization.  The present work concerns itself with the second context. For an introductory monograph on PDE-constrained optimization under uncertainty, we refer to \cite{martinez2018}. One way to classify different approaches in this field consists in considering the degree of risk involved in decision making: { on the one hand, one may deal with risk-neutral (expectation-based) models, e.g., \cite{conti2009,Gahururu2022,Milz2023}. On the other hand, one may be} rather interested in absolutely safe decisions and address uncertainty via almost sure constraints \cite{Geiersbach2022a,Geiersbach2021c}. In between these two extremes, reasonable compromises can be made based on several risk-averse approaches such as conditional value-at-risk (CVaR), value-at-risk (VaR), stochastic dominance, or robust constraints 
\cite{alphonse2022risk,conti2018,Kolvenbach2018,Kouri2018,martinez2018}.
Besides modeling, numerical solution, and structural analysis, there have been efforts in the past few years to derive optimality conditions for problems involving random PDEs, see \cite{Kouri2018} (without state constraints, meaning additional constraints on the solution of the PDE) or \cite{Gahururu2022,Geiersbach2022a,Geiersbach2021c} (with almost sure state constraints). 

In this paper, we consider optimality conditions for problems of the form
\begin{equation}
\label{eq:problem-introduction}
\min\limits_{u\in U}\,\,F(u)\quad\mbox{subject to (s.t.)}\quad\mathbb{P}(g(u,\xi )\leq 0)\geq p\quad (p\in (0,1]),
\end{equation}
where $U$ is a Banach space, $\xi$ is some random vector on a probability space $(\Omega ,\mathcal{F},\mathbb{P})$, $g$ is a (random) constraint function, and $p$ is some given probability level. 
Constraints like those appearing in \eqref{eq:problem-introduction} are referred to as probabilistic or chance constraints. They express the condition that a feasible decision $u$ has to guarantee the satisfaction of the random inequality constraint $g\leq 0$ at least with probability $p$. This can be equivalently formulated by a value-at-risk constraint as mentioned above. 
Probabilistic constraints have been introduced in the context of operations research problems more than 60 years ago by Charnes et al.~\cite{charnes1958}. Their algorithmical treatment and theoretical understanding have been fundamentally advanced by Prékopa, whose monograph \cite{prekopa1995} is still a fundamental reference on this topic in finite-dimensional optimization. Modern presentations can be found in \cite{Shapiro2009} or \cite{vanAckooij2020}, respectively. {The last two decades have seen substantial progress mainly in the development of numerical approaches to tackle probabilistic  constraints under discrete or continuous random distributions. We mention here as examples the methods of {\it scenario approximation} \cite{Campi2008,Esfahani2015}, {\it sample average approximation} \cite{Pagnoncelli2009}, {\it convex approximation} \cite{Nemirovski2006}, {\it difference-of-convex-functions (DC) approximation} \cite{Hong2011,ackooijdc}, {\it smooth approximation} \cite{pena,geletu17}, {\it kernel density estimators} \cite{caillau2018,Schuster2022},
{\it spherical-radial decomposition} \cite{vanAckooij_Henrion_2014,Farshbaf-Shaker2020}, {\it importance sampling} \cite{Barrera2016}, or {\it bilevel optimization} \cite{Tong}.
}
In recent years, there has been growing interest in considering probabilistic constraints in the framework of optimal control in general or PDE-constrained optimization in particular, e.g., \cite{caillau2018,Farshbaf-Shaker2020,Farshbaf-Shaker2018,geletu2020,goettlich2021,Kouri2023,Perez2022,Teka23}. These works include proposals for numerical approaches as well as structural investigations, e.g, existence and stability of solutions.

The abstract problem \eqref{eq:problem-introduction} encompasses PDE-constrained optimization subject to probabilistic state constraints. { The present work is, to the best of our knowledge, a first attempt to derive {\bf fully explicit} optimality conditions for a PDE-constrained optimization problem subject to {\bf joint} probabilistic random state constraints {\bf uniformly} with respect to the domain, considering the original problem {\bf as is}, taking into account the potential {\bf nonsmoothness} inherent to the model despite its possibly smooth input data and exploiting some hidden convexity in order to derive optimality conditions that are {\bf necessary and sufficient}. Another major contribution in this paper involves the comparison to the closely related problems involving {\bf almost sure and robust constraints}.}

{ We note that most work in the literature on PDE-constrained optimization subject to probabilistic constraints is not related to optimality conditions and does not consider the original problem but approximations thereof. 
The uniformity of random state constraints with respect to the domain leads to an {\bf infinite} number of smooth random inequalities. This complicates the analysis of the original problem considerably. A typical remedy in the literature is to either make recourse to approximations as mentioned before or to treat probabilistic constraints in an individual way (separately for each point in the domain). The latter approach is well-known to be problematic because high probabilities on an individual level may still result in low probabilities for the uniform level. In the present work, we investigate the original problem while keeping the joint character of probabilistic constraints. For such constraints, the name ``probust" (probabilistic/robust) was coined. Their theoretical analysis \cite{Farshbaf-Shaker2018,vanAckooij2020b} and algorithmic treatment \cite{Berthold2022} is still in its infancy and Section~\ref{sec:prob-constraints} of the present work makes a contribution to these investigations as well. A major challenge in the derivation of optimality conditions is the explicit characterization of the subdifferential (if not derivative) of the potentially nonsmooth probability function. The present paper provides a fully explicit and exact formula (not just an upper estimate) for this subdifferential, which is ready to use for numerical approaches.
Our workhorse for the derivation of optimality conditions will be the so-called {\em spherical-radial decomposition} of Gaussian random vectors (which actually generalizes to any elliptically-symmetric distribution). This tool has proven to be useful in many applications involving the numerical treatment of probabilistic constraints (e.g., \cite{Berthold2022,Farshbaf-Shaker2020}) as well as for the analytic derivation of (sub-)gradients of the decision-dependent probability function (e.g., \cite{Hantoute2019,vanAckooij_Henrion_2014,ackkoij-perez-2019}). As a typical outcome, these (sub-)gradients are represented as spherical integrals, which makes them---contrary to derivative formulae in which the domain of integration is decision-dependent---amenable to numerical approximation and theoretical manipulation.}

It is noteworthy that admitting a probability level of $p=1$ in \eqref{eq:problem-introduction} allows one (at least formally) to observe almost sure constraints of the type ``$g(u,\xi)\leq 0\,\,\mathbb{P}$-a.s." in the model. We shall see, however, that the derivation of optimality conditions in Section~\ref{sec:prob-constraints} requires a restriction of probability levels to values strictly smaller than one. It does not make sense either to consider optimality conditions in the limit when driving $p$ towards one because then the Lagrange multiplier tends to infinity; see Example~\ref{asvsprob}. Therefore, in order to arrive at optimality conditions for almost sure constraints, one has to rather use functional-analytic approaches originally developed for two-stage stochastic programming \cite{Rockafellar1976,Rockafellar1976b,Rockafellar1975,Rockafellar1976c} and further for problems in PDE-constrained optimization in \cite{Geiersbach2022a,Geiersbach2021c}. { For the almost sure problem, this involves working with Bochner spaces of the type $L^\infty_{\pP}(\Omega,X)$. We do not require reflexivity of $X$ as in \cite{Geiersbach2022a,Geiersbach2021c} and we present an efficient approach for deriving optimality conditions that greatly simplifies the analysis when compared to \cite{Geiersbach2022a,Geiersbach2021c}. In this approach, optimality conditions are first derived using standard arguments with Lagrange multipliers in $(L^\infty_{\pP}(\Omega,X))^*$ and then these multipliers are made more explicit using the usual Yosida--Hewitt-type decomposition.} Almost sure constraints are closely tied with robust constraints of the type ``$g(u,z)\leq 0$'' for all $z$ belonging to the support of the random vector. { We derive optimality conditions for the robust problem using the standard arguments in the PDE-constrained optimization literature and make a connection to semi-infinite programming.} This opens yet another perspective to the derivation of optimality conditions in case one is interested in absolute safety. In Section~\ref{sec:almost-sure}, we will shed some light on various approaches to optimality conditions in such models and their comparison in the context of the PDE mentioned above. 

\subsection{Notation}
\label{subsec:notation}
The dual space of a Banach space $X$ is denoted by $X^*$. Throughout, the duality pairing $\langle \cdot, \cdot \rangle_{X^*,X}$ will be written as $\langle \cdot, \cdot \rangle$ since the underlying spaces can be easily guessed by the elements used in the pairing. Weak convergence is denoted by $\rightharpoonup$. A norm on a Banach space $X$ is denoted by $\lVert \cdot \rVert_X$ and the Euclidean norm is denoted by $\lVert \cdot \rVert_2$. 

Given a set $D \subset \R^d$, the space $L^p(D)$ denotes the usual Lebesgue space with the Lebesgue measure on $D$. For $q \in [1,\infty]$, the Sobolev space $W^{1,q}(D)$ is given by the set of $L^q(D)$ functions with weak derivatives in $L^q(D)$; the set $W_0^{1,q}(D)$ is the set of $W^{1,q}(D)$ functions that vanish on the boundary $\partial D$. We have $H^{1}(D)=W^{1,2}(D)$ and the dual space $W^{-1,q}(D):=(W^{1,q'}(D))^*$ with $\tfrac{1}{q}+\tfrac{1}{q'}=1.$

Given a probability space  $(\Omega, \mathcal{F}, \pP)$ and a Banach space $X$, the Bochner space $L_{\pP}^r(\Omega, X):=L^r(\Omega,\mathcal{F}, \pP; X)$ is the set of all (equivalence classes of) strongly measurable functions $y\colon\Omega \rightarrow X$ having finite norm, where the norm is given by
\begin{equation*}
\lVert y \rVert_{L_{\pP}^r(\Omega,X)}:= \begin{cases}                                     (\int_\Omega \lVert y(\omega) \rVert_X^r \D \pP(\omega))^{1/r}, \quad &r < \infty\\
                                     \esssup_{\omega \in \Omega} \lVert y(\omega) \rVert_X, \quad &r=\infty
                                    \end{cases}.
\end{equation*}
Given a compact topological space $K$ and a Banach space $X$, the set $\mathcal{C}(K,X)$ is the set of continuous functions from $K$ into $X$, where the norm is given by $\norm{f}{\mathcal{C}(K,X)}:=\sup_{z \in K} \norm{f(z)}{X}.$
If $X = \R$, we write $\mathcal{C}(K)$ as a shorthand for $\mathcal{C}(K,\R).$ The set $\mathcal{C}^{0,\alpha}(K)$ denotes the space of $\alpha$-H\"older continuous functions on $D$ for $\alpha \in (0,1).$

Given Banach spaces $Y$ and $Z$, the set of bounded linear operators from $Y$ to $Z$ is denoted by 
$\mathcal{L}(Y,Z).$ The Fr\'echet derivative of $F\colon X \rightarrow \R$ is denoted by $DF \colon X \rightarrow \mathcal{L}(X,\R)=X^*$. For appropriate functions $f$ defined on $X$, we denote by $\partial f, \partial^C f, \partial^F f, \partial^M f$ the classical subdifferential of convex analysis, the Clarke subdifferential, the Fr\'echet subdifferential, and the Mordukhovich subdifferential, respectively. For the definition of these objects, we refer the reader to the standard monographs 
\cite{clarke1983,mordukhovich2006,peypouquet2015convex}. Partial subdifferentials are understood as subdifferentials of partial functions.  For a locally Lipschitzian function $f:X\to\mathbb{R}$, we denote by
\[
f^0(x;h):=\limsup\limits_{x'\to x, t\downarrow 0}\frac{f(x'+th)-f(x')}{h}
\]
its Clarke directional derivative at $x\in X$ in direction $h\in X$. We recall that a locally Lipschitzian function is called Clarke regular at some point if for all directions its ordinary directional derivative exists at that point and coincides with its Clarke directional derivative.
\subsection{A Model PDE}
\label{subsec:PDE}
As mentioned in the introduction, we will focus on the derivation of optimality conditions for a problem involving a random linear elliptic PDE. In this section, we will introduce this PDE and provide regularity results that will be of use later. We first consider the following parametrized elliptic PDE
\begin{equation}
 \label{eq:robust-PDE}
    \begin{alignedat}{3}
  \quad -\Delta  \hat{y}(x,z)  &=  u(x) + f(x,z), & &&\quad (x,z) \in D \times \Xi, \\
 \hat{y}(x,z) &=0, & &&\quad (x,z) \in \partial D \times \Xi.
    \end{alignedat}
\end{equation}
Here, $D$ is an open bounded subset of $\R^d$ with the boundary $\partial D$ and $\Xi$ is a (possibly unbounded) subset of $\R^m$ { playing the role of the support of a probability distribution. In Section \eqref{sec:prob-constraints} on probabilistic constraints, $\Xi$ will be the whole space, whereas in Section  \eqref{sec:almost-sure} on almost-sure and robust constraints it is restricted to be a compact set.} The Laplacian $\Delta y = \sum_{i=1}^d \frac{\partial^2}{\partial x_i^2} y$ acts only on the variable $x.$ For the first result, we will require the domain to be of \textit{class $S$}, meaning that there exist constants $\gamma \in (0,1)$ and $r_0 >0$ such that $\text{meas}(B_{r}(x)\backslash D) \geq \gamma \text{meas}( B_r(x))$ for all $x \in \partial D$ and for all $ r< r_0$. This classical definition from \cite{Kinderlehrer1980} prevents inward cusps of $D$, and is a mild requirement that covers many domains of interest from the literature, such as Lipschitz and convex domains.
We impose the following assumptions on problem \eqref{eq:robust-PDE}.
\begin{assumption}
\label{ass:PDE-standing}
The open and bounded set $D\subseteq\mathbb{R}^d$ ($d=1,2,3$) is of class $S$. Additionally, $u \in L^2(D)$ and the function $f\colon \mathbb{R}^d\times\mathbb{R}^m\to\mathbb{R}$ is defined by 
\begin{equation}
\label{eq:f}
 f(x,z):= f_0(x) + \sum_{i=1}^m z_i\phi_i(x)
\end{equation}
for some given $f_0, \phi_i\in L^2(D)$. 
\end{assumption}
The choice \eqref{eq:f} will be motivated later in Remark \ref{rem:about-the-PDE}. First, we establish well-posedness of \eqref{eq:robust-PDE} under this assumption.

\begin{lemma}
\label{lem:cont-dep-sol}
Suppose Assumption~\ref{ass:PDE-standing} holds. Then for any $u, f_0, \phi_i\in L^2(D)$ ($i=1, \dots, m$) and every $z \in \Xi$, there exists a unique solution $\hat{y}(\cdot,z) \in H_0^{1}(D)\cap \mathcal{C}(\bar{D})$ of \eqref{eq:robust-PDE}, { where $\bar{D}$ denotes the closure of $D$}. Moreover, there exists a constant $C>0$ such that 
\begin{equation}
\label{eq:apriori-PDE-first}
     \norm{\hat{y}(\cdot,z)}{\mathcal{C}(\bar{D})}   \leq C(\norm{z}{2} + \norm{u+f_0}{L^2(D)}).
\end{equation}
\end{lemma}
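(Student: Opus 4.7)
The plan is to prove existence/uniqueness and the estimate by a three-step argument: standard weak solution theory in $H_0^1(D)$ to get $\hat{y}(\cdot,z)$, an $L^\infty$-bound via Stampacchia's truncation method, and boundary continuity obtained from the class-$S$ assumption.

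First, fix $z \in \Xi$ and observe that the right-hand side $h_z := u + f(\cdot,z) = u + f_0 + \sum_{i=1}^m z_i\phi_i$ lies in $L^2(D)$ with
\[
\|h_z\|_{L^2(D)} \le \|u+f_0\|_{L^2(D)} + \sum_{i=1}^m |z_i|\,\|\phi_i\|_{L^2(D)} \le \|u+f_0\|_{L^2(D)} + C_1\|z\|_2
\]
by Cauchy--Schwarz, where $C_1 := \bigl(\sum_{i=1}^m \|\phi_i\|_{L^2(D)}^2\bigr)^{1/2}$. Since the bilinear form $(v,w)\mapsto \int_D \nabla v\cdot\nabla w\,dx$ is continuous and coercive on $H_0^1(D)$ (Poincar\'e inequality, which holds on any bounded $D$), Lax--Milgram yields a unique weak solution $\hat{y}(\cdot,z)\in H_0^1(D)$ to \eqref{eq:robust-PDE} together with the energy estimate $\|\hat{y}(\cdot,z)\|_{H_0^1(D)} \le C_2\|h_z\|_{L^2(D)}$.

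Next I would upgrade the regularity to $L^\infty(D)$. Because $d\in\{1,2,3\}$ we have $2 > d/2$, so $h_z \in L^p(D)$ for some $p>d/2$, and the classical Stampacchia/De Giorgi truncation argument (see, e.g., Kinderlehrer--Stampacchia \cite{Kinderlehrer1980}, Chapter II, Theorem B.2) yields a constant $C_3$, depending only on $D$, such that
\[
\|\hat{y}(\cdot,z)\|_{L^\infty(D)} \le C_3 \|h_z\|_{L^2(D)}.
\]
Interior continuity of $\hat{y}(\cdot,z)$ then follows from standard linear elliptic regularity (the solution is locally in $H^2$ by interior $H^2$-regularity, hence locally H\"older continuous in dimensions $d\le 3$ by Sobolev embedding, or alternatively by the De Giorgi--Nash theorem applied to bounded weak solutions with $L^p$ data, $p>d/2$).

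The main step---and the one where the geometric assumption on $D$ enters essentially---is continuity up to $\partial D$. Here I invoke the result of Stampacchia (in the form given in Kinderlehrer--Stampacchia \cite{Kinderlehrer1980}, Chapter III, Theorem 6.6): every boundary point of a domain of class $S$ is regular in the sense of Wiener for the Laplacian, so the bounded weak solution of $-\Delta \hat{y} = h_z$ with $h_z\in L^p(D)$, $p>d/2$, and zero Dirichlet data extends continuously to $\bar{D}$ with $\hat{y}(\cdot,z) = 0$ on $\partial D$. Consequently $\hat{y}(\cdot,z)\in H_0^1(D)\cap\mathcal{C}(\bar{D})$ and
\[
\|\hat{y}(\cdot,z)\|_{\mathcal{C}(\bar{D})} = \|\hat{y}(\cdot,z)\|_{L^\infty(D)} \le C_3 \|h_z\|_{L^2(D)} \le C_3\bigl(\|u+f_0\|_{L^2(D)} + C_1\|z\|_2\bigr),
\]
which gives \eqref{eq:apriori-PDE-first} with $C := C_3\max\{1,C_1\}$. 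The hard part is really bookkeeping: isolating the right cited theorem that covers the low-regularity boundary setting (class $S$ rather than Lipschitz), while the algebraic estimate on $\|h_z\|_{L^2(D)}$ and the Stampacchia bound are routine.
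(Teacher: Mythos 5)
Your proof is correct and follows essentially the same route as the paper: Lax--Milgram for existence and uniqueness in $H_0^1(D)$, followed by the $L^\infty$/continuity theory of Kinderlehrer--Stampacchia (the truncation bound and the boundary regularity for class-$S$ domains are precisely the content of the Chapter~II appendices the paper cites). You merely unpack the citation---including the elementary estimate $\|u+f(\cdot,z)\|_{L^2(D)}\le \|u+f_0\|_{L^2(D)}+C_1\|z\|_2$ that the paper leaves implicit---which is a legitimate and complete elaboration of the same argument.
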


\begin{proof}
By the Lax--Milgram lemma, the operator $-\Delta$ defines an isomorphism between $H_0^1(D)$ and $H^{-1}(D).$ In particular, problem \eqref{eq:robust-PDE} has a unique solution $\hat{y}(\cdot,z) \in H_0^{1}(D)$ for every $z \in \Xi$ and every $u, f_0, \phi_i \in L^2(D)$ ($i=1, \dots, m$). 
For $d=1$, \eqref{eq:apriori-PDE-first} follows trivially. For $d=2, 3$, { continuity of the solutions and the estimate \eqref{eq:apriori-PDE-first}} follows by \cite[Chapter II, Appendix B and C]{Kinderlehrer1980}.
\end{proof}

It is worth mentioning that \cite{Kinderlehrer1980} gives an even stronger result than we need here, namely H\"older continuity of solutions as well as continuity of the operator $A_q^{-1}\colon W^{-1,q}(D)\rightarrow \mathcal{C}^{0,\alpha}(\bar{D})$ for some $\alpha \in (0,1)$, $A_q$ being the part of $-\Delta$ in $W^{-1,q}(D)$ ($q=4$). Such regularity can be obtained for more general elliptic PDEs than presented here; see \cite{Haller2022a} and the references therein. Due to the continuity of the embeddings $\iota_1\colon L^2(D) \rightarrow W^{-1,q}(D)$ (for $d=1,2,3$) and $\iota_2\colon \mathcal{C}^{0,\alpha}(\bar{D}) \rightarrow \mathcal{C}(\bar{D})$, the operator $A \colon L^2(D) \rightarrow \mathcal{C}(\bar{D})$ defined by $A:=\iota_2 \circ A_q \circ \iota_1$ has a continuous inverse. With this operator, we can define the parametrized control-to-state operator $S\colon L^2(D) \times \R^m \rightarrow \mathcal{C}(\bar{D})$ by
\begin{equation}
\label{affinedecomp}
 S(u,z):= A^{-1}(u+f(\cdot, z))=:P(u,z)+y_0.
\end{equation} 
Here, $P(u,z)=A^{-1}u+\sum_{i=1}^m z_i A^{-1}\phi_i$ and $y_0= A^{-1}f_0.$ 
Due to \eqref{eq:apriori-PDE-first}, we have
\begin{equation}
\label{eq:apriori-PDE}
     \norm{S(u,z)}{\mathcal{C}(\bar{D})}   \leq C(\norm{z}{2} + \norm{u+f_0}{L^2(D)}),
\end{equation}
making $S$ a continuous operator. Applying \eqref{eq:apriori-PDE} to the special case $f_0=0$, we also have 
\begin{equation}\label{pest}
\norm{P(u,z)}{\mathcal{C}(\bar{D})}   \leq C(\norm{z}{2} + \norm{u}{L^2(D)})  
\end{equation}
so that $P$ is a continuous linear operator.

Now, suppose $\xi=(\xi_1, \dots, \xi_m)\colon \Omega \rightarrow \R^m$ is a random vector. We will consider the following elliptic PDE with a random right-hand side generated by $\xi$:
\begin{equation}
    \begin{alignedat}{2}
 -\Delta  y(x,\omega)  &=  u(x) + f(x,\xi(\omega)), \quad &&x \in D \quad \text{ $\mathbb{P}$-a.s.}, \\
 y(x,\omega) &=0, \quad &&x \in \partial D \quad \text{$\mathbb{P}$-a.s.}
    \end{alignedat}
    \label{eq:PDE}
\end{equation}

\noindent
\begin{remark}
\label{rem:about-the-PDE}
The random source term from \eqref{eq:PDE} in combination with the structure  given by \eqref{eq:f} might result, for instance, from the truncation of a Karhunen--Lo\`eve expansion of a random field on $D$. Since the random vector $\xi$ has images in $\R^m$, the source term also satisfies the finite-dimensional noise assumption frequently employed in numerical simulations. 
It is worth mentioning that, for the purpose of this work, we exclude the case of random diffusion terms and that this, in combination with the structure afforded by \eqref{eq:f}, will be needed to prove convexity of the probability function in Section~\ref{subsec:prob-PDE}. 
\end{remark}

Finally, the following measurability statement holds true and thus justifies the setting of \eqref{eq:PDE}.
\begin{lemma}\label{measstat}
For each $x\in\bar{D}$ the mapping $\omega\mapsto y(x,\omega )$ is measurable. 
\end{lemma}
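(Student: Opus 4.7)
The plan is to exploit the affine decomposition of the control-to-state operator $S$ in the random parameter, which makes the measurability essentially immediate once the representation is in hand. First, I would identify $y(x,\omega)$ with $S(u,\xi(\omega))(x)$, where $S$ is the operator defined in \eqref{affinedecomp}. By Lemma~\ref{lem:cont-dep-sol} (applied pointwise in $z = \xi(\omega)$), this is the well-defined evaluation of the unique weak solution of \eqref{eq:PDE} at the point $x \in \bar{D}$.

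Next, I would invoke the explicit affine structure
\[
S(u,z)(x) = (A^{-1}u)(x) + \sum_{i=1}^m z_i (A^{-1}\phi_i)(x) + y_0(x),
\]
which follows directly from \eqref{affinedecomp} and the linearity of $A^{-1}$. Because $A^{-1}u, A^{-1}\phi_i, y_0 \in \mathcal{C}(\bar{D})$, each of the quantities $(A^{-1}u)(x)$, $(A^{-1}\phi_i)(x)$, and $y_0(x)$ is a well-defined real number for every fixed $x \in \bar{D}$. Substituting $z = \xi(\omega)$ yields
\[
y(x,\omega) = (A^{-1}u)(x) + \sum_{i=1}^m \xi_i(\omega) (A^{-1}\phi_i)(x) + y_0(x),
\]
so that, for fixed $x$, the mapping $\omega \mapsto y(x,\omega)$ is an affine function of the real-valued random variables $\xi_1, \dots, \xi_m$ with deterministic coefficients.

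Finally, since $\xi\colon\Omega \to \mathbb{R}^m$ is a random vector, each component $\xi_i$ is $\mathcal{F}$-measurable. A finite linear combination of measurable real-valued functions (plus a constant) is measurable, which gives the claim. There is essentially no obstacle here: the only subtlety is noting that pointwise evaluation at $x \in \bar{D}$ is well defined thanks to the continuity afforded by Lemma~\ref{lem:cont-dep-sol}, so no further argument through separability of $\mathcal{C}(\bar{D})$ or Pettis' theorem is needed. If one preferred to avoid invoking the affine decomposition explicitly, the same conclusion could be reached by observing that $\omega \mapsto S(u,\xi(\omega))$ is the composition of the measurable map $\xi$ with the continuous map $S(u,\cdot)\colon \mathbb{R}^m \to \mathcal{C}(\bar{D})$, hence strongly measurable, and that the Dirac evaluation $\delta_x\colon \mathcal{C}(\bar{D}) \to \mathbb{R}$ is continuous.
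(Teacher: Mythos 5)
Your proof is correct and follows essentially the same route as the paper: both arguments rest on the affine decomposition \eqref{affinedecomp} together with the measurability of $\xi$, the paper phrasing this as continuity of $z\mapsto [S(u,z)](x)$ (via the linearity of $P$ and the estimate \eqref{pest}) composed with the measurable map $\xi$, while you write out the resulting affine dependence on $\xi_1,\dots,\xi_m$ explicitly. The only cosmetic difference is that your version makes the real-valued map $\omega\mapsto y(x,\omega)$ a visible linear combination of measurable random variables, which is a perfectly valid (and slightly more explicit) way to conclude.
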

\begin{proof}
Thanks to $y(x,\omega )=[S(u,\xi (\omega ))](x)$ and to $\xi$ being measurable as a random vector, the claim follows by the continuity of the mappings $z\mapsto [S(u,z)](x)$ for each $x\in\bar{D}$. To see this, let $z_n\to z$ and observe that, by linearity of $P$ and by \eqref{pest},
\[
|[S(u,z)](x)-[S(u,z_n)](x)|=|[P(0,z-z_n)](x)|\leq C\norm{z-z_n}{2}\to 0\quad\forall x\in\bar{D}.
\]
\end{proof}

\section{Probabilistic constraints in optimal control}
\label{sec:prob-constraints}
Before dealing with probabilistic state constraints in the application, we collect some general results on probabilistic constraints.
\subsection{General framework}\label{generalframe}
We consider the following general optimization problem with probabilistic constraints:
\begin{equation}\label{optprob}
\min\limits_{u\in U}\,\,F(u)\mbox{ s.t. }\varphi(u)\geq p\quad (p\in (0,1]),
\end{equation}
{ where $F\colon U\to\mathbb{R}$ is some cost function and $\varphi:U\to\mathbb{R}$ denotes a probability function defined by 
\[
\varphi (u):=\mathbb{P}(\omega\mid g(u,\xi(\omega))\leq 0)
\]
with $g\colon U\times\mathbb{R}^m\to\mathbb{R}$ being some constraint function. The following assumption will apply throughout this section:
\begin{assumption}
\label{ass:general}
\subasu $U$ is a reflexive and separable Banach space, \label{subasu:U}
\subasu $F$ is convex and Fréchet differentiable, \label{subasu:F}
\subasu $\xi$ is an $m$-dimensional Gaussian random vector defined on a probability space $(\Omega, \mathcal{F}, \pP)$ and having a centered Gaussian distribution $\mathcal{N}(0,\Sigma)$ with nondegenerate covariance matrix $\Sigma$, \label{subasu:xi}\subasu  $g$ is locally Lipschitzian and $g(u,\cdot )$ is convex for all $u\in U$. \label{subasu:g}
\end{assumption}
}
\noindent
As $\varphi$ is not given analytically, it is important to derive some analytical properties, { specifically, Lipschitz continuity and convexity}, from the given data $(g,\xi)$. To this end, we will fix a point of interest $\bar{u}\in U$ and require two additional assumptions at that point. The first one requires that the mean zero of the distribution of $\xi$ is a Slater point of the convex inequality $g(\bar{u},z)\leq 0$:
\begin{equation}\label{sp}
g(\bar{u},0)< 0.   
\end{equation}
We note that this assumption is not restrictive. Indeed, requiring a Slater point is necessary in order to guarantee that the probability function $\varphi$ is at least continuous. But, if some Slater point exists at all, then the mean zero will be a Slater point too whenever $\varphi(\bar{u})\geq 0.5$, see \cite[Proposition 3.11]{vanAckooij_Henrion_2014}. On the other hand, $\varphi(\bar{u})$ is typically close to one in probabilistic programming.

{ Assumption~\ref{ass:general}} along with \eqref{sp} guarantee that $\varphi$ is (strongly) continuous at $\bar{u}$ (see, e.g., \cite[Theorem 1]{Hantoute2019}). However, the fact that $g$ is even locally Lipschitzian does not yet imply the same property for $\varphi$ despite the nice probability distribution (multivariate Gaussian), see \cite[Example 1]{Hantoute2019}. Similarly, Fréchet differentiability of $g$ does not imply the same property of $\varphi$, see \cite[Proposition 2.2]{vanAckooij_Henrion_2014}. These counterexamples are based on the fact that at the point of interest $\bar{u}\in U$, the set
\begin{equation}\label{bounded-realizations}
M:=\{z\in\mathbb{R}^m\mid g(\bar{u},z)\leq 0\}
\end{equation}
of feasible realizations of the random vector may be unbounded. 
In order to derive the local Lipschitz continuity of $\varphi$ in our setting and then to estimate its Clarke subdifferential, we need the following condition of moderate growth to be satisfied at $\bar{u}$: 
\begin{equation}\label{growth}
\exists l>0\,\,\forall h\in U:g^{\circ}(\cdot ,z)(u;h)\leq l \left\Vert z\right\Vert_2^{-m}\exp\left(\frac{\left\Vert z\right\Vert_2^2}{2\left\Vert \Sigma^{1/2}\right\Vert^2} \right)\left\Vert h\right\Vert_U \ \forall
u\in \mathbb{B}_{1/l}\left(\bar{u} \right), \forall z: \left\Vert z\right\Vert_2\geq l.
\end{equation}
Here, $g^{\circ}(\cdot ,z)(u;h)$ refers to the Clarke directional derivative of the locally Lipschitzian (by { Assumption~\ref{ass:general}}) partial function $g(\cdot,z)$ at the argument $u$ in direction $h$. Moreover, $\Sigma^{1/2}$ denotes a root of $\Sigma$.

Next, we introduce a radial probability function $e\colon U\times\mathbb{S}^{m-1}\to\mathbb{R}$ defined by 
\begin{equation}\label{radprob}
e(u,v):=\mu_\eta\{r\geq 0\mid g(u,r\Sigma^{1/2}v)\leq 0\},
\end{equation}
where { $\mathbb{S}^{m-1}=\{v\in\mathbb{R}^m\mid \|v\|_2=1\}$ is the unit sphere in $\mathbb{R}^m$}, $\mu_\eta$ is the one-dimensional chi distribution with $m$ degrees of freedom. The following representation of the total probability function $\varphi$ as a spherical integral over the radial probability function $e$ is a consequence of the well-known {\em spherical radial decomposition} of Gaussian random vectors: 
\begin{equation}\label{srd}
\varphi (u)=\int\limits_{\mathbb{S}^{m-1}}e(u,v) \D \mu_\zeta (v)\quad (u\in U).
\end{equation}
Here, $\mu_\zeta$ refers to the uniform distribution on $\mathbb{S}^{m-1}$.
The following result on subdifferentiation under the integral sign holds true:
\begin{theorem}[\cite{Hantoute2019}, Theorem 5, Corollary 2 and Proposition 6]\label{probderiv}
In addition to { Assumption~\ref{ass:general}}, let \eqref{sp} and \eqref{growth} be satisfied at some $\bar{u}\in U$ (instead of \eqref{growth}, we may alternatively assume that the set $M$ in \eqref{bounded-realizations} is bounded). Then, the partial radial probability functions $e(\cdot ,v)$, $v\in\mathbb{S}^{m-1}$, are uniformly locally Lipschitzian around $\bar{u}$ with a Lipschitz constant independent of $v$. Moreover, the (total) probability function $\varphi$ is locally Lipschitzian around $\bar{u}$ and its Clarke subdifferential at $\bar{u}$ can be estimated from above by
\begin{equation}\label{clarkeinclu}
\partial^C\varphi (\bar{u})\subseteq\int\limits_{\mathbb{S}^{m-1}}\partial^C_ue(\bar{u},v) \D \mu_\zeta (v),
\end{equation}
where $\partial^C$ is the Clarke subdifferential and $\partial^C_u$ the partial Clarke subdifferential with respect to $u$. If, in addition, the condition
\begin{equation}\label{measurezero2}
\mu_\zeta\{v\in\mathbb{S}^{m-1}\mid\# \partial^C_ue(\bar{u},v)\geq 2\}=0
\end{equation}
is satisfied, then the Clarke subdifferential of $\varphi$ at $\bar{u}$ reduces to a singleton and equality holds in \eqref{clarkeinclu}. As a consequence, $\varphi$ is strictly differentiable at $\bar{u}$ in the Hadamard sense \cite[p.~30]{clarke1983}.
\end{theorem}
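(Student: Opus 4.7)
The plan is to use the spherical--radial decomposition \eqref{srd} in order to reduce every assertion about the total probability function $\varphi$ to a corresponding assertion about the family of radial probabilities $\{e(\cdot,v)\}_{v\in\mathbb{S}^{m-1}}$, and then to pass under the integral sign with the help of a Leibniz-type rule. The crucial step is therefore to establish uniform local Lipschitz continuity of $e(\cdot, v)$ at $\bar u$, with a constant that does not depend on $v$; once this is in hand the remaining claims fall out of standard machinery.

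To obtain the uniform Lipschitz estimate, I would first exploit (GA) and \eqref{sp}. Convexity of $g(u,\cdot)$ together with the Slater property $g(\bar u,0)<0$ shows that, for $u$ in a sufficiently small neighbourhood of $\bar u$ and every $v\in\mathbb{S}^{m-1}$, the radial feasible set in \eqref{radprob} is an interval $[0,\rho(u,v)]$ with $\rho(u,v)\in (0,+\infty]$; consequently $e(u,v)=F_\eta(\rho(u,v))$, where $F_\eta$ is the chi cdf with $m$ degrees of freedom. Variation of $e$ in $u$ is then governed by variation of $\rho$ weighted by the radial density $f_\eta(\rho)\propto\rho^{m-1}e^{-\rho^2/2}$. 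If $M$ in \eqref{bounded-realizations} is bounded, $\rho$ stays globally bounded, its variation in $u$ is controlled via the Slater margin and the local Lipschitz constant of $g$, and boundedness of $f_\eta$ on a compact interval closes the argument. The delicate (and generic) case is unbounded $M$: here $\rho(u,v)$ can be arbitrarily large in certain directions, and its sensitivity in $u$ is expressed through the Clarke directional derivative $g^\circ(\cdot,z)(u;h)$ evaluated at $z=\rho\Sigma^{1/2}v$. The growth condition \eqref{growth} is engineered precisely so that the weight $\left\Vert z\right\Vert_2^{-m}\exp\!\bigl(\left\Vert z\right\Vert_2^2/(2\left\Vert\Sigma^{1/2}\right\Vert^2)\bigr)$ compensates the decay of $f_\eta$ and yields an $L^\infty$ bound in $v$. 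This is where the whole proof has its weight; balancing polynomial growth of $g^\circ$ against Gaussian/chi decay, uniformly over the sphere, is the main obstacle and dictates the exact shape of \eqref{growth}.

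Once the uniform radial Lipschitz bound is available, local Lipschitz continuity of $\varphi$ at $\bar u$ follows immediately by integrating over $\mathbb{S}^{m-1}$ in \eqref{srd}. The inclusion \eqref{clarkeinclu} is then a direct application of Clarke's rule for subdifferentiation under a parametric integral: the hypotheses---uniform Lipschitzness in $u$ with $\mu_\zeta$-integrable (in fact uniformly bounded) Lipschitz constant, and measurability of $v\mapsto e(u,v)$ for each $u$---are exactly what the previous step delivers, while measurability of the multifunction $v\mapsto \partial^C_u e(\bar u,v)$ (upper semicontinuous with nonempty compact convex values, by local Lipschitzness in $u$) permits the usual Aumann integral on the right-hand side.

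Finally, if \eqref{measurezero2} holds then for $\mu_\zeta$-almost every $v$ the set $\partial^C_u e(\bar u,v)$ reduces to a singleton; a measurable selection argument collapses the Aumann integral in \eqref{clarkeinclu} to a single element of $U^*$, forcing $\partial^C\varphi(\bar u)$ itself to be a singleton. The classical fact that a locally Lipschitzian function whose Clarke subdifferential at a point is a singleton is strictly Hadamard differentiable there (\cite[Prop.~2.2.4]{clarke1983}) then completes the argument. In summary, the only genuinely hard step is the uniform radial Lipschitz estimate driven by \eqref{growth}; every subsequent conclusion is a clean consequence of classical nonsmooth-analytic tools applied to the spherical integral representation \eqref{srd}.
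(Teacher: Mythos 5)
This theorem is imported from \cite{Hantoute2019} and the paper gives no proof of its own, so the only meaningful comparison is with the argument of that reference: your outline --- uniform local Lipschitz estimates for the radial functions $e(\cdot,v)$ obtained from the Slater condition \eqref{sp} and the growth condition \eqref{growth}, followed by Clarke's rule for subdifferentiation under the integral sign applied to \eqref{srd}, and the singleton/strict-differentiability argument under \eqref{measurezero2} --- reconstructs exactly that strategy. The plan is correct; the only caveat is that the central estimate (controlling the sensitivity of $\rho(u,v)$ in $u$ by the Clarke directional derivative of $g$ divided by the Slater margin and balancing it against the chi density uniformly in $v$) is described rather than carried out, and this is precisely the computation performed in \cite{Hantoute2019} and reused later in the paper's Lemma \ref{techlemma1}.
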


\noindent
The integral above is to be understood in the sense of Aumann. In particular, the asserted inclusion means that for each $x^*\in\partial^C\varphi (\bar{u})$ there exists a measurable selection $\hat{x}^*_v\in\partial^C_ue(\bar{u},v)$ (for $\mu_\zeta$-a.e. $v\in\mathbb{S}^{m-1}$) such that 
\[
x^*(h)=\int\limits_{\mathbb{S}^{m-1}}\hat{x}^*_v(h) \D \mu_\zeta (v)\quad\forall h\in U.
\]
At this point one might wonder why the inclusion in Theorem \ref{probderiv} is not sharp apart from the differentiable situation under condition \eqref{measurezero2}. The reason lies in Clarke's theorem on subdifferentiation of integral functionals \cite[Theorem 2.7.2]{clarke1983}, which provides a sharp estimate only in case that the integrand is Clarke regular. It is easily seen that the radial probability function fails to be Clarke regular in general. However, at this point one may pick up a remark by Clarke \cite[Remark 2.3.5]{clarke1983} pointing to the fact that sharp results may be shown also for functions being the negative of a Clarke regular one. This observation allows us in the following, by slightly strengthening our assumptions, to derive an identity rather than upper estimate in the formula of Theorem \ref{probderiv}.
In order to prepare this argument, we observe that by continuity, 
\eqref{sp} holds locally, i.e., there exists some neighborhood $\mathcal{N}(\bar{u})$ of $\bar{u}$ with $g(u,0)<0$ for all $u\in\mathcal{N}(\bar{u})$. Then, 
thanks to the convexity assumption in { Assumption~\ref{ass:general}}, for each $u\in\mathcal{N}(\bar{u})$ and each $v\in\mathbb{S}^{m-1}$, there exists a unique (possibly infinite) {$\rho (u,v)\in\mathbb{R}\cup\{\infty\}$} such that 
\begin{equation}\label{rhodef}
\{r\geq 0\mid g(u,r\Sigma^{1/2}v)\leq 0\}=[0,\rho(u,v)].
\end{equation}
If, in particular, $\rho (u,v)<\infty$, then this value is the unique solution in $r$ of the equation
\begin{equation}\label{uniquesolution}
g(u,r\Sigma^{1/2}v)=0.    
\end{equation}
In other words, we have defined a ``radius function" $\rho\colon \mathcal{N}(\bar{u})\times\mathbb{S}^{m-1}\to\mathbb{R}\cup\{\infty\}$ such that the radial probability function $e$ from \eqref{radprob} may be represented locally around $\bar{u}$ by
\begin{equation}\label{erep}
e(u,v)=\left\{
\begin{array}{ll}
F_\eta(\rho(u,v))&\mbox{if }\rho (u,v)<\infty\\
1&\mbox{if }\rho (u,v)=\infty
\end{array}\right.\quad\forall u\in\mathcal{N}(\bar{u})\,\,\forall v\in\mathbb{S}^{m-1},
\end{equation}
where $F_\eta$ refers to the cumulative distribution function of the (one-dimensional) chi distribution with $m$ degrees of freedom. We are now in a position to establish the Clarke regularity of the negative radial probability function.
\begin{lemma}\label{clarkereg}
In addition to { Assumption~\ref{ass:general}}, suppose that the constraint function $g$ is jointly convex in both variables. Consider some 
$\bar{u}\in U$ with \eqref{sp} and assume that the set $M$ in \eqref{bounded-realizations} is bounded. Then, for each $v\in\mathbb{S}^{m-1}$, the function $-e(\cdot ,v)$ is Clarke regular on the open neighborhood $\mathcal{N}(\bar{u})$ introduced above.
\end{lemma}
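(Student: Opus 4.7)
The overall strategy is to exploit the composition $e(\cdot,v)=F_\eta\circ\rho(\cdot,v)$ implicit in \eqref{erep}: I will show that the radius function $\rho(\cdot,v)$ is concave (so that $-\rho(\cdot,v)$ is convex and thus Clarke regular), and then transfer this regularity to $-e(\cdot,v)$ through the smooth, strictly increasing outer function $F_\eta$. The concavity of $\rho(\cdot,v)$ on $\mathcal{N}(\bar{u})$ is immediate from the joint convexity of $g$: for $u_1,u_2\in\mathcal{N}(\bar{u})$, $\lambda\in[0,1]$, and $r_i:=\rho(u_i,v)$, the inequality $g(u_i,r_i\Sigma^{1/2}v)\leq 0$ (cf.\ \eqref{uniquesolution}) combined with joint convexity gives
\[
g\bigl(\lambda u_1+(1-\lambda)u_2,\;(\lambda r_1+(1-\lambda)r_2)\Sigma^{1/2}v\bigr)\leq 0,
\]
which amounts to $\rho(\lambda u_1+(1-\lambda)u_2,v)\geq\lambda\rho(u_1,v)+(1-\lambda)\rho(u_2,v)$.

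Next I would use the boundedness of $M$ to shrink $\mathcal{N}(\bar{u})$ (if needed) so that $\rho(\cdot,v)$ stays finite and positive on $\mathcal{N}(\bar{u})\times\mathbb{S}^{m-1}$. Since $g(\bar{u},\cdot)$ is convex with bounded zero-sublevel set and Slater point $0$, one has $g(\bar{u},\cdot)\geq\delta>0$ uniformly on some sphere $\partial B_{R+1}(0)\supset M$, and local Lipschitzness of $g$ preserves a strict positive bound for $u$ close to $\bar{u}$. Combined with convexity of $g(u,\cdot)$ and the Slater condition $g(u,0)<0$, every sublevel set is then contained in $B_{R+1}(0)$, so $\rho(u,v)<\infty$; the Slater condition itself forces $\rho(u,v)>0$. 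Hence $e(u,v)=F_\eta(\rho(u,v))$ throughout the shrunk neighborhood, where the concavity of $\rho(\cdot,v)$ upgrades to local Lipschitzness and so $-\rho(\cdot,v)$ is a real-valued convex function, Clarke regular at every $u\in\mathcal{N}(\bar{u})$, with one-sided derivative $\rho'(u;h,v)$ existing for every $h\in U$ and $[-\rho(\cdot,v)]^{\circ}(u;h)=-\rho'(u;h,v)$. For the Clarke regularity of $-e(\cdot,v)$ at such $u$, the smooth chain rule gives the ordinary directional derivative $[-e(\cdot,v)]'(u;h)=-F_\eta'(\rho(u,v))\,\rho'(u;h,v)$, while the mean value theorem applied to $F_\eta$ yields, for $(u',t)$ near $(u,0^+)$,
\[
-e(u'+th,v)+e(u',v)=-F_\eta'(\xi_{u',t})\bigl[\rho(u'+th,v)-\rho(u',v)\bigr]
\]
with $\xi_{u',t}$ between $\rho(u',v)$ and $\rho(u'+th,v)$, whence $\xi_{u',t}\to\rho(u,v)$ by continuity of $\rho(\cdot,v)$. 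Factoring out $F_\eta'(\xi_{u',t})\to F_\eta'(\rho(u,v))>0$ (using continuity of $F_\eta'$ together with boundedness of the bracketed quotient) allows the $\limsup$ to act only on the remaining difference quotient of $-\rho(\cdot,v)$, giving
\[
[-e(\cdot,v)]^{\circ}(u;h)=F_\eta'(\rho(u,v))\,[-\rho(\cdot,v)]^{\circ}(u;h)=[-e(\cdot,v)]'(u;h),
\]
which is exactly the required regularity identity.

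The main technical hurdle I anticipate is the factoring step in the Clarke directional derivative: carefully justifying that the variable coefficient $F_\eta'(\xi_{u',t})$---which tends to the \emph{strictly positive} constant $F_\eta'(\rho(u,v))$, since the chi density is positive on $(0,\infty)$ and $\rho(u,v)>0$---can indeed be extracted from the $\limsup$. This hinges on the uniform boundedness of the difference quotients of the locally Lipschitz function $\rho(\cdot,v)$ together with $F_\eta'(\rho(u,v))$ being bounded away from zero, which together prevent any pathological interplay of a vanishing coefficient with an unbounded quotient.
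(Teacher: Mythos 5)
Your argument is correct and follows essentially the same route as the paper: concavity of $\rho(\cdot,v)$ obtained from the joint convexity of $g$ via \eqref{uniquesolution} and \eqref{rhodef}, followed by transferring the Clarke regularity of the convex function $-\rho(\cdot,v)$ through the smooth, nondecreasing outer function $F_\eta$. The only difference is presentational: you carry out the composition step by hand with the mean value theorem (and spell out why $\rho$ stays finite near $\bar{u}$), whereas the paper extends $F_\eta$ to a $C^1$ function on all of $\mathbb{R}$ and invokes the chain rule for Clarke regular functions \cite[Theorem 2.3.9 (i)]{clarke1983}.
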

\begin{proof}
By our assumptions and by \eqref{erep}, the radial probability function has the representation
\[
e(u,v)=F_\eta(\rho(u,v))\quad\forall u\in\mathcal{N}(\bar{u})\,\,\forall v\in\mathbb{S}^{m-1}.
\]
We observe first that for each fixed $v\in\mathbb{S}^{m-1}$, the partial radius function $\rho (\cdot ,v)\colon \mathcal{N}(\bar{u})\to\mathbb{R}$ is concave. Indeed, let $u^1,u^2\in \mathcal{N}(\bar{u})$ and $\lambda\in [0,1]$ be arbitrary. Then, by \eqref{uniquesolution},
\[
g(u^1,\rho (u^1,v)\Sigma^{1/2}v)=g(u^2,\rho (u^2,v)\Sigma^{1/2}v)=0
\]
and thus, by convexity of $g$,
\[
g(\lambda u^1+(1-\lambda)u^2,\lambda\rho (u^1,v)\Sigma^{1/2}v+(1-\lambda)\rho (u^2,v)\Sigma^{1/2}v)\leq 0.
\]
Now, \eqref{rhodef} yields that
\[
\lambda\rho (u^1,v)+(1-\lambda)\rho (u^2,v)\leq\rho (\lambda u^1+(1-\lambda)u^2,v).
\]
This shows the asserted concavity statement. Next, we extend the cumulative distribution function for the chi distribution from $\mathbb{R}_+$ to $\mathbb{R}$ via
\[
F_\eta^*(t):=\begin{cases}F_\eta(t) &\quad t\geq 0,\\-F_\eta (-t)&\quad t<0.\end{cases}
\]
Since for $t>0$, the derivative of $F_\eta$ equals the  probability density function for the chi distribution, which for arbitrary degrees of freedom is a continuous function, it follows readily by construction that 
$F_\eta^*$ is continuously differentiable. Moreover, $F'_\eta (t)\geq 0$ for all $t\geq 0$ because $F_\eta$ is a distribution function. This implies that $(F_\eta^*)'(t)\geq 0$ for all $t\in\mathbb{R}$. We now have that
\[
-e(u,v)=-F_\eta(\rho(u,v))=F_\eta^*(-\rho(u,v))\quad\forall u\in\mathcal{N}(\bar{u})\,\,\forall v\in\mathbb{S}^{m-1}.
\]
Let us fix an arbitrary $v\in\mathbb{S}^{m-1}$. As we have shown that $-\rho (\cdot ,v)\colon \mathcal{N}(\bar{u})\to\mathbb{R}$ is convex, the same function is Clarke regular \cite[Proposition 2.3.6]{clarke1983}. Therefore, $-e(u,v)$ is the composition of a continuously differentiable, hence Clarke regular (see \cite[Proposition 2.3.6]{clarke1983}) function having non-negative derivative everywhere with another Clarke regular function and is therefore itself Clarke regular \cite[Theorem 2.3.9 (i)]{clarke1983}.
\end{proof}

\begin{theorem}\label{exactformula}
Under the assumptions of Lemma \ref{clarkereg}, one has the exact formula  
\[
\partial^C\varphi (\bar{u})=\int\limits_{\mathbb{S}^{m-1}}\partial^C_ue(\bar{u},v) \D \mu_\zeta (v).
\]  
\end{theorem}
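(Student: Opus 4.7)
The plan is to combine the Clarke regularity of $-e(\cdot,v)$ established in Lemma~\ref{clarkereg} with the sharp version of Clarke's subdifferentiation-under-the-integral-sign theorem \cite[Theorem~2.7.2]{clarke1983}, which upgrades the generic upper inclusion to an equality whenever the integrand is Clarke regular. This is precisely the maneuver hinted at in the paragraph preceding Lemma~\ref{clarkereg}, where the authors invoke \cite[Remark~2.3.5]{clarke1983} on functions that are negatives of Clarke regular ones.

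First I would introduce the auxiliary function
\[
\psi (u):=-\varphi (u)=\int\limits_{\mathbb{S}^{m-1}}\bigl[-e(u,v)\bigr]\,\D \mu_\zeta (v),\qquad u\in\mathcal{N}(\bar{u}).
\]
By Theorem~\ref{probderiv}, the family $\{e(\cdot,v)\}_{v\in\mathbb{S}^{m-1}}$ is uniformly locally Lipschitzian around $\bar{u}$ with a Lipschitz constant independent of $v$; hence the same holds for $\{-e(\cdot,v)\}$ and $\psi$ is itself locally Lipschitzian around $\bar{u}$. By Lemma~\ref{clarkereg}, each $-e(\cdot,v)$ is moreover Clarke regular on $\mathcal{N}(\bar{u})$.

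Next I would apply Clarke's theorem on subdifferentiation of integral functionals to $\psi$. The measurability and uniform Lipschitz hypotheses required there are identical to those already verified in the proof of Theorem~\ref{probderiv}, so no further work is needed to set them up. Because the integrand $-e(\cdot,v)$ is Clarke regular for every $v$, the conclusion of \cite[Theorem~2.7.2]{clarke1983} is an \emph{equality} rather than a mere inclusion:
\[
\partial^C\psi (\bar{u})=\int\limits_{\mathbb{S}^{m-1}}\partial^C_u\bigl[-e(\bar{u},v)\bigr]\,\D \mu_\zeta (v).
\]
I would then conclude by invoking the symmetry $\partial^C(-f)(x)=-\partial^C f(x)$ of the Clarke generalized gradient (immediate from the definition, see \cite[Proposition~2.3.1]{clarke1983}) together with the corresponding sign-flip identity for the Aumann integral, which together yield
\[
\partial^C\varphi (\bar{u})=-\partial^C\psi (\bar{u})=\int\limits_{\mathbb{S}^{m-1}}\partial^C_ue(\bar{u},v)\,\D \mu_\zeta (v).
\]

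The anticipated obstacle is more cosmetic than substantial: one has to match the hypotheses of the integration theorem in \cite{clarke1983} with those already deployed to derive the inclusion~\eqref{clarkeinclu} in Theorem~\ref{probderiv}, and then verify that passing from $e(\cdot,v)$ to $-e(\cdot,v)$ preserves all measurability and integrability properties while gaining Clarke regularity from Lemma~\ref{clarkereg}. Once this bookkeeping is done, the only genuinely new ingredient beyond Theorem~\ref{probderiv} is the regularity provided by Lemma~\ref{clarkereg}, and the reverse inclusion being already contained in Theorem~\ref{probderiv} completes the argument.
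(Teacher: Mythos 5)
Your proposal is correct and follows essentially the same route as the paper: pass to $-\varphi=\int(-e(\cdot,v))\,\mathrm{d}\mu_\zeta(v)$, invoke the uniform Lipschitz bounds from Theorem~\ref{probderiv} and the measurability of $-e(u,\cdot)$ to apply Clarke's integral-functional theorem, upgrade the inclusion to equality via the Clarke regularity from Lemma~\ref{clarkereg}, and flip signs using $\partial^C(-f)=-\partial^C f$. The only detail the paper makes explicit that you leave implicit is that measurability of $v\mapsto -e(u,v)$ comes from the continuity of $e$ on $\mathcal{N}(\bar{u})\times\mathbb{S}^{m-1}$.
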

\begin{proof}
Under the assumptions we made, the function $e$ is continuous on $\mathcal{N}(\bar{u})\times\mathbb{S}^{m-1}$ \cite[Proposition 1]{Hantoute2019}. In particular, $-e(u,\cdot )\colon \mathbb{S}^{m-1}\to\mathbb{R}$ is measurable for each $u\in\mathcal{N}(\bar{u})$. Moreover, according to Theorem \ref{probderiv}, the functions $e(\cdot ,v)$ and, hence, $-e(\cdot ,v)$ with $v\in\mathbb{S}^{m-1}$, are uniformly locally Lipschitzian around $\bar{u}$ with a Lipschitz constant independent of $v$. Since constants are integrable with respect to the uniform measure on the sphere, all assumptions of Theorem 2.72 in~\cite{clarke1983} are satisfied for the relation 
\[
-\varphi (u)=\int\limits_{\mathbb{S}^{m-1}}-e(u,v) \D \mu_\zeta (v)\quad (u\in U),
\]
which is equivalent to \eqref{srd}. Accordingly, the mentioned theorem ensures the inclusion
\[
\partial^C(-\varphi ) (\bar{u})\subseteq\int\limits_{\mathbb{S}^{m-1}}\partial^C_u(-e)(\bar{u},v) \D \mu_\zeta (v).
\]
By the Clarke regularity of the functions $-e(\cdot ,v)$ on $\mathcal{N}$ for each $v\in\mathbb{S}^{m-1}$ (shown in Lemma \ref{clarkereg}), the same theorem even guarantees that this inclusion reduces to an equality. Then, however, we may exploit the linearity of Clarke's subdifferential in order to show that
\[
\partial^C\varphi (\bar{u})=-\partial^C(-\varphi) (\bar{u})=-\int\limits_{\mathbb{S}^{m-1}}\partial^C_u(-e)(\bar{u},v) \D \mu_\zeta (v)=\int\limits_{\mathbb{S}^{m-1}}\partial^C_ue(\bar{u},v) \D \mu_\zeta (v).
\]
\end{proof}
Apart from Lipschitz continuity or even differentiability of the constraint, it would be interesting to know what convexity properties are inherent to problem \eqref{optprob}. Given the assumed convexity of the objective, it is tempting to verify the concavity of $\varphi$ in order to arrive at a fully convex problem. Unfortunately, being bounded by zero and one, probability functions are hardly ever concave (they would have to be constant then which is extremely untypical). Nonetheless, some generalized concavity, so-called log-concavity, may hold true, which allows one to reformulate the problem as a convex one. Indeed, one may derive (e.g., \cite[Proposition 4]{Farshbaf-Shaker2018}) from well-known results by Prékopa (see \cite[Theorem 10.2.1]{prekopa1995}), the following statement:

If $g$ is quasi-convex (jointly) in both variables and if the density of $\xi$ is log-concave, then the probability function $\varphi$ is log-concave too, i.e., $\log\varphi$ is concave under the convention $\log 0:=\infty$.
\begin{lemma}\label{convproperties}
If $g$ is quasi-convex (jointly) in both variables, then the optimization problem \eqref{optprob} can be reformulated as the convex optimization problem
\begin{equation}\label{convoptprob}
\min_{u \in U} \{F(u)\mid\tilde{\varphi}(u)\leq 0\}\quad (p\in (0,1]),
\end{equation}
where $\tilde{\varphi}\colon U\to\mathbb{R}\cup \{\infty\}$ is an extended-valued convex function defined by $\tilde{\varphi}(u):=-\log\varphi(u)+\log p$ for $u\in U$ under the convention $-\log 0:=\infty$. Moreover, the function $\tilde{\varphi}$ is lower semicontinuous. 
\end{lemma}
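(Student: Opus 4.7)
The plan has three pieces: confirm the constraint equivalence $\varphi(u)\geq p \Leftrightarrow \tilde\varphi(u)\leq 0$, obtain convexity of $\tilde\varphi$ from log-concavity of $\varphi$, and prove lower semicontinuity.

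First, since $p\in(0,1]$, the conventions $\log 0:=-\infty$ and $-\log 0:=\infty$ combined with the strict monotonicity of $\log$ on $(0,1]$ immediately give the equivalence of $\varphi(u)\geq p$ with $\tilde\varphi(u)\leq 0$, so the feasible sets of \eqref{optprob} and \eqref{convoptprob} coincide. Next, the centered Gaussian density on $\mathbb{R}^m$ is log-concave (up to an additive constant, its log equals $-\tfrac{1}{2}z^\top\Sigma^{-1}z$, which is concave), and $g$ is jointly quasi-convex by hypothesis. The Prékopa-type statement recalled immediately before the lemma therefore yields that $\varphi$ is log-concave on $U$, i.e., $\log\varphi$ (extended by $-\infty$ where $\varphi=0$) is concave. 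Hence $\tilde\varphi=-\log\varphi+\log p$ is convex as an $\mathbb{R}\cup\{\infty\}$-valued function, its sublevel set $\{u\mid\tilde\varphi(u)\leq 0\}$ is convex, and combined with the convex objective $F$ this turns \eqref{convoptprob} into a convex optimization problem.

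For lower semicontinuity, the key is to establish upper semicontinuity of $\varphi$; once this is in hand, continuity and monotonicity of $-\log$ (with $-\log 0:=\infty$) yield lsc of $-\log\varphi$ and hence of $\tilde\varphi$. To prove usc of $\varphi$, fix $u_n\to u$ in $U$. By (GA), $g$ is locally Lipschitzian, hence jointly continuous, so $g(u_n,z)\to g(u,z)$ pointwise in $z$. For any $\omega$ with $g(u,\xi(\omega))>0$, continuity forces $g(u_n,\xi(\omega))>0$ for $n$ large, so $\mathbf{1}_{g(u_n,\xi(\omega))\leq 0}=0$; if instead $g(u,\xi(\omega))\leq 0$, the pointwise bound by $1$ is trivial. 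Therefore
\[
\limsup_{n\to\infty}\mathbf{1}_{g(u_n,\xi(\omega))\leq 0}\leq\mathbf{1}_{g(u,\xi(\omega))\leq 0}\quad\text{for every }\omega\in\Omega,
\]
and the reverse Fatou lemma (the indicators are uniformly bounded by $1$) yields $\limsup_n\varphi(u_n)\leq\varphi(u)$.

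The main obstacle is precisely this lsc step. The Slater-type condition \eqref{sp}, which the section uses elsewhere to obtain even continuity of $\varphi$ at a fixed base point, is not available in the present lemma, which is a global regularity statement not anchored at any particular $\bar u$. The reverse-Fatou argument circumvents this by using only the continuity of $g$ built into (GA); no absolute-continuity hypothesis on the distribution of $\xi$ is required for \emph{upper} (as opposed to full) semicontinuity of $\varphi$, and that is exactly what transfers through $-\log$ to deliver the lsc of $\tilde\varphi$.
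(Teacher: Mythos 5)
Your proposal is correct, and two of its three pieces (the equivalence of the feasible sets via the $\log$-conventions and $p>0$, and the convexity of $\tilde{\varphi}$ via Pr\'ekopa's theorem applied to the log-concave Gaussian density) coincide with the paper's proof. Where you genuinely diverge is the lower semicontinuity step. The paper argues that (GA) together with the Slater condition \eqref{sp} yields \emph{strong continuity} of $\varphi$ (citing the earlier discussion in Section~\ref{generalframe}), deduces from this that the upper level sets $\{u\mid\varphi(u)\geq t\}$ are closed, and then transfers this to the sublevel sets of $\tilde{\varphi}$ via the identity $\{u\mid\tilde{\varphi}(u)\leq\tau\}=\{u\mid\varphi(u)\geq pe^{-\tau}\}$ --- exactly the monotone-transfer you also use. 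You instead prove \emph{upper} semicontinuity of $\varphi$ directly: joint continuity of $g$ from (GA) gives the pointwise bound $\limsup_n\mathbf{1}_{g(u_n,\xi(\omega))\leq 0}\leq\mathbf{1}_{g(u,\xi(\omega))\leq 0}$, and the reverse Fatou lemma (with the constant dominating function $1$ on a probability space) gives $\limsup_n\varphi(u_n)\leq\varphi(u)$. This buys you something real: closedness of the upper level sets only requires usc of $\varphi$, not continuity, so your argument is self-contained and does not lean on the pointwise Slater condition \eqref{sp} --- which, as you rightly observe, is anchored at a fixed $\bar{u}$ and is not formally among the hypotheses of this global lemma. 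The paper's route is shorter because it reuses machinery already quoted in the section, but your version is the more careful one for a statement asserting lsc of $\tilde{\varphi}$ on all of $U$.
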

\begin{proof}
Our conventions on the $\log$ along with the assumption that $p>0$ yield the equivalence
\[
\varphi (u)\geq p\Longleftrightarrow -\log\varphi (u)+\log p\leq 0\quad\forall u\in U.
\]
Consequently, \eqref{optprob} and \eqref{convoptprob} are equivalent optimization problems. Since the density of our Gaussian random vector $\xi$ is log-concave, the statement preceding this lemma yields that $\tilde{\varphi}$ is (possibly extended-valued) convex and, hence, \eqref{convoptprob} is a convex optimization problem.
As mentioned above, { Assumption~\ref{ass:general}} along with \eqref{sp} guarantee that 
the probability function $\varphi$ is strongly continuous. As a consequence, for each $t\in\mathbb{R}$, the upper level sets
\[
\{u\in U\mid\varphi (u)\geq t\}
\]
are (strongly) closed in $U$. Now, let $\tau\in\mathbb{R}$ be arbitrarily given. Then, the identity (again thanks to $p>0$)
\[
\{u\in U\mid\tilde{\varphi} (u)\leq\tau\}=\{u\in U\mid\varphi (u)\geq pe^{-\tau}\}
\]
yields the closedness of the left-hand side set thanks to the closedness of the right-hand side upper level set (with respect to the level $t:=pe^{-\tau}$). Since $\tau$ was arbitrary, we have shown that all lower level sets of $\tilde{\varphi}$ are (strongly) closed, which is the same as saying that $\tilde{\varphi}$ is (strongly) lower semicontinuous on $U$.
\end{proof}

\noindent
Now, we are in a position to formulate necessary and sufficient optimality conditions for the originally nonconvex problem \eqref{optprob}. 

\begin{proposition}\label{optcond}
We strengthen { Assumption~\ref{ass:general}} by assuming that the constraint function $g$ is jointly convex in \underline{both} variables. Let there exist some $\hat{u}\in U$ with $\varphi (\hat{u})>p$ (generalized Slater point). Consider some $u^*\in U$ that is feasible in \eqref{optprob}. { For $u^*$, we assume that \eqref{sp} is satisfied and \eqref{growth} is satisfied or the set $M$ in \eqref{bounded-realizations} is bounded.} Then, for $u^*$ to be a solution to \eqref{optprob}, it is \underline{necessary and sufficient} that there exists some $\lambda\geq 0$ with
\begin{equation}\label{kkt}
D F(u^*)\in\lambda\partial^C\varphi (u^*),\quad { \varphi(u^*)\geq p,} \quad\lambda (\varphi(u^*)-p)=0, 
\end{equation}
{ Moreover,} for $u^*$ to be a solution to \eqref{optprob}, it is \underline{necessary} that there exists some $\lambda\geq 0$ such that
\begin{equation}\label{kkt2}
D F(u^*)\in\lambda\int\limits_{\mathbb{S}^{m-1}}\partial^C_ue(u^*,v) \D \mu_\zeta (v),\quad { \varphi(u^*)\geq p,}  \quad\lambda (\varphi(u^*)-p)=0,
\end{equation}
where $e$ refers to the radial probability function introduced in Theorem \ref{probderiv}. { If, moreover, \eqref{measurezero2} is satisfied or the set $M$ in \eqref{bounded-realizations} is bounded,} then \eqref{kkt2} is also \underline{sufficient} for $u^*$ to be a solution to \eqref{optprob}. 
\end{proposition}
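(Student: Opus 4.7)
The plan is to exploit the convex reformulation from Lemma \ref{convproperties} and apply classical convex Karush--Kuhn--Tucker theory, then translate the resulting condition back in terms of $\varphi$ via a Clarke chain rule, and finally invoke Theorem \ref{probderiv} (and Theorem \ref{exactformula}) to arrive at the spherical-integral form \eqref{kkt2}.

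First, since joint convexity of $g$ entails joint quasi-convexity, Lemma \ref{convproperties} recasts \eqref{optprob} as the convex optimization problem \eqref{convoptprob} with $\tilde{\varphi}$ proper, convex and lower semicontinuous. The generalized Slater point $\hat{u}$ with $\varphi(\hat{u})>p$ translates to $\tilde{\varphi}(\hat{u})<0$, which is the standard Slater condition for \eqref{convoptprob}. Together with convexity and Fréchet differentiability of $F$, classical convex duality in Banach spaces (see e.g.\ \cite{peypouquet2015convex}) yields that $u^*$ solves \eqref{convoptprob} if and only if there exists $\tilde{\lambda}\geq 0$ such that
\[
0\in DF(u^*)+\tilde{\lambda}\,\partial\tilde{\varphi}(u^*),\qquad \tilde{\lambda}\,\tilde{\varphi}(u^*)=0.
\]

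Second, I compute $\partial\tilde{\varphi}(u^*)$ by a Clarke chain rule. By Theorem \ref{probderiv}, $\varphi$ is locally Lipschitzian around $u^*$, and $\varphi(u^*)\geq p>0$, so $-\log$ is strictly differentiable at $\varphi(u^*)$ with derivative $-1/\varphi(u^*)$. Clarke's chain rule \cite[Theorem 2.3.9]{clarke1983} then yields the equality
\[
\partial^C\tilde{\varphi}(u^*)=\partial^C(-\log\varphi)(u^*)=-\frac{1}{\varphi(u^*)}\,\partial^C\varphi(u^*).
\]
Since $\tilde{\varphi}$ is convex, its convex and Clarke subdifferentials coincide, so $\partial\tilde{\varphi}(u^*)$ equals the right-hand side above. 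Setting $\lambda:=\tilde{\lambda}/\varphi(u^*)\geq 0$, the convex optimality condition turns into $DF(u^*)\in\lambda\,\partial^C\varphi(u^*)$; the complementarity $\lambda(\varphi(u^*)-p)=0$ follows from $\tilde{\lambda}\,\tilde{\varphi}(u^*)=0$, since $\tilde{\varphi}(u^*)=0$ is equivalent to $\varphi(u^*)=p$ and the scaling by $\varphi(u^*)>0$ preserves the complementarity. This proves the first assertion \eqref{kkt} in both directions.

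Third, for \eqref{kkt2}, I apply Theorem \ref{probderiv} under \eqref{sp} and \eqref{growth} (or under boundedness of $M$) to obtain the inclusion $\partial^C\varphi(u^*)\subseteq\int_{\mathbb{S}^{m-1}}\partial^C_u e(u^*,v)\D\mu_\zeta(v)$, which immediately converts the necessary condition \eqref{kkt} into the necessary condition \eqref{kkt2}. For the sufficient direction, the additional hypothesis \eqref{measurezero2} (Theorem \ref{probderiv}) or the assumptions of Lemma \ref{clarkereg} (via Theorem \ref{exactformula}) upgrade this inclusion to an equality, so any pair $(u^*,\lambda)$ satisfying \eqref{kkt2} automatically satisfies \eqref{kkt}, which is sufficient for optimality in the convex reformulation and hence in \eqref{optprob}.

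The main obstacle is the careful bookkeeping in the chain-rule step: one must justify that the three distinct subdifferentials involved---the convex subdifferential of $\tilde{\varphi}$, the Clarke subdifferential of the generally non-convex $\varphi$, and the classical derivative of $-\log$ at the positive value $\varphi(u^*)$---interact through Clarke's chain rule with \emph{equality} rather than mere inclusion, so that the positive scalar $1/\varphi(u^*)$ can be absorbed into the multiplier without losing either the necessary or the sufficient direction. Everything else reduces to applying results already available in the paper.
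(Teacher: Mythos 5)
Your proposal is correct and follows essentially the same route as the paper's own proof: reformulation via Lemma \ref{convproperties}, classical convex KKT theory under the Slater condition, the Clarke chain rule with equality (using strict differentiability of $-\log$ at the positive value $\varphi(u^*)$ and coincidence of the convex and Clarke subdifferentials of $\tilde{\varphi}$) to absorb the factor $1/\varphi(u^*)$ into the multiplier, and finally Theorems \ref{probderiv} and \ref{exactformula} for the passage to \eqref{kkt2}. No gaps.
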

\begin{proof}
The point $u^*$ being a solution to \eqref{optprob} is equivalent to it being a solution to the convex problem \eqref{convoptprob}. Due to $\tilde{\varphi}(\hat{u})=-\log\varphi(\hat{u})+\log p<0$, $\hat{u}$ is a Slater point for \eqref{convoptprob}. 
Hence, by Lemma \ref{convproperties}, $\tilde{\varphi}$ is a lower semicontinuous, convex and proper function. Now,
it follows from classical convex analysis (admitting extended-valued inequality constraints, e.g., \cite[Theorem 3.66, Remark 3.67]{peypouquet2015convex}) that $u^*$ being a solution to \eqref{convoptprob} (and \eqref{optprob}) is equivalent to the existence of some $\lambda\geq 0$ such that $-D F(u^*)\in\lambda\partial\tilde{\varphi} (u^*)$ and $\lambda\tilde{\varphi}(u^*)=0$,
where ``$\partial$'' refers to the subdifferential in the sense of convex analysis. Now, the feasibility of $u^*$ implies that $\varphi(u^*)\geq p>0$. With $\varphi$ being locally Lipschitzian around $u^*$ (by Theorem \ref{probderiv})) and $\log$ being locally Lipschitzian around the positive number $\varphi(u^*)$, the function  $\tilde{\varphi}$ is locally Lipschitzian around $u^*$.
Since $\tilde{\varphi}$ is also convex, it follows that
$\partial\tilde\varphi(u^*)=\partial^C\tilde\varphi(u^*)$, see \cite[Proposition 2.2.7]{clarke1983}. On the other hand, the chain rule for Clarke's subdifferential from  \cite[Theorem 2.3.9 (ii)]{clarke1983}
yields that 
\[
\partial\tilde\varphi(u^*)=\partial^C\tilde\varphi(u^*)=-\frac{1}{\varphi(u^*)}\partial^C\varphi(u^*).
\]
Owing to $\varphi(u^*)>0$, we have shown that the existence of some $\lambda\geq 0$ with $-D F(u^*)\in\lambda\partial\tilde{\varphi} (u^*)$ and $\lambda\tilde{\varphi}(u^*)=0$ is equivalent with the existence of some $\tilde{\lambda}\geq 0$ with $D F(u^*)\in\tilde{\lambda}\partial^C\varphi (u^*)$ and $\tilde{\lambda}(\varphi(u^*)-p)=0$. Altogether, this yields the first statement of this Proposition. The remaining statements on necessity and sufficiency of \eqref{kkt2} are immediate consequences of \eqref{kkt} and of Theorems \ref{probderiv} and \ref{exactformula}, respectively.
\end{proof}
\begin{remark}
\label{rem:case-p-equals-1}
The existence of some $\hat{u}$ with $\varphi(\hat{u})>p$ as required in Proposition \ref{optcond} implicitly necessitates that $p<1$, which means that the ``almost sure'' case is excluded. Without this requirement, the lack of a Slater point in the convex problem \eqref{convoptprob} would not allow the derivation of optimality conditions and simple examples show that they do not apply then, indeed. The case $p=1$ will be addressed in Section \ref{sec:almost-sure} in an independent manner.
\end{remark}
{
\begin{remark}\label{without-joint-convexity}
The assumption of joint convexity of $g$ in Proposition 
\ref{optcond} appears to be restrictive compared with our general assumption of convexity just in the second argument (Assumption \ref{ass:general}) as it rules out, for instance, bilinear couplings of control and randomness. On the other hand, joint convexity holds true, if $g$ happens to be linear as in our concrete PDE-constrained
optimization problem considered in the following sections. The weaker assumption of convexity with respect to the second argument, only, would allow to carry out the same analysis except that the exact formula for the Clarke subdifferential from Theorem \ref{exactformula} would get lost and, as a consequence, the sufficiency of optimality condition \eqref{kkt2} would hold true only under the special differentiability condition \eqref{measurezero2}.
\end{remark}

\noindent
The formulation of necessary and possibly even sufficient optimality conditions in terms of the radial probability function as in \eqref{kkt2} paves the way for deriving fully explicit optimality conditions in terms of the original problem data; this is possible to do if the partial subdifferential $\partial^C_ue$ can be computed. We will demonstrate how to do this in the context of a PDE-constrained optimization problem subject to uniform random state constraints in the following subsections. The application of \eqref{kkt2}, however, is not restricted to this PDE setting and could be of equal use in many other operations research problems. One example is reservoir management under uncertain inflow and controlled release subject to level constraints uniformly in time. These reservoirs might refer to hydro or gas reservoirs in energy management, to battery storage in the dispatch of mini-grids, or to pension funds in finance. Reservoir problems subject to probabilistic constraints have been considered in many papers, mostly from the numerical context without reference to optimality conditions and with finite rather than infinite random inequality systems involved. In \cite{Berthold2022}, however, a water reservoir problem has been investigated with level constraints uniformly on a time interval (corresponding to uniform state constraints on a domain in our PDE context). This setting would perfectly fit to our general framework studied above. Then, computing $\partial^C_ue$ as we will now do in the PDE-constrained context would lead to analogous optimality conditions.}
\subsection{Probabilistic state constraints in the concrete PDE}
\label{subsec:prob-PDE}
In this section, we are going to apply the general results obtained before to the specific optimization problem
\begin{subequations}
    \begin{alignat}{3}
    \min_{u \in L^2(D)}\, F(u) & & &&   \label{eq:probuniform-problem-a}\\
    \text{s.t.}  \quad -\Delta  y(x,\omega)  &=  u(x) + f(x,\xi (\omega)), & &&\quad x \in D \quad \text{ $\mathbb{P}$-a.s.},  \label{eq:probuniform-problem-b} \\
 y(x,\omega) &=0, & &&\quad x \in \partial D  \quad \text{$\mathbb{P}$-a.s.},  \label{eq:probuniform-problem-c}\\
 \mathbb{P}(y(x,\omega)&\leq \alpha\quad\forall x\in D)\geq p. & &&  \label{eq:probuniform-problem-d}
    \end{alignat}
    \label{eq:probuniform-problem}
\end{subequations}

\noindent
We recall the setting introduced in Section~\ref{subsec:PDE} under Assumption~\ref{ass:PDE-standing}. This setting provides the continuity of the solution operator and the measurability of the functions $\omega\mapsto y(x,\omega )$ for each $x\in\bar{D}$ via Lemma \ref{measstat}. This justifies relations \eqref{eq:probuniform-problem-b} and \eqref{eq:probuniform-problem-c}. As for \eqref{eq:probuniform-problem-d}, a corresponding measurability statement will be made in Lemma \ref{baseprop} below.
Here, $U = L^2(D)$, { meaning Assumption~\ref{subasu:U} is satisfied}; { $F$ and $\xi$ fulfill Assumptions~\ref{subasu:F}--\ref{subasu:xi}}, 
$p\in (0,1]$ is some given probability level, and $\alpha\in\mathbb{R}$ is some upper threshold for the random state $y(\cdot,\omega)$. Note that $\mathbb{E}f(\cdot,\xi )=f_0$ on account of $\xi$ being centered (see \eqref{eq:f}). 

Recall the operators defined in \eqref{affinedecomp}, allowing us to reformulate problem \eqref{eq:probuniform-problem} as
\begin{equation}\label{reduced_problem}
\min_{u \in L^2(D)}\, F(u)\quad\mbox{s.t.}\quad\varphi (u)\geq p,
\end{equation}
where
\[
\varphi (u):=\mathbb{P}(\omega\mid g(u,\xi(\omega))\leq 0)\quad (u\in L^2(D))\quad\mbox{and}\quad g(u,z):=\sup\limits_{x\in D}\,\, [S(u,z)](x)-\alpha\quad ((u,z)\in L^2(D)\times\mathbb{R}^m).
\]
Problem \eqref{reduced_problem} falls into the setting of the general problem \eqref{optprob} introduced before. 
In order to apply the results from Section \ref{generalframe} to problem \eqref{reduced_problem}, one has to verify first that the assumptions made there are satisfied. 
Since by Lemma~\ref{lem:cont-dep-sol}, $S(u,z)\in H^1_0(D)\cap\mathcal{C}(\bar{D})$ for all $(u,z)\in L^2(D)\times\mathbb{R}^m$, 
we may write $g$ as a maximum over $\bar{D}$ rather than a supremum over $D$:
\[
g(u,z)=\max\limits_{x\in\bar{D}}\,\, [S(u,z)](x)-\alpha\leq 0\quad ((u,z)\in L^2(D)\times\mathbb{R}^m).
\]
Next, we verify that $g$ satisfies { Assumption~\ref{subasu:g}} of Section \ref{generalframe}:
\begin{lemma}\label{baseprop}
The function $g\colon L^2(D)\times\mathbb{R}^m\to\mathbb{R}$ is convex and globally Lipschitzian. As a consequence, the coinciding events occurring in the probabilities of \eqref{eq:probuniform-problem-d} and in the definition of $\varphi$ above are measurable.
\end{lemma}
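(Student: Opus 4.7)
The plan is to derive both convexity and global Lipschitz continuity directly from the affine decomposition $S(u,z) = P(u,z) + y_0$ established in \eqref{affinedecomp}, where $P$ is a continuous linear operator from $L^2(D)\times\mathbb{R}^m$ into $\mathcal{C}(\bar{D})$ satisfying \eqref{pest}. For convexity, I would first observe that for each fixed $x\in\bar{D}$ the evaluation map $(u,z)\mapsto [S(u,z)](x)$ is affine because it is the composition of the affine operator $S$ with the continuous linear point-evaluation functional on $\mathcal{C}(\bar{D})$. Taking the pointwise maximum over $x\in\bar{D}$ of a family of affine (hence convex) functions yields a convex function, and subtracting the constant $\alpha$ preserves convexity. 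This gives convexity of $g$ in $(u,z)$ jointly.

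For Lipschitz continuity, the key estimate is the elementary inequality $|\max_{x} a(x) - \max_{x} b(x)| \leq \|a-b\|_{\mathcal{C}(\bar{D})}$ applied to $a=S(u_1,z_1)$, $b=S(u_2,z_2)$. This yields
\[
|g(u_1,z_1)-g(u_2,z_2)| \leq \|S(u_1,z_1)-S(u_2,z_2)\|_{\mathcal{C}(\bar{D})}.
\]
By linearity of $P$, the difference $S(u_1,z_1)-S(u_2,z_2)$ equals $P(u_1-u_2,z_1-z_2)$, and estimate \eqref{pest} then provides
\[
|g(u_1,z_1)-g(u_2,z_2)| \leq C\bigl(\|z_1-z_2\|_2+\|u_1-u_2\|_{L^2(D)}\bigr),
\]
which is Lipschitz continuity with respect to the natural product norm on $L^2(D)\times\mathbb{R}^m$.

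For the measurability assertion, I would first note that the two events in the statement coincide: since $S(u,\xi(\omega))\in\mathcal{C}(\bar{D})$ by Lemma~\ref{lem:cont-dep-sol}, the supremum of $y(\cdot,\omega)$ over $D$ agrees with its maximum over $\bar{D}$, so
\[
\{\omega\mid y(x,\omega)\leq\alpha\ \forall x\in D\}=\{\omega\mid g(u,\xi(\omega))\leq 0\}.
\]
Since $g(u,\cdot)$ is continuous (indeed Lipschitz by what was just shown) and $\xi$ is measurable as a random vector, the composition $\omega\mapsto g(u,\xi(\omega))$ is measurable, and the preimage of $(-\infty,0]$ under this map lies in $\mathcal{F}$.

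I do not anticipate a real obstacle here: the only subtlety is ensuring that the affine structure of $S$ in the joint variable $(u,z)$ is used (not just separate continuity in each argument), but this is immediate from \eqref{affinedecomp} together with the linearity of $P$.
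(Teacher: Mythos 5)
Your proposal is correct and follows essentially the same route as the paper: convexity via $g$ being a maximum of affine functions of $(u,z)$, and Lipschitz continuity via the linearity of $P$ together with the bound \eqref{pest} (the paper spells out the elementary inequality $|\max a-\max b|\leq\norm{a-b}{\mathcal{C}(\bar{D})}$ by picking maximizers $x_1^*,x_2^*$, but the content is identical). The measurability conclusion from continuity of $g(u,\cdot)$ composed with the measurable $\xi$ is likewise the paper's argument.
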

\begin{proof}
The convexity property follows directly from the fact that $g$ is the maximum of affine linear functions 
\[(u,z)\mapsto [S(u,z)](x)=[P(u,z)](x)+y_0(x)-\alpha\quad  (x\in\bar{D}). 
\]
As for Lipschitz continuity, let $(u_1,z_1),(u_2,z_2)\in L^2(D)\times\mathbb{R}^m$ be arbitrary. Then,
\[
|g(u_1,z_1)-g(u_2,z_2)|=\Big\vert\max\limits_{x\in\bar{D}}\,[S(u_1,z_1)](x)-\max\limits_{x\in\bar{D}}\,[S(u_2,z_2)](x)\Big\vert=\vert [S(u_1,z_1)](x_1^*)-[S(u_2,z_2)](x_2^*)\vert,
\]
where $x_1^*,x_2^*\in\bar{D}$ are arguments realizing the respective maxima. Assuming w.l.o.g.~$S(u_1,z_1)](x_1^*)\geq S(u_2,z_2)](x_2^*)$, we may exploit the linearity of $P$ and refer to \eqref{pest} in order to derive that
\begin{eqnarray*}
|g(u_1,z_1)-g(u_2,z_2)|&\leq& [S(u_1,z_1)](x_1^*)-[S(u_2,z_2)](x_1^*)=[P(u_1,z_1)](x_1^*)-[P(u_2,z_2)](x_1^*)\\&=&[P(u_1-u_2,z_1-z_2)](x_1^*)\leq\norm{P(u_1-u_2,z_1-z_2)}{\mathcal{C}(\bar{D})}\\&\leq& C(\|u_1-u_2\|_{L^2(D)}+\|z_1-z_2\|_2).
\end{eqnarray*}
This proves the (global) Lipschitz continuity of $g$ and in particular the remaining measurability statement.
\end{proof}

\noindent
The application of Theorem \ref{probderiv} requires, in particular, that the growth condition \eqref{growth} is satisfied or, alternatively, that the set $M$ in \eqref{bounded-realizations} is bounded. The latter condition may not hold in general as can be seen from Examples \ref{illustex} and \ref{illustex2} below, where 
the radius function $\rho$ from \eqref{rhodef} becomes infinite for certain directions. This observation urges us to verify the general validity of the growth condition \eqref{growth} before applying Theorem \ref{probderiv}.
\begin{lemma}\label{growthcheck}
Let $\bar{u}\in L^2(D)$ be arbitrary. Then, $g$ satisfies the condition of moderate growth \eqref{growth}.
\end{lemma}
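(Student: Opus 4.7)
The plan is to exploit the fact that, by Lemma \ref{baseprop}, the constraint function $g$ is globally Lipschitzian on $L^2(D)\times\mathbb{R}^m$ with some constant $C>0$. In particular, the partial function $g(\cdot,z)$ is globally Lipschitzian with the same constant $C$ independently of $z\in\mathbb{R}^m$. Since the Clarke directional derivative of a Lipschitzian function is dominated by the Lipschitz constant times the norm of the direction, this immediately gives the uniform bound
\[
g^{\circ}(\cdot ,z)(u;h)\leq C\|h\|_{L^2(D)}\qquad\forall u,h\in L^2(D),\,\forall z\in\mathbb{R}^m.
\]
Note that no local restriction on $u$ is needed because the Lipschitz estimate is global; in particular, the condition $u\in\mathbb{B}_{1/l}(\bar{u})$ in \eqref{growth} will be automatically accommodated.

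Writing $\sigma:=\|\Sigma^{1/2}\|$, it then suffices to exhibit some $l>0$ such that
\[
C\leq l\,\|z\|_2^{-m}\exp\!\left(\frac{\|z\|_2^2}{2\sigma^2}\right)\qquad\forall z:\|z\|_2\geq l.
\]
To this end, I would analyze the auxiliary function $\psi(t):=t^{-m}\exp(t^2/(2\sigma^2))$ on $(0,\infty)$. A short computation of $\psi'$ shows that $\psi$ is strictly increasing on $[\sigma\sqrt{m},\infty)$. Hence, for any $l\geq\sigma\sqrt{m}$, the map $t\mapsto l\,\psi(t)$ attains its infimum on $[l,\infty)$ at $t=l$, with value $l^{1-m}\exp(l^2/(2\sigma^2))$.

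Finally, I would fix $l$ large enough so that simultaneously $l\geq\sigma\sqrt{m}$ and $l^{1-m}\exp(l^2/(2\sigma^2))\geq C$; both requirements can be met because the exponential term dominates any fixed polynomial as $l\to\infty$. With this choice, for every $z$ with $\|z\|_2\geq l$ one has
\[
C\leq l^{1-m}\exp\!\left(\frac{l^2}{2\sigma^2}\right)\leq l\,\|z\|_2^{-m}\exp\!\left(\frac{\|z\|_2^2}{2\sigma^2}\right),
\]
and combining this with the uniform Clarke-derivative bound delivers precisely \eqref{growth}. The argument is essentially a one-line observation once the global Lipschitz continuity from Lemma \ref{baseprop} is available, so I do not anticipate any serious obstacle; the only point needing care is picking $l$ simultaneously above $\sigma\sqrt{m}$ and above the threshold at which $\psi$ exceeds $C/l$, which is handled by the asymptotics of $\psi$.
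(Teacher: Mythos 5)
Your proposal is correct and follows essentially the same route as the paper: both derive the uniform bound $g^{\circ}(\cdot,z)(u;h)\leq C\lVert h\rVert_{L^2(D)}$ from the global Lipschitz constant of Lemma \ref{baseprop} and then choose $l$ large enough that $l\,\lVert z\rVert_2^{-m}\exp\bigl(\lVert z\rVert_2^2/(2\lVert\Sigma^{1/2}\rVert^2)\bigr)\geq C$ for $\lVert z\rVert_2\geq l$. Your explicit monotonicity analysis of the auxiliary function on $[\sigma\sqrt{m},\infty)$ is just a more detailed justification of the paper's remark that this expression tends to infinity.
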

\begin{proof}
By definition of Clarke's directional derivative, we have that
\[
g^{\circ}(\cdot ,z)(u;h)=\limsup\limits_{t\downarrow 0,u'\to u}\frac{g(u'+th,z)-g(u',z)}{t}\quad\forall u,h\in L^2(D)\,\forall z\in\mathbb{R}^m.
\]
Exploiting Lemma \ref{baseprop} with the Lipschitz constant $C$ derived there, we may continue as
\begin{equation}\label{dirderiv}
g^{\circ}(\cdot ,z)(u;h)\leq C\norm{h}{L^2(D)}\quad\forall u,h\in L^2(D)\,\forall z\in\mathbb{R}^m.
\end{equation}
Define $L>0$ such that
\[
\left\Vert z\right\Vert_2^{-m}\exp\left(\frac{\left\Vert z\right\Vert_2^2}{2\left\Vert \Sigma^{1/2}\right\Vert^2}\right)\geq C\quad\forall z\in\mathbb{R}^m:\|z\|_2\geq L
 \]
(note that the left-hand side expression tends to infinity if $\|z\|$ does so). Then, \eqref{growth} is satisfied with $l:=\max\{1,L\}$.
\end{proof}
\begin{corollary}\label{cor-theo}
Fix some point of interest $\bar{u}\in L^2(D)$. Denote by $\bar{y}:=S(\bar{u},0)$ the unique solution of the PDE \eqref{eq:PDE} associated with $\bar{u}$ and $z:=0$ (expectation of $\xi$). If for this expected state, it holds that
\begin{equation}\label{sp1}
\bar{y}(x)<\alpha\quad\forall x\in\bar{D},
\end{equation}
then the conclusions of Theorem \ref{probderiv} hold true.
\end{corollary}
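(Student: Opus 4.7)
The plan is to verify the three hypotheses of Theorem \ref{probderiv}, namely the general assumption (GA), the Slater condition \eqref{sp}, and the moderate growth condition \eqref{growth}, all at the point $\bar{u}$. Once these are in place, the conclusions of Theorem \ref{probderiv} transfer verbatim.

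First, (GA) has already been checked globally: by Lemma \ref{baseprop}, the function $g$ is jointly convex in $(u,z)$ (hence convex in $z$ for every fixed $u$) and globally Lipschitzian on $L^2(D)\times\mathbb{R}^m$, which in particular yields local Lipschitz continuity. So (GA) is immediate and holds in particular at $\bar{u}$.

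Second, I would verify the Slater condition \eqref{sp} at $\bar{u}$. By the definition of $g$ and the identity $\bar{y}=S(\bar{u},0)$, one has
\[
g(\bar{u},0)=\max_{x\in\bar{D}}\,[S(\bar{u},0)](x)-\alpha=\max_{x\in\bar{D}}\,\bar{y}(x)-\alpha.
\]
Lemma \ref{lem:cont-dep-sol} ensures $\bar{y}\in\mathcal{C}(\bar{D})$, so the maximum of the continuous function $\bar{y}$ over the compact set $\bar{D}$ is attained at some $x^\ast\in\bar{D}$. The hypothesis \eqref{sp1} then gives $\bar{y}(x^\ast)<\alpha$ and consequently $g(\bar{u},0)<0$, which is exactly \eqref{sp}.

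Third, the moderate growth condition \eqref{growth} was established for arbitrary points of $L^2(D)$ in Lemma \ref{growthcheck}, so it holds in particular at $\bar{u}$. With (GA), \eqref{sp}, and \eqref{growth} all verified at $\bar{u}$, Theorem \ref{probderiv} applies and its conclusions are obtained directly. I do not anticipate any substantive obstacle here, since the work has been essentially packaged into the preceding lemmas; the only step that requires any thought is translating the pointwise condition \eqref{sp1} into the strict inequality for the supremum-based $g$, and this follows from the compactness of $\bar{D}$ and the continuity of $\bar{y}$.
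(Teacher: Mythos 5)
Your proposal is correct and follows essentially the same route as the paper's own proof: deduce \eqref{sp} from \eqref{sp1} via the representation $g(\bar{u},0)=\max_{x\in\bar{D}}\bar{y}(x)-\alpha$, and invoke Lemmas \ref{baseprop} and \ref{growthcheck} for (GA) and the growth condition. Your extra remark that the maximum is attained by compactness of $\bar{D}$ and continuity of $\bar{y}$ (so the pointwise strict inequality passes to the maximum) is a detail the paper leaves implicit, but it is exactly the right justification.
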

\begin{proof}
The assumption implies that 
\[
g(\bar{u},0)=\max\limits_{x\in\bar{D}}\,\, [S(\bar{u},0)](x)-\alpha
=\max\limits_{x\in\bar{D}}\,\,\bar{y}(x)-\alpha <0.
\]
This, however, is \eqref{sp}. Since all the remaining assumptions of Theorem \ref{probderiv} are satisfied by Lemmas \ref{baseprop} and \ref{growthcheck}, the result follows.
\end{proof}
\begin{corollary}\label{coroptcond}
In the setting of Corollary \ref{cor-theo} assume in addition to \eqref{sp1} (a Slater condition in $\mathbb{R}^m$) the existence of some  $\hat{u}\in U$ with $\varphi (\hat{u})>p$ (generalized Slater point in $U$). Then, the results of Proposition \ref{optcond} hold true in the concrete optimization problem \eqref{reduced_problem}.
\end{corollary}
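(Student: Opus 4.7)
The plan is to verify, one by one, that the hypotheses of Proposition \ref{optcond} are satisfied by the concrete data of \eqref{reduced_problem} at the point of interest $\bar u$. The heavy lifting has already been done in the preceding lemmas, so the proof should reduce to a bookkeeping argument that translates each abstract assumption into its concrete counterpart.

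First I would invoke Lemma \ref{baseprop} to obtain the joint convexity (and even global Lipschitz continuity) of $g(u,z) = \max_{x \in \bar D}[S(u,z)](x) - \alpha$; this furnishes the strengthened form of (GA) demanded in Proposition \ref{optcond}. The generalized Slater point $\hat u$ with $\varphi(\hat u) > p$ is supplied by the hypothesis of the corollary itself, so no verification is required. Next, the abstract Slater condition \eqref{sp} at $\bar u$ is obtained by reproducing the short argument from Corollary \ref{cor-theo}: using continuity of $\bar y$ on the compact set $\bar D$, the assumption \eqref{sp1} yields $g(\bar u, 0) = \max_{x \in \bar D} \bar y(x) - \alpha < 0$. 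Finally, Lemma \ref{growthcheck} delivers the moderate growth condition \eqref{growth} at $\bar u$ — indeed at any point of $L^2(D)$ — which is crucial here because the set $M$ in \eqref{bounded-realizations} need not be bounded in the PDE setting.

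With all hypotheses in place, the necessary condition \eqref{kkt} follows, and its spherical-integral representation \eqref{kkt2} is then obtained from Theorem \ref{probderiv} applied in the present setting (itself validated by Corollary \ref{cor-theo}). For the sufficiency half, the conditions of Lemma \ref{clarkereg} additionally require boundedness of $M$, which may fail here as noted in the text; however, Proposition \ref{optcond} explicitly allows the alternative hypothesis \eqref{measurezero2}, so sufficiency is inherited whenever that measure-zero condition happens to be in force. I do not anticipate a significant obstacle, since the proof is essentially a rewiring of facts already established; the only mildly delicate point is matching the pointwise Slater condition \eqref{sp1} on $\bar D$ with the scalar Slater condition \eqref{sp}, but this is immediate from compactness of $\bar D$ and continuity of $\bar y$.
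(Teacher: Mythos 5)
Your proposal is correct and follows essentially the same route as the paper: the paper's proof likewise reduces to noting that joint convexity of $g$ comes from Lemma \ref{baseprop}, that the remaining hypotheses of Proposition \ref{optcond} (the Slater condition \eqref{sp} via \eqref{sp1} and the growth condition \eqref{growth} via Lemma \ref{growthcheck}) were already verified in the proof of Corollary \ref{cor-theo}, and that the generalized Slater point is assumed. Your extra remarks on the sufficiency of \eqref{kkt2} under \eqref{measurezero2} are consistent with the conditional form in which Proposition \ref{optcond} states that result.
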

\begin{proof}
Since the function $g$ associated with the probability function $\varphi$ in \eqref{reduced_problem} is convex (in both variables simultaneously) by Lemma \ref{baseprop} and all other needed assumptions are satisfied as already stated in the proof of Corollary \ref{cor-theo}, the assertion follows from Proposition \ref{optcond}. 
\end{proof}

In Section \ref{subsec:subdiff-prob-func-PDE}, we will provide a more explicit representation of the optimality conditions for problem \eqref{reduced_problem}. First, we will work on formulas of the radial probability function.

\subsection{Explicit formulae for the subdifferential of the radial probability function}
The general results of Theorem \ref{probderiv} and Proposition \ref{optcond} on the subdifferential of the (total) probability function $\varphi$ and on optimality conditions for problem \eqref{optprob} are formulated in terms of the radial probability function $e$ and in this sense not very explicit yet. In this section we provide two lemmas that characterize the (partial) Clarke subdifferential of $e$ in terms of the original data of our concrete problem \eqref{eq:probuniform-problem} (or \eqref{reduced_problem}). In both lemmas we will impose the Slater condition \eqref{sp1} at the point $\bar{u}$, which implies the Slater condition \eqref{sp} in the general setting. Accordingly, we will consider the neighborhood $\mathcal{N}(\bar{u})$ and the radius function $\rho\colon \mathcal{N}(\bar{u})\times\mathbb{S}^{m-1}\to\mathbb{R}\cup\{\infty\}$ introduced at the end of section \ref{generalframe}.
We have to distinguish two cases, namely, whether at the fixed point $\bar{u}$ the value $\rho (\bar{u},v)$ is finite or infinite for a given direction $v\in\mathbb{S}^{m-1}$. In the latter case, the associated ray in \eqref{rhodef} is unbounded, hence the radial probability $e$ realizes its maximum, which is one. Consequently, the (sub-) derivative at this ray reduces to zero. This is made more precise in the following lemma. 

For the remainder of Section~\ref{sec:prob-constraints}, we will  identify the dual of $U=L^2(D)$ with itself and work with the inner product defined by $(\cdot,\cdot)_{L^2(D)}=\langle \cdot,\cdot\rangle_{L^2(D)^*,L^2(D)}.$ 

\begin{lemma}\label{techlemma1}
As in Corollary \ref{cor-theo}, fix some $\bar{u}\in L^2(D)$ with associated expected state $\bar{y}$ satisfying \eqref{sp1}. If $v\in\mathbb{S}^{m-1}$ is such that $\rho (\bar{u},v)=\infty$, then $\partial^C_ue(\bar{u},v)=\{0\}$ for $e$ defined in \eqref{radprob}.
\end{lemma}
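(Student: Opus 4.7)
The strategy is to prove the stronger statement that $e(\cdot,v)$ is identically equal to $1$ on an entire $L^2(D)$-neighborhood of $\bar{u}$. Once this is established, the conclusion is immediate: a locally Lipschitzian function that is locally constant at $\bar{u}$ has zero Clarke directional derivative in every direction, so $\partial^C_ue(\bar{u},v)=\{0\}$.

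The main step hinges on exploiting the convexity of $g$ in its second argument (verified in Lemma~\ref{baseprop}) together with the Slater-type inequality $g(\bar{u},0)<0$ guaranteed by \eqref{sp1} (cf.\ Corollary~\ref{cor-theo}). Set $h(r):=g(\bar{u},r\Sigma^{1/2}v)$ for $r\geq 0$. The assumption $\rho(\bar{u},v)=\infty$ together with \eqref{rhodef} gives $h(r)\leq 0$ for all $r\geq 0$. Since $h$ is convex on $[0,\infty)$ and bounded above, it must be non-increasing: otherwise, the existence of $0\leq a<b$ with $h(a)<h(b)$ would force, via the convexity inequality $h(t)\geq h(b)+\tfrac{h(b)-h(a)}{b-a}(t-b)$, that $h(t)\to+\infty$ as $t\to\infty$, contradicting the upper bound. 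Combining monotonicity with $h(0)=g(\bar{u},0)<0$ yields the uniform estimate
\[
g(\bar{u},r\Sigma^{1/2}v)\leq g(\bar{u},0)<0\quad\forall r\geq 0.
\]

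Finally, let $C>0$ denote the global Lipschitz constant of $g$ obtained in Lemma~\ref{baseprop}. For any $u\in L^2(D)$ with $\|u-\bar{u}\|_{L^2(D)}<-g(\bar{u},0)/C$ and any $r\geq 0$,
\[
g(u,r\Sigma^{1/2}v)\leq g(\bar{u},r\Sigma^{1/2}v)+C\|u-\bar{u}\|_{L^2(D)}<0.
\]
Hence $\{r\geq 0\mid g(u,r\Sigma^{1/2}v)\leq 0\}=[0,\infty)$, so $\rho(u,v)=\infty$ and therefore $e(u,v)=\mu_\eta([0,\infty))=1$ on the entire $L^2(D)$-ball of radius $-g(\bar{u},0)/C$ around $\bar{u}$. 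The main subtlety is the convexity-based monotonicity argument: without it, $h$ could in principle approach $0$ from below and nothing would prevent a small perturbation of $\bar{u}$ from shrinking the feasible ray, destroying the local constancy of $e(\cdot,v)$.
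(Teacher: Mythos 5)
Your proof is correct, and it takes a genuinely different and more elementary route than the paper. The paper does \emph{not} attempt to show that $e(\cdot,v)$ is locally constant; instead it admits the possibility that controls $u$ arbitrarily close to $\bar{u}$ have $\rho(u,v)<\infty$, and it controls the resulting Fr\'echet subgradients via quantitative estimates imported from \cite{Hantoute2019} (a bound of the form $\norm{y^*}{L^2(D)}\lesssim \chi(\rho(u,v))\,\rho(u,v)$ combined with $\rho(u_k,v)\to\infty$ and $t\chi(t)\to 0$), then passes to the Mordukhovich subdifferential by a sequential limiting argument and finally takes the closed convex hull to reach $\partial^C_ue(\bar{u},v)=\{0\}$. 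Your argument short-circuits all of this: using only the convexity of $g(\bar{u},\cdot)$ along the ray (so that a convex function bounded above on $[0,\infty)$ is non-increasing, giving $g(\bar{u},r\Sigma^{1/2}v)\leq g(\bar{u},0)<0$ uniformly in $r$) and the \emph{global} Lipschitz constant of $g$ from Lemma~\ref{baseprop} (crucially, uniform in $z$), you show $\rho(u,v)=\infty$ and hence $e(u,v)=1$ on a whole ball around $\bar{u}$, after which the conclusion is trivial. This actually establishes a stronger statement than the lemma and is self-contained within the concrete PDE setting. What the paper's longer route buys is generality: it only needs the local Lipschitz continuity of $g$ together with the growth condition \eqref{growth}, whereas your perturbation step would break down if the Lipschitz modulus of $g(\cdot,z)$ were allowed to degrade as $\norm{z}{2}\to\infty$. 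Since Lemma~\ref{techlemma1} is stated in the concrete setting where Lemma~\ref{baseprop} supplies a global constant, your proof is a legitimate and cleaner alternative there.
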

\begin{proof}
Fix $v$ as indicated. As observed in the proof of Corollary \ref{cor-theo}, relation \eqref{sp1} yields \eqref{sp}. Accordingly, we may find a neighborhood $\mathcal{N}(\bar{u})$ of $\bar{u}$ with $g(u,0)\leq \frac{1}{2}g(\bar{u},0)<0$ for all $u\in\mathcal{N}(\bar{u})$. Then, by \cite[Corollary 1 (ii)]{Hantoute2019}, one has that
\begin{equation}\label{infinitycase}
\partial^F_ue(u,v')\subseteq\{0\}\quad\forall u\in\mathcal{N}(\bar{u})\,\,\forall v'\in\mathbb{S}^{m-1}:\rho (u,v')=\infty ,
\end{equation}
where $\partial^F_u$ denotes the partial Fréchet subdifferential with respect to $u$. Now, let $u\in\mathcal{N}(\bar{u})$, $h\in L^2(D)$, $v'\in\mathbb{S}^{m-1}$ with $\rho (u,v')<\infty$ and $y^*\in\partial^F_ue(u,v')$ be arbitrarily given. By \cite[Theorem 2]{Hantoute2019}, there exists 
\[
(u^*,z^*)\in\partial^C_ug(u,\rho (u,v')\Sigma^{1/2}v')\times  \partial^C_zg(u,\rho (u,v')\Sigma^{1/2}v')
\]
such that
\[
( y^*,h)_{L^2(D)}\leq\frac{\chi (\rho (u,v'))}{\langle z^*,\Sigma^{1/2}v'\rangle }(u^*,-h)_{L^2(D)},
\]
where $\chi (t):=Kt^{m-1}e^{-t^2/2}$ $(t\geq 0)$ is the density of the one-dimensional chi distribution with $m$ degrees of freedom and $K$ is some normalizing factor. Moreover, by \cite[Lemma 3]{Hantoute2019},
\[
\langle z^*,\Sigma^{1/2}v'\rangle\geq\frac{-g(u,0)}{\rho (u,v')}=\frac{|g(u,0)|}{\rho (u,v')}>0.
\]
By virtue of the definition of Clarke's generalized (partial) directional derivative and by \eqref{dirderiv}, we may continue with
\begin{eqnarray*}
( y^*,h)_{L^2(D)}&\leq&\frac{\chi (\rho (u,v'))}{\langle z^*,\Sigma^{1/2}v'\rangle }g^0(\cdot ,\rho (u,v')\Sigma^{1/2}v')(u;-h)\leq\frac{\chi (\rho (u,v'))}{\langle z^*,\Sigma^{1/2}v'\rangle }C\norm{h}{L^2(D)}\leq\frac{\chi (\rho (u,v'))\rho (u,v')}{|g(u,0)|}C\norm{h}{L^2(D)}\\
&\leq&\frac{2\chi (\rho (u,v'))\rho (u,v')}{|g(\bar{u},0)|}C\norm{h}{L^2(D)}.
\end{eqnarray*}
Since $h\in L^2(D)$ was arbitrary, it follows that 
\begin{equation}\label{normest}
\norm{y^*}{L^2(D)}\leq\frac{2\chi (\rho (u,v'))\rho (u,v')C}{|g(\bar{u},0)|}\quad\forall u\in\mathcal{N}(\bar{u})\,\,\forall v'\in\{\mathbb{S}^{m-1}:\rho (u,v')<\infty\}\,\,\forall y^*\in\partial^F_ue(u,v').
\end{equation}
In order to prove our claim, it will be sufficient to show that $\partial^M_ue(\bar{u},v)=\{0\}$, where $\partial^M_u$ refers to the (partial) Mordukhovich subdifferential. Indeed, recalling that $e(\cdot,v)$ is locally Lipschitzian around $\bar{u}$ by Theorem \ref{probderiv} and referring to a well-known relation between the Mordukhovich and Clarke subdifferentials \cite[Theorem 3.57]{mordukhovich2006}, we infer that
\[
\partial^C_ue(\bar{u},v)={\rm clco}\{\partial^M_ue(\bar{u},v)\}=\{0\},
\]
where ``clco" denotes the weak closure in $L^2(D)$. Since the Mordukhovich subdifferential is nonempty for locally Lipschitzian functions on Hilbert spaces (in general: Asplund spaces, see \cite[Corollary 1.81]{mordukhovich2006}), it will be sufficient to prove the inclusion $\partial^M_ue(\bar{u},v)\subseteq\{0\}$. In order to do so, choose an arbitrary $\bar{y}^*\in \partial^M_ue(\bar{u},v)$. We have to show that $\bar{y}^*=0$. By definition of the Mordukhovich subdifferential, there exist a strongly convergent sequence $u_k\to\bar{u}$ and a weakly (weak-star in general Asplund spaces) convergent sequence $y_k^*\rightharpoonup\bar{y}^*$ with $y_k^*\in\partial^F_u e(u_k,v)$. Now, define the subsequence $u_{k_l}$ satisfying the condition $\rho (u_{k_l},v)<\infty$. Then, it follows from  \cite[Lemma 1]{Hantoute2019} that, $\rho (u_{k_l},v)\to\infty$. On the other hand, $\lim\limits_{t\to\infty}t\cdot\chi (t)=0$. Along with \eqref{normest}, this yields that $\norm{y^*_{k_l}}{L^2(D)}\to 0$ for this subsequence. The remaining elements of the original sequence, however, satisfy $\rho (u_{k},v)=\infty$ which implies by \eqref{infinitycase} that $y^*_k=0$ for those $k$. Altogether we have shown that the original sequence satisfies $\norm{y^*_{k}}{L^2(D)}\to 0$. Then, the weak sequential lower semicontinuity of the norm provides the desired result $\bar{y}^*=0$. 
\end{proof}

\noindent
The following lemma addresses the alternative case, where the radius function takes a finite value and---contrary to the previous situation---substantial information on the subdifferential of the radial and, thus, on the total probability function is provided.
\begin{lemma}\label{techlemma2}
As in Corollary \ref{cor-theo}, fix some $\bar{u}\in L^2(D)$ with associated expected state $\bar{y}$ satisfying \eqref{sp1}.  If $v\in\mathbb{S}^{m-1}$ is such that $\rho (\bar{u},v)<\infty$, then 
\begin{eqnarray*}
\partial^C_ue(\bar{u},v)&=&-\chi (\rho (\bar{u},v))\cdot{\rm clco}\,\left\{\left.
\frac{1}{[P(0,\Sigma^{1/2}v)](x)}\cdot u_x\right| x\in M^*(\bar{u},v)\right\}\\&&\\
M^*(\bar{u},v)&:=&\{x\in\bar{D}\mid [P(0,\Sigma^{1/2}v)](x)>0,\,\,\bar{y}(x)+\rho (\bar{u},v)[P(0,\Sigma^{1/2}v)](x)=\alpha \},
\end{eqnarray*}
where for $x\in\bar{D}$, we denote by 
$u_x\in L^2(D)$ the continuous linear function defined by 
\[
( u_x,h)_{L^2(D)}:= [P(h,0)](x)=(A^{-1}h)(x)\quad (h\in L^2(D)).
\]
\end{lemma}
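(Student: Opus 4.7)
The plan is to derive the identity by combining two pieces: a closed-form representation of $\rho(\cdot,v)$ as an infimum of affine functionals, and the $C^1$-chain rule applied to the local identity $e(u,v)=F_\eta(\rho(u,v))$ from \eqref{erep}. Write $w(x):=[P(0,\Sigma^{1/2}v)](x)$ (continuous on $\bar{D}$) and $y_u:=S(u,0)=P(u,0)+y_0$. Using \eqref{affinedecomp}, the condition $g(u,r\Sigma^{1/2}v)\leq 0$ reads $y_u(x)+rw(x)\leq\alpha$ for every $x\in\bar{D}$. By the Slater condition \eqref{sp1} and continuity, for $u$ in a neighborhood $\mathcal{N}(\bar{u})$ the inequalities indexed by $\{x:w(x)\leq 0\}$ are satisfied automatically for all $r\geq 0$, so I would show
\[
\rho(u,v)=\inf\Bigl\{(\alpha-y_u(x))/w(x):x\in\bar{D},\,w(x)>0\Bigr\}\quad\forall u\in\mathcal{N}(\bar{u}).
\]

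Next, I would verify that this infimum is attained exactly on the active set $M^*(\bar{u},v)$, and that this set sits well inside $\{w>0\}$. Indeed, since $\bar{y}<\alpha$ on the compact $\bar{D}$, one has $\bar\rho:=\rho(\bar{u},v)>0$; if some maximizer $x^*$ of $y_{\bar{u}}+\bar\rho w$ had $w(x^*)\leq 0$, then $\bar{y}(x^*)\geq\alpha$, contradicting \eqref{sp1}. Hence $\min_{M^*}w>0$, and on a small enough neighborhood $\mathcal{N}(\bar{u})$ the infimum can be taken over a compact set $\hat{D}\subset\{w>0\}$ containing $M^*(\bar{u},v)$. By the $\mathcal{C}^{0,\alpha}(\bar{D})$-regularity of $A^{-1}$ (stated after Lemma~\ref{lem:cont-dep-sol}), the mapping $x\mapsto u_x$ is Hölder continuous from $\bar{D}$ into $L^2(D)$, so $x\mapsto u_x/w(x)$ is strongly continuous on $\hat{D}$. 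Writing $-\rho(u,v)=\sup_{x\in\hat{D}}(-\ell_x(u))$ as a supremum of jointly continuously indexed affine functionals over a compact set, the classical Ioffe--Tikhomirov/Valadier formula yields the standard convex subdifferential
\[
\partial(-\rho(\cdot,v))(\bar{u})=\mathrm{clco}\bigl\{u_x/w(x):x\in M^*(\bar{u},v)\bigr\}.
\]
As $\rho(\cdot,v)$ is concave (as established in Lemma~\ref{clarkereg}) and locally Lipschitz (being a finite inf of uniformly Lipschitz affine functions on $\hat{D}$), its Clarke subdifferential equals the superdifferential, so $\partial^C_u\rho(\bar{u},v)=-\partial(-\rho(\cdot,v))(\bar{u})$.

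Finally, since $F_\eta$ is continuously differentiable with $F_\eta'(r)=\chi(r)$ for $r\geq 0$, the chain rule for the Clarke subdifferential of a $C^1$ function composed with a locally Lipschitz one applies with equality, giving
\[
\partial^C_u e(\bar{u},v)=\chi(\bar\rho)\,\partial^C_u\rho(\bar{u},v)=-\chi(\bar\rho)\,\mathrm{clco}\bigl\{u_x/w(x):x\in M^*(\bar{u},v)\bigr\},
\]
which is the claimed identity.

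The main obstacle I expect is twofold. First, the formula for $\rho$ is a priori an infimum over the non-compact open set $\{w>0\}\subset\bar{D}$, and care is needed to localize it to a genuinely compact index set on which the gradient map $x\mapsto u_x/w(x)$ is strongly (not just weakly) continuous; this relies on the Hölder regularity of $A^{-1}$ and the strict positivity of $w$ on $M^*$, itself a consequence of the strict Slater condition~\eqref{sp1} forcing $\bar\rho>0$. Second, passing from inclusion to equality in the subdifferential formula for a sup of affine functionals over a compact index set must be justified by the joint continuity afforded by the above reduction; once this is done, the chain rule step is essentially mechanical because $F_\eta\in C^1$.
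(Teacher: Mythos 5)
Your proposal follows essentially the same route as the paper: represent $\rho(\cdot,v)$ locally as an infimum of the affine quotients $(\alpha-y_u(x))/w(x)$ over $\{w>0\}$, localize the argmin to a compact index set inside $\{w>0\}$ using \eqref{sp1}, apply the Ioffe--Tikhomirov formula to the convex function $-\rho(\cdot,v)$, and finish with the $C^1$ chain rule for $F_\eta$. The uniform localization you flag as the main obstacle (that the minimizers for all $u$ near $\bar{u}$ stay in one fixed compact subset of $\{w>0\}$) is exactly where the paper invests most of its effort, via a sequential compactness argument; your outline correctly identifies the ingredients needed to carry it out.
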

\begin{proof}
Fix $v$ as indicated and let $\mathcal{N}(\bar{u})$ be the neighborhood of $\bar{u}$ as defined above \eqref{rhodef}. With the concrete form of $g$ in our PDE-constrained problem \eqref{eq:probuniform-problem} and the affine representation \eqref{affinedecomp}, for each $u\in\mathcal{N}(\bar{u})$, $\rho (u,v)$ is the unique (possibly infinite) value such \eqref{rhodef} holds true, which amounts to 
\begin{equation}\label{firstrep}
\left\{r\geq 0 \, \left| \, \max\limits_{x\in\bar{D}}\,\,[P(u,r\Sigma^{1/2}v)](x)+y_0(x)\leq\alpha\right.\right\}=[0,\rho (u,v)]\quad\forall u\in\mathcal{N}(\bar{u}).
\end{equation}
By definition of $\mathcal{N}(\bar{u})$ it holds that $g(u,0)<0$ for all $u\in\mathcal{N}(\bar{u})$ or, equivalently, that
\begin{equation}\label{sp3}
\max\limits_{x\in\bar{D}}\,\,[P(u,0)](x)+y_0(x)<\alpha \quad\forall u\in\mathcal{N}(\bar{u}). 
\end{equation}
Taking into account the linearity of $P$, we may rewrite \eqref{firstrep} as
\begin{equation}\label{secondrep}
\left\{r\geq 0\, \left| \, r\cdot [P(0,\Sigma^{1/2}v)](x)\leq\alpha -y_0(x)-[P(u,0)](x)\quad\forall x\in\bar{D}\right.\right\}=[0,\rho (u,v)]\quad\forall u\in\mathcal{N}(\bar{u}).
\end{equation}
As a consequence of \eqref{sp3}, we have that
\begin{equation}\label{sp4}
\alpha -y_0(x)-[P(u,0)](x)>0\quad\forall x\in\bar{D}\,\,\forall u\in\mathcal{N}(\bar{u}).
\end{equation}
Introduce an open subset of $\bar{D}$ by
\[
\mathcal{O}:=\{x\in\bar{D}\mid [P(0,\Sigma^{1/2}v)](x)>0\}.
\]
Note that $\mathcal{O}\neq\emptyset$ because otherwise the left-hand side of \eqref{secondrep} would be equal to $\mathbb{R}_+$ thanks to \eqref{sp4}. Then, on the right-hand side of \eqref{secondrep} we would end up at the contradiction $\rho (u,v)=\infty$ for all $u\in\mathcal{N}(\bar{u})$ with our assumption $\rho (\bar{u},v)<\infty$.
Now, again by virtue of \eqref{sp4}, equation \eqref{secondrep} can be reformulated as
\begin{equation}\label{thirdrep}
\left\{r\geq 0\, \left| \, r\cdot [P(0,\Sigma^{1/2}v)](x)\leq\alpha -y_0(x)-[P(u,0)](x)\quad\forall x\in\mathcal{O}\right.\right\}=[0,\rho (u,v)]\quad\forall u\in\mathcal{N}(\bar{u}).    
\end{equation}
This implies that
\begin{equation}\label{infrep}
\rho (u,v)=\inf\limits_{x\in\mathcal{O}}\gamma (u,x)\quad\forall u\in\mathcal{N}(\bar{u}),
\end{equation}
where $\gamma\colon L^2(D)\times\mathcal{O}\to\mathbb{R}$ is a continuous function defined by
\[
\gamma (u,x):=\frac{\alpha -y_0(x)-[P(u,0)](x)}{[P(0,\Sigma^{1/2}v)](x)}\quad\forall x\in\mathcal{O}\,\,\forall u\in L^2(D).
\]
In order to verify the continuity of $\gamma$, let $(u_n,x_n)\to (u,x)$ in $L^2(D)\times\mathcal{O}$. 
Then, $|\gamma (u,x)-\gamma (u,x_n)|\to 0$ by definition of the operators $S$ and $P$ and by the fact that $S(u',z')\in\mathcal{C}(\bar{D})$ for all $(u',z')$ (see \eqref{affinedecomp}).
It remains to verify that $|\gamma (u,x_n)-\gamma (u_n,x_n)|\to 0$, which amounts to showing that
\[
\left|\frac{[P(u-u_n,0)](x_n)}{[P(0,\Sigma^{1/2}v)](x_n)}\right|\to 0.
\]
Since $[P(0,\Sigma^{1/2}v)](x_n)\to [P(0,\Sigma^{1/2}v)](x)>0$ by definition of $\mathcal{O}$, it suffices to show that $[P(u-u_n,0)](x_n)\to 0$. This, however, follows readily from \eqref{pest}.

Our aim is to represent $\rho$ not just as an infimum over an open set but as a minimum over a compact set. We introduce the multifunction
\begin{equation}\label{mudef}
M(u):=\{x\in\mathcal{O}\mid\rho (u,v)=\gamma (u,x)\}\quad (u\in\mathcal{N}(\bar{u})).
\end{equation}
We show first that $M$ has nonempty images. For this purpose,  fix an arbitrary $u\in\mathcal{N}(\bar{u})$ and let 
$\{x_n\}\subseteq\mathcal{O}$ be a sequence with $\gamma (u,x_n)\to\rho (u,v)$. Without loss of generality, $x_n\to\bar{x}\in\bar{D}$. Assume that $\bar{x}\notin\mathcal{O}$. Then, $[P(0,\Sigma^{1/2}v)](\bar{x})=0$ because of $x_n\in\mathcal{O}$. At the same time,
\[
\alpha -y_0(\bar{x})-[P(u,0)](\bar{x})>0
\]
by virtue of \eqref{sp4}. It follows that $\gamma (u,x_n)\to\infty$, whence the contradiction $\rho (u,v)=\infty$ with the already mentioned fact that $\mathcal{O}\neq\emptyset$. Therefore, $\bar{x}\in\mathcal{O}$. Now, the continuity of $\gamma$ on $L^2(D)\times\mathcal{O}$ yields that
\[
\gamma (u,x_n)\to\gamma (u,\bar{x})=\rho (u,v).
\]
Hence, $\bar{x}\in M(u)\neq\emptyset$ as was to be shown.

Next, we see that $M(\bar{u})$ is closed, hence compact as a subset of $\bar{D}$. Indeed, let $x_n\in M(\bar{u})$ be a sequence with $x_n\to\bar{x}$. Since $x_n\in\mathcal{O}$, the same reasoning as before allows us to verify that $\bar{x}\in\mathcal{O}$. Then, again by continuity of $\gamma$,
\[
\rho (\bar{u},v)=\gamma (\bar{u},x_n)\to\gamma (\bar{u},\bar{x}),
\]
providing that $\rho (\bar{u},v)=\gamma (\bar{u},\bar{x})$ and, thus, $\bar{x}\in M(\bar{u})$.

Knowing that $M(\bar{u})$ is a nonempty and compact subset of $\mathcal{O}$, there must exist another compact subset $\mathcal{K}\subseteq\mathcal{O}$ such that $M(\bar{u})\subseteq {\rm int}\,\mathcal{K}$. We claim that 
\begin{equation}\label{rhomin}
\rho (u,v)=\min\limits_{x\in\mathcal{K}}\,\,\gamma (u,x)\quad\forall u\in\mathcal{N}(\bar{u}),
\end{equation}
which would be our desired representation of $\rho$ as a minimum over a compact set. For \eqref{rhomin} to hold true it would be sufficient to verify that 
\begin{equation}\label{uniformbound}
M(u)\subseteq\mathcal{K}\quad\forall u\in\mathcal{N}(\bar{u}).
\end{equation}
Indeed, assuming this holds true, we may fix an arbitrary $u\in\mathcal{N}(\bar{u})$ and select some $\hat{x}\in M(u)$ which is possible since this latter set is nonempty, as we have shown before. Then, by \eqref{uniformbound}, $\hat{x}\in\mathcal{K}\subseteq\mathcal{O}$, and so, by definition of $M(u)$,
\[
\rho (u,v)\leq\min\limits_{x\in\mathcal{K}}\,\,\gamma (u,x)\leq\gamma (u,\hat{x})=\rho (u,v),
\]
where the first inequality relies on \eqref{infrep}. This is \eqref{rhomin} and it remains to verify \eqref{uniformbound}. If this relation was false, then there would exist sequences $u_n\to\bar{u}$ and $x_n\in M(u_n)\setminus\mathcal{K}$. Without loss of generality, $x_n\to\bar{x}\in\bar{D}$. Then $\bar{x}\notin {\rm int}\,\mathcal{K}\subseteq\mathcal{O}$ and, by definition of the set $\mathcal{K}$, $\bar{x}\notin M(\bar{u})$. Select some $\hat{x}\in M(\bar{u})\subseteq\mathcal{O}$ (recalling that $M(\bar{u})\neq\emptyset)$. Then,
\[
\gamma (\bar{u},\bar{x})=\lim\limits_n \gamma (u_n,x_n)=\lim\limits_n \rho (u_n,v)\leq\lim\limits_n \gamma (u_n,\hat{x})=\gamma (\bar{u},\hat{x})=\rho (\bar{u},v)<\gamma (\bar{u},\bar{x}),
\]
where the strict inequality comes as a consequence of $\bar{x}\in\mathcal{O}\setminus M(\bar{u})$. This contradiction establishes \eqref{uniformbound}.

Now, defining a function $\kappa :L^2(D)\to\mathbb{R}$ by
\[
\kappa (u):=\max\limits_{x\in\mathcal{K}}\,\,-\gamma (u,x)\quad u\in L^2(D),
\]
we derive from \eqref{rhomin} that 
\begin{equation}\label{kapparho}
 \rho (u,v)=-\kappa (u)\quad\forall u\in\mathcal{N}(\bar{u}).  
\end{equation}
Since $\kappa$ is the maximum of continuous, affine linear functions $-\gamma (\cdot ,x)$ over a compact set, it is convex and finite-valued. Therefore, and taking into account the continuity of $\gamma$, the Ioffe--Tikhomirov Theorem 
\cite[Section 4.2, Theorem 3]{Ioffe-Tikhomirov} allows us to represent the  convex subdifferential of $\kappa$ at $\bar{u}$ as
\[
\partial \kappa (\bar{u})={\rm clco}\,\left\{
\frac{\partial (-\gamma)}{\partial u}(\bar{u},x) \, \Big\vert \, x\in\tilde{M}(\bar{u},v)\right\},\quad\tilde{M}(\bar{u},v):=\{x\in \mathcal{K}\mid -\gamma (\bar{u},x)=\kappa (\bar{u})\}.
\]
With $u_x\in L^2(D)$ as introduced in the statement of this lemma, we arrive at
\[
\frac{\partial (-\gamma)}{\partial u}(\bar{u},x)=\frac{1}{[P(0,\Sigma^{1/2}v)](x)}\cdot u_x\quad\forall x\in\mathcal{O}.
\]

The continuity of $\gamma$ along with the compactness of $\mathcal{K}$ imply via \eqref{rhomin} that $\kappa$ is continuous. Hence, being also convex, $\kappa$ is even locally Lipschitz continuous (cf.~\cite[Prop. 2.107]{Bonnans2013}) and so, its subdifferential in the sense of convex analysis coincides with its Clarke subdifferential \cite[Proposition 2.2.7]{clarke1983}. Applying now a well-known calculus rule for the latter \cite[Proposition 2.3.1]{clarke1983} and taking into account \eqref{kapparho}, we derive that
\begin{equation}\label{almostdone}
\partial_u^C \rho (\bar{u},v)=\partial^C (-\kappa (\bar{u}))=-\partial^C \kappa (\bar{u})=-\partial \kappa (\bar{u})=-{\rm clco}\,\left\{
\frac{1}{[P(0,\Sigma^{1/2}v)](x)}\cdot u_x \, \Big\vert \, x\in\tilde{M}(\bar{u},v)\right\}.
\end{equation}
Since $\rho (\cdot , v)$ is finite-valued on $\mathcal{N}(\bar{u})$ (see \eqref{rhomin}), the representation \eqref{erep} of the radial probability function yields that
$e(u,v)=F_\eta(\rho(u,v))$ locally around $\bar{u}$. With the one-dimensional function $F_\eta$ being continuously differentiable and having derivative $\chi$ (the density of the chi distribution with $m$ degrees of freedom), the chain rule for Clarke's subdifferential \cite[Theorem 2.3.9 (ii)]{clarke1983} provides the identity
\[
\partial^C_ue(\bar{u},v)=F'_\eta (\rho (\bar{u},v))\cdot\partial^C_u\rho(\bar{u},v)=\chi (\rho (\bar{u},v))\cdot\partial^C_u\rho(\bar{u},v).
\]
From here, the assertion of our lemma will follow along with \eqref{almostdone}, once we can show that $\tilde{M}(\bar{u},v)=M^*(\bar{u},v)$, where the latter set has been introduced in the statement of this lemma. Indeed, if $x\in\tilde{M}(\bar{u},v)$, then $x\in\mathcal{K}\subseteq\mathcal{O}$ and $\gamma (\bar{u},x)=\rho (\bar{u},v)$ by \eqref{kapparho}. Then, $x\in M^*(\bar{u},v)$ by definition of $\gamma$ and of the expected state $\bar{y}$. Conversely, let $x\in M^*(\bar{u},v)$. Then, $x\in\mathcal{O}$ and 
$\rho (\bar{u},v)=\gamma (\bar{u},x)$ again by definition of $\gamma$. Hence, $x\in M(\bar{u})$ by \eqref{mudef}. Now, \eqref{uniformbound} yields that $x\in\mathcal{K}$ and, hence, $x\in\tilde{M}(\bar{u},v)$ by \eqref{kapparho}.
\end{proof}
\subsection{Subdifferential of the probability function and optimality conditions in the concrete PDE setting}
\label{subsec:subdiff-prob-func-PDE}
The technical results of the previous section allow us now to formulate a fully explicit (in terms of the problem data) upper estimate for the subdifferential of the probability function in problem \eqref{eq:probuniform-problem} and explicit necessary optimality conditions for that same problem. Combining Corollary \ref{cor-theo} with Lemmas \ref{techlemma1} and \ref{techlemma2}, we end up with { the following theorem:}
\begin{theorem}\label{explicitprobderiv}
Fix some $\bar{u}\in L^2(D)$ with associated expected state $\bar{y}$ satisfying \eqref{sp1}. Then, $\varphi$ is locally Lipschitzian around $\bar{u}$ and, with the notation introduced in Lemma \ref{techlemma2}, the  subdifferential of the probability function at $\bar{u}$ can be estimated from above {as follows:} 
\begin{eqnarray}\label{explicitupperestimate}
\partial^C\varphi (\bar{u})\subseteq -\int\limits_{\{v\in\mathbb{S}^{m-1}\mid\rho (\bar{u},v)<\infty \}}\chi (\rho (\bar{u},v))\cdot{\rm clco}\,\left\{
\frac{1}{[P(0,\Sigma^{1/2}v)](x)}\cdot u_x \,\Big\vert\, x\in M^*(\bar{u},v)\right\}d\mu_\zeta(v).
\end{eqnarray}
Here, we use the notation $M^*(u,v)$  introduced in Lemma \ref{techlemma2}. The inclusion holds as an equality provided that $M^*(\bar{u},v)$ is a singleton for $\mu_\zeta$ a.e. $v\in\mathbb{S}^{m-1}$ with $\rho (\bar{u},v)<\infty$  (implying condition \eqref{measurezero2} thanks to Lemmas \ref{techlemma1} and \ref{techlemma2} and, hence, strict differentiability of $\varphi $) or that the set \eqref{bounded-realizations} is bounded (implying the assumptions of Lemma \ref{clarkereg}).
\end{theorem}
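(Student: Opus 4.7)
The plan is to assemble the statement from three ingredients already in hand: Corollary~\ref{cor-theo} (which supplies the spherical integral upper estimate on $\partial^C\varphi(\bar{u})$ and local Lipschitzianity of $\varphi$), Lemma~\ref{techlemma1} (which kills the integrand on the set where the radius is infinite), and Lemma~\ref{techlemma2} (which produces the explicit closed-convex-hull formula for $\partial^C_u e(\bar{u},v)$ on the set where the radius is finite).

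First I would apply Corollary~\ref{cor-theo} directly: the Slater assumption \eqref{sp1} is in force, so $\varphi$ is locally Lipschitzian around $\bar{u}$ and the inclusion
\[
\partial^C\varphi (\bar{u})\subseteq\int\limits_{\mathbb{S}^{m-1}}\partial^C_ue(\bar{u},v)\D\mu_\zeta (v)
\]
from Theorem~\ref{probderiv} holds. Next, I would split the domain of integration according to the partition
\[
\mathbb{S}^{m-1}=\{v\mid \rho(\bar{u},v)=\infty\}\cup\{v\mid \rho(\bar{u},v)<\infty\}.
\]
On the first set, Lemma~\ref{techlemma1} yields $\partial^C_u e(\bar{u},v)=\{0\}$, so that piece contributes nothing to the Aumann integral. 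On the second set, Lemma~\ref{techlemma2} gives exactly the $-\chi(\rho(\bar{u},v))\cdot\mathrm{clco}\{\cdot\}$ expression appearing inside \eqref{explicitupperestimate}. Substituting these into the inclusion above produces the upper estimate as stated (the minus sign and the scalar $\chi(\rho(\bar{u},v))$ can be pulled outside the integral by standard properties of the Aumann integral).

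For the equality claim under the singleton hypothesis, I would observe that if $M^*(\bar{u},v)$ reduces to a single point $x(v)$ for $\mu_\zeta$-a.e.~$v$ with $\rho(\bar{u},v)<\infty$, then the closed convex hull in the formula of Lemma~\ref{techlemma2} collapses to one element, so $\partial^C_u e(\bar{u},v)$ is a singleton for a.e.~such $v$; combined with Lemma~\ref{techlemma1} on the complement, the cardinality condition \eqref{measurezero2} is satisfied. The equality clause in Theorem~\ref{probderiv} then upgrades the inclusion to an equality and yields strict Hadamard differentiability of $\varphi$ at $\bar{u}$. For the alternative equality hypothesis that the set $M$ in \eqref{bounded-realizations} is bounded, the joint convexity of $g$ is available from Lemma~\ref{baseprop}, so all hypotheses of Lemma~\ref{clarkereg} and hence of Theorem~\ref{exactformula} are in place, providing equality in the preceding display and, after the same substitution, in \eqref{explicitupperestimate}.

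The main obstacle I anticipate is not analytical but bookkeeping: one must justify, for the Aumann-integral version of the inclusion, that suppressing the $\mu_\zeta$-null information carried on $\{v\mid\rho(\bar{u},v)=\infty\}$ (where the integrand is $\{0\}$) is legitimate, and that measurable selections $\hat{x}^*_v\in\partial^C_ue(\bar{u},v)$ guaranteed by Theorem~\ref{probderiv} can be written in the form $-\chi(\rho(\bar{u},v))[P(0,\Sigma^{1/2}v)](x(v))^{-1}u_{x(v)}$ with $x(v)\in M^*(\bar{u},v)$ chosen measurably. Both can be handled by a standard measurable-selection argument using the continuity of $\gamma$ and of $\rho(\bar{u},\cdot)$ established inside the proof of Lemma~\ref{techlemma2}; no new estimate is required.
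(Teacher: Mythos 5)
Your proposal is correct and follows exactly the route the paper takes: the paper gives no separate proof for this theorem, introducing it with the phrase ``Combining Corollary~\ref{cor-theo} with Lemmas~\ref{techlemma1} and \ref{techlemma2}, we end up with,'' which is precisely your assembly of the spherical-integral inclusion, the splitting of $\mathbb{S}^{m-1}$ by finiteness of $\rho(\bar{u},\cdot)$, and the two equality mechanisms via \eqref{measurezero2} and via Lemma~\ref{clarkereg}/Theorem~\ref{exactformula}. Your added remarks on measurable selection and on the joint convexity of $g$ supplied by Lemma~\ref{baseprop} are accurate and only make explicit what the paper leaves implicit.
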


\noindent
Likewise, Corollary \ref{coroptcond} yields the following necessary optimality conditions: 
\begin{theorem}\label{theooptcond}
In the optimization problem \eqref{eq:probuniform-problem}, let there exist some control $\hat{u}\in L^2(D)$ such that the associated random state $\hat{y}$ solving \eqref{eq:probuniform-problem-b}--\eqref{eq:probuniform-problem-c} satisfies the probabilistic constraint \eqref{eq:probuniform-problem-d}
strictly, i.e.,
\begin{equation}\label{gsp}
\varphi(\hat{u})=\mathbb{P}(\hat{y}(x,\omega)\leq \alpha\quad\forall x\in D)>p.
\end{equation}
Consider some $u^*\in L^2(D)$ that is feasible in 
\eqref{eq:probuniform-problem}. Assume that the associated expected state $\tilde{y}:=S(u^*,0)$ satisfies
\begin{equation}\label{sp5}
\tilde{y}(x)<\alpha\quad\forall x\in\bar{D}.
\end{equation}
Then, for $u^*$ to be a solution to \eqref{optprob}, it is necessary that there exists some $\lambda\geq 0$ with
\begin{equation}\label{kkt4}
D F(u^*)\in -\lambda\int\limits_{\{v\in\mathbb{S}^{m-1}\mid\rho (u^*,v)<\infty\}}\chi (\rho (u^*,v))\cdot{\rm clco}\,\left\{
\frac{1}{[P(0,\Sigma^{1/2}v)](x)}\cdot u_x \, \Big\vert \, x\in M^*(u^*,v)\right\}d\mu_\zeta(v)
\end{equation}
and 
\begin{equation}\label{complementarity}
\lambda (\mathbb{P}(y^*(x,\omega)\leq \alpha\quad\forall x\in D)-p)=0,
\end{equation}
where $y^*(\cdot,\omega)=S(u^*,\xi (\omega))$ is the random state associated with the control $u^*$ and random event $\omega\in\Omega$. Moreover, this inclusion also becomes sufficient for $u^*$ to be a solution to \eqref{optprob}, if the conditions from Theorem \ref{explicitprobderiv} for \eqref{explicitupperestimate} to hold as an equality are satisfied.
\end{theorem}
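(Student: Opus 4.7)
The proof is essentially a combination of Corollary \ref{coroptcond} (which delivers abstract KKT conditions for problem \eqref{reduced_problem}) with Theorem \ref{explicitprobderiv} (which replaces the abstract Clarke subdifferential $\partial^C\varphi(u^*)$ by an explicit spherical integral). So the plan is to verify that the hypotheses of both results are in force at $u^*$ and then to substitute the explicit subdifferential formula into the abstract inclusion.

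First, I would check the hypotheses of Corollary \ref{coroptcond} at the point $u^*$. The Slater condition \eqref{sp1} in $\mathbb{R}^m$ is exactly the assumption \eqref{sp5} on the associated expected state $\tilde{y}=S(u^*,0)$, so it holds. The generalized Slater point in $U$ is provided by $\hat{u}$ from \eqref{gsp}, since $\mathbb{P}(\hat{y}(x,\omega)\leq\alpha\,\forall x\in D)=\varphi(\hat{u})>p$ by the definition of $\varphi$ in \eqref{reduced_problem}. Feasibility of $u^*$ in \eqref{eq:probuniform-problem} is equivalent to $\varphi(u^*)\geq p$, as required. Corollary \ref{coroptcond} then yields the necessity of the abstract conditions \eqref{kkt}, i.e.~the existence of $\lambda\geq 0$ with $DF(u^*)\in\lambda\partial^C\varphi(u^*)$ and $\lambda(\varphi(u^*)-p)=0$.

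Next, I would invoke Theorem \ref{explicitprobderiv} at $u^*$; its single hypothesis is precisely \eqref{sp5}, which we already have. This provides the explicit upper estimate \eqref{explicitupperestimate} for $\partial^C\varphi(u^*)$. Substituting this estimate into the inclusion $DF(u^*)\in\lambda\partial^C\varphi(u^*)$ and multiplying the right-hand side by $\lambda\geq 0$ (which preserves the inclusion since the set on the right is convex and contains $0$ when the Aumann integral is taken in the usual sense) gives precisely \eqref{kkt4}. The complementarity relation \eqref{complementarity} is nothing but a rewriting of $\lambda(\varphi(u^*)-p)=0$ using the definition of $\varphi$ in terms of the random state $y^*(\cdot,\omega)=S(u^*,\xi(\omega))$.

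For the sufficiency part, assume in addition the equality-case hypotheses of Theorem \ref{explicitprobderiv}: either the singleton condition on $M^*(u^*,v)$ for $\mu_\zeta$-a.e.~$v$ with $\rho(u^*,v)<\infty$ (which implies \eqref{measurezero2} via Lemmas \ref{techlemma1} and \ref{techlemma2}), or boundedness of the set $M$ from \eqref{bounded-realizations} (which yields the conditions of Lemma \ref{clarkereg}). In either situation the inclusion \eqref{explicitupperestimate} becomes an equality, so \eqref{kkt4} is equivalent to the abstract inclusion in \eqref{kkt}. The sufficiency clause of Proposition \ref{optcond}, transferred to our setting via Corollary \ref{coroptcond}, then guarantees that $u^*$ is a global minimizer of \eqref{eq:probuniform-problem}.

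The step requiring the most care is the verification that the hypotheses of Corollary \ref{coroptcond} and Theorem \ref{explicitprobderiv} are all implied by the two conditions \eqref{gsp} and \eqref{sp5} actually stated in the theorem, together with feasibility of $u^*$; the remaining structural requirements (convexity of $g$, local Lipschitzness, moderate growth \eqref{growth}) were already established in Lemmas \ref{baseprop} and \ref{growthcheck} and do not need to be re-checked. Apart from this bookkeeping, no further argument is needed — the proof is a direct assembly of the machinery developed in the previous subsections.
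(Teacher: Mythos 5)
Your proposal is correct and follows exactly the route the paper intends: the paper gives no separate proof of this theorem, presenting it as the direct combination of Corollary \ref{coroptcond} (abstract KKT conditions under \eqref{gsp} and \eqref{sp5}) with the explicit subdifferential estimate of Theorem \ref{explicitprobderiv}, which is precisely your assembly. The only cosmetic quibble is your parenthetical justification for passing from $\partial^C\varphi(u^*)\subseteq I$ to $\lambda\partial^C\varphi(u^*)\subseteq\lambda I$ -- this holds trivially for any scalar $\lambda$ and needs neither convexity nor $0\in I$.
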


\bigskip\noindent
The following example shall illustrate Theorems \ref{probderiv} and \ref{explicitprobderiv}, respectively. The data will be chosen in such as way so that \eqref{sp1} is satisfied and, hence, to apply both results. As a consequence, the probability function will be locally Lipschitzian. However, it will fail to be differentiable despite the fact that all problem data are smooth. This underlines why it is necessary to employ tools from generalized differential calculus in the analysis of the optimization problem \eqref{eq:probuniform-problem}. Moreover, we are going to calculate explicitly the upper estimate \eqref{explicitupperestimate} for the subdifferential of the probability function provided in Theorem \ref{explicitprobderiv}. 
\begin{example}\label{illustex}
In our problem \eqref{eq:probuniform-problem}, let
$m:=1, D:=(0,2\pi)^2, \alpha :=1, f_0:=0, \xi\sim\mathcal{N}(0,1)$ and
\[
\phi_1(x_1,x_2):= 2\sin^2x_1(\sin^2x_2-\cos^2x_2)+2\sin^2x_2(\sin^2x_1-\cos^2x_1). 
\]
Fix a control $\hat{u}(x_1,x_2):=\sin x_1\sin x_2$. Then, for each $\tau\in\mathbb{R}$ and each $z_1\in\mathbb{R}$, 
\[
y_{\tau ,z_1}(x_1,x_2):=\tau\sin x_1\sin x_2+z_1\sin^2x_1\sin^2x_2
\]
is the solution to the PDE
\[
-\Delta y(x)=2\tau\hat{u}(x)+f(x,z_1)\quad (x\in D),\quad y(x)=0\quad (x\in \partial D).
\]
For the purpose of illustration, we consider the restriction of the probability function $\varphi$ to the one-dimensional subspace { spanned by $\hat{u}$}, i.e., the probability function $\tilde{\varphi}\colon \mathbb{R}\to\mathbb{R}$ defined by 
\[
\tilde{\varphi}(\tau):=\varphi (2\tau\hat{u})=\mathbb{P}(\omega\mid y_{\tau ,\xi (\omega)}(x)\leq 1\quad\forall x\in (0,2\pi)^2).
\]
One easily checks that \eqref{sp1} is satisfied at $2\tau\hat{u}$ for all $\tau\in (-1,1)$. Accordingly, Corollary \ref{cor-theo} (via Theorem \ref{probderiv}) yields that, at all $2\tau\bar{u}$ with $\tau\in (-1,1)$, $\varphi$ is locally Lipschitzian (hence $\tilde{\varphi}$ is locally Lipschitzian at all  $\tau\in (-1,1)$) and the upper estimate of Theorem \ref{explicitprobderiv} holds true. 
We will derive an explicit formula for $\tilde{\varphi}$. Since $y_{\tau ,z_1}(x_1,x_2)=0$  whenever $\sin x_1\sin x_2=0$, one gets that
\[
\{z_1\mid y_{\tau ,z_1}(x)\leq\alpha\}=\left\{z_1\left|z_1\leq\frac{1-\tau\sin x_1\sin x_2}{\sin^2x_1\sin^2x_2}\quad\forall x\in (0,2\pi)^2:\sin x_1\sin x_2\neq 0\right.\right\}.
\]
If $0\leq\tau <1$, then the fraction on the right-hand side is minimized whenever $\sin x_1\sin x_2=1$  (e.g., by choosing $x:=(\pi/2,\pi/2$)). The achieved minimum equals $1-\tau$. Likewise, if $-1<\tau\leq 0$ then this fraction is minimized whenever $\sin x_1\sin x_2=-1$  (e.g., by choosing $x:=(\pi/2,3\pi/2)$). The achieved minimum equals $1+\tau$. Accordingly,
\[
\tilde{\varphi}(\tau)=\left\{\begin{array}{lll}
 \mathbb{P}(\omega\mid\xi (\omega )\leq 1+\tau)=\Phi (1+\tau)   &\mbox{if}&-1<\tau\leq 0 \\
 \mathbb{P}(\omega\mid\xi (\omega )\leq 1-\tau)=\Phi (1-\tau)    & \mbox{if}&0\leq\tau <1
\end{array}\right.
\]
Here, $\Phi$ denotes the distribution function of the one-dimensional standard Gaussian distribution $\mathcal{N}(0,1)$. 
It is locally Lipschitzian as expected, but it fails to be differentiable at zero because the derivatives at zero of the two pieces become $F(1)>0$ and $-F(1)<0$, respectively, where $F=\Phi'$ refers to the standard Gaussian density.

Next, we are going to calculate explicitly the upper estimate \eqref{explicitupperestimate} from Theorem \ref{explicitprobderiv} at the reference point $\bar{u}=0$. First, we note that $\Sigma^{1/2}=1$ due to $\xi\sim\mathcal{N}(0,1)$ and $\mathbb{S}^{m-1}=\mathbb{S}^0=\{-1,1\}$. By definition of the operator $P$ and of the state $y_{\tau ,z_1}$, we have that
\[
[P(0,1)](x)=y_{0 ,1}(x_1,x_2)=\sin^2x_1\sin^2x_2;\quad
[P(0,-1)](x)=y_{0 ,-1}(x_1,x_2)=-\sin^2x_1\sin^2x_2\quad\forall
(x_1,x_2)\in (0,2\pi)^2.
\]
Since $y_0=0$ by our data assumption $f_0=0$, we infer from \eqref{firstrep} that
\begin{eqnarray*}
\left\{r\geq 0\left|\max\limits_{x\in\bar{D}}\,\,[P(\bar{u},r\Sigma^{1/2}\cdot (-1))](x)+y_0(x)\leq\alpha\right.\right\}&=&
\left\{r\geq 0\left|\max\limits_{x\in [0,2\pi]^2}\,\,[P(0,-r)](x)\leq 1\right.\right\}\\&=&\left\{r\geq 0\left|\max\limits_{x\in [0,2\pi]^2}\,\,-r\sin^2x_1\sin^2x_2\leq 1\right.\right\}=[0,\infty ),
\end{eqnarray*}
hence $\rho (\bar{u},-1)=\infty$ by definition of $\rho$. Therefore, $v=-1$ does not contribute to the integral in \eqref{explicitupperestimate}. Likewise,
\[
\left\{r\geq 0\left|\max\limits_{x\in\bar{D}}\,\,[P(\bar{u},r\Sigma^{1/2}\cdot 1)](x)+y_0(x)\leq\alpha\right.\right\}=\left\{r\geq 0\left|\max\limits_{x\in [0,2\pi]^2}\,\,r\sin^2x_1\sin^2x_2\leq 1\right.\right\}=[0,1],
\]
hence $\rho (\bar{u},1)=1<\infty$ and so $v=1$ does contribute to the integral in \eqref{explicitupperestimate}. Since the uniform distribution on $\mathbb{S}^0$ can be represented as $\mu_\zeta=\frac{1}{2}\delta_{-1}+\frac{1}{2}\delta_{1}$, where $\delta_t$ is the Dirac measure at $t\in\mathbb{R}$, the negative value of this integral appearing in \eqref{explicitupperestimate} reduces to
\[
-\frac{1}{2}\chi (\rho (0,1))\cdot{\rm clco}\,\left\{
\frac{1}{[P(0,1)](x)}\cdot u_x\mid x\in M^*(0,1)\right\}=
-\frac{1}{2}\chi (1)\cdot{\rm clco}\,\left\{
\frac{1}{\sin^2x_1\sin^2x_2}\cdot u_x\mid x\in M^*(0,1)\right\}.
\]
It remains to determine the index set $M^*(0,1)$. First note that $\bar{y}=0$ holds true for the expected state (associated with control $\bar{u}=0$ and $z_1=0$) by our data assumption $f_0=0$. By definition of $M^*$ in Lemma \ref{techlemma2} we arrive at
\[
M^*(0,1)=\{x\in [0,2\pi]^2\mid\sin^2x_1\sin^2x_2=1\}=\{(\pi/2,\pi/2),(3\pi/2,\pi/2),(\pi/2,3\pi/2),(3\pi/2,3\pi/2)\}.
\]
This yields the final explicit upper estimate for the subdifferential of the probability function
\[
\partial^C\varphi (0)\subseteq -\frac{1}{2}\chi (1)\cdot{\rm co}\,\left\{u_{(\pi/2,\pi/2)},u_{(3\pi/2,\pi/2)},u_{(\pi/2,3\pi/2)},u_{(3\pi/2,3\pi/2)}\right\},
\]
where the $u_x\in L^2((0,2\pi)^2)$ have been introduced in the statement of Lemma \ref{techlemma2} and where the closure operation in the convex hull can be omitted due to the set being finite.
\end{example}

\noindent
We want to illustrate Theorem \ref{theooptcond} in an example.
\begin{example}\label{illustex2}
In our problem \eqref{eq:probuniform-problem}, let (with $\Phi$ being the distribution function of $\mathcal{N}(0,1)$)
\[
 D:=(0,1)^2, m:=1, \alpha :=1/16, f_0:=0, \xi\sim\mathcal{N}(0,1), \phi_1(x_1,x_2):= 2(x_1 (1-x_1)+x_2(1-x_2)), p:=\Phi(1)\approx 0.841. 
\]
Fix a control $\tilde{u}\in L^2(D)$ defined by the relation 
\begin{equation}\label{utilde}
 (\tilde{u},h)_{L^2(D)} =[P(h,0)](1/2,1/2)=[A^{-1}h](1/2,1/2)\quad\forall h\in L^2(D).
\end{equation}
We define the objective in \eqref{eq:probuniform-problem-a} by $F(u):=\norm{u-\tilde{u}}{L^2(D)}^2$ and claim that $u^*:=0$ is a solution of the optimization problem.  Evidently, for each $z_1\in\mathbb{R}$, the function
$y(x_1,x_2):=z_1x_1(1-x_1)x_2(1-x_2)$
is a solution to the PDE
\begin{equation}\label{justthepde2}
-\Delta y(x)=z_1\phi_1(x)\quad (x\in D),\quad y(x)=0\quad (x\in \partial D),
\end{equation}
which is \eqref{eq:PDE} with $u=0$. It follows from \eqref{affinedecomp} that
\begin{equation}\label{sandp}
[S(0,z_1)](x)=[P(0,z_1)](x)=[P(z_1\phi_1,0)](x)=z_1x_1(1-x_1)x_2(1-x_2)\quad\forall z_1\in\mathbb{R}.
\end{equation}
In particular, $S[(u^*,0)](x)=0$ for all $x\in\bar{D}$ so that \eqref{sp5} is satisfied.
Moreover,
\[
\varphi (u^*)=\varphi (0)=
\mathbb{P}(\omega \mid\xi (\omega )x_1(1-x_1)x_2(1-x_2)\leq 1/16\,\,\forall x\in (0,1)^2)=\mathbb{P}(\omega\mid\xi (\omega )\in (-\infty ,1])=\Phi (1)=p.
\]
This means that $u^*=0$ is a feasible control in problem \eqref{eq:probuniform-problem} at which the probabilistic constraint is active.
Next, we calculate the Clarke subdifferential $\partial^C\varphi (u^*)$.
Recalling that, due to the assumptions on our data, $\Sigma =1$ and $\mathbb{S}^{m-1}=\mathbb{S}^0=\{-1,1\}$, we identify the two radii $\rho (0,1), \rho (0,-1)$ via \eqref{firstrep} from the relations 
\begin{eqnarray*}
[0,\rho (0,1)]&=&\left\{r\geq 0\left|\max\limits_{x\in [0,1]^2}\,\,[P(0,r)](x)\leq 1/16\right.\right\}=[0,1],\\
\left[0,\rho (0,-1)\right]&=&\left\{r\geq 0\left|\max\limits_{x\in [0,1]^2}\,\,[P(0,-r)](x)\leq 1/16\right.\right\}=[0,\infty ).
\end{eqnarray*}
Accordingly, $\rho (0,1)=1$ and $\rho (0,-1)=\infty$.
By definition and by virtue of \eqref{sandp},
\[
M^*(0,1)=\{x\in [0,1]^2\mid [P(0,1)](x)>0,\,\,[P(0,1)](x)=1/16\}=\{(1/2,1/2)\}.
\]
Consequently, $\# M^*(0,1)=1$ and, hence, Theorem \ref{explicitprobderiv} yields that
\begin{equation}\label{gradformula}
\partial^C\varphi (u^*)=-\int\limits_{v=1}
\frac{\chi (\rho (0,1))}{[P(0,1)](1/2,1/2)}\cdot u_{(1/2,1/2)}d\mu_\zeta(v)=\left\{-\frac{1}{2}\frac{\chi (1)}{[P(0,1)](1/2,1/2)}\cdot u_{(1/2,1/2)}\right\}=\{-8\chi (1)\tilde{u}\},
\end{equation}
where $\tilde{u}$ has been introduced above. In particular, $\varphi$ is strictly differentiable with derivative $\nabla\varphi (u^*)=-8\chi (1)\tilde{u}$.

Finally, we want to apply the optimality conditions of Theorem \ref{theooptcond}. In order to do so, it remains first to check condition \eqref{gsp}. Thanks to \eqref{sandp} we have that $[P(\phi_1,0)](1/2,1/2)=1/16>0$.
This implies that $\tilde{u}\neq 0$ because otherwise a contradiction with \eqref{utilde} would occur. Now, assuming that condition \eqref{gsp} is violated, it would follow that $\varphi (u)\leq p$ for all $u\in L^2((0,1)^2)$. Owing to $\varphi (u^*)=p$ (see above), it would follow that $u^*$ is a (global) maximum of $\varphi$. By Clarke's necessary optimality condition \cite[Proposition 2.3.2]{clarke1983} and by \eqref{gradformula} we would end up at the contradiction
$0\in\partial^C(\varphi )(u^*)=\{-8\chi(1)\tilde{u}\}$
with $\tilde{u}\neq 0$ (see above). Summarizing, conditions \eqref{gsp}, \eqref{sp5}, and $\# M^*(0,1)=1$ (while $\rho (0,-1)=\infty$) are satisfied, which allows us to apply Theorem \ref{theooptcond}. In order to prove our claim that $u^*:=0$ is a solution of \eqref{eq:probuniform-problem}, it remains to verify \eqref{kkt4} and \eqref{complementarity}. We define a multiplier $\lambda :=\frac{-1}{4\chi (1)}<0$. As for the complementarity relation, it is satisfied thanks to the already stated equality $\varphi (0)=p$. On the other hand, \eqref{gradformula} yields that
\begin{eqnarray*}
\nabla F(0)&=&-2\tilde{u}=\lambda 8\chi (1)\tilde{u} =\lambda\partial^C\varphi (u^*)
=\lambda\int\limits_{v=1}
\frac{\chi (\rho (0,1))}{[P(0,1)](1/2,1/2)}\cdot u_{(1/2,1/2)}d\mu_\zeta (v)\\
&=&\lambda\int\limits_{\{v\in\mathbb{S}^{0}\mid\rho (u^*,v)<\infty\}}\chi (\rho (u^*,v))\cdot{\rm clco}\,\left\{
\frac{1}{[P(0,\Sigma^{1/2}v)](x)}\cdot u_x \, \Big\vert \, x\in M^*(u^*,v)\right\}d\mu_\zeta(v)
\end{eqnarray*}
where we recall that $\rho (0,-1)=\infty$ and $M^*(0,1)=\{(1/2,1/2)\}$. Now, \eqref{kkt4} as a sufficient condition proves that $u^*=0$ is a solution of the formulated optimization problem.
\end{example}
\section{Almost sure and robust constraints}
\label{sec:almost-sure}
In Section \ref{sec:prob-constraints}, we considered optimization problems with chance constraints of the form $\mathbb{P}(g(u,\xi (\omega))\leq 0)\geq p$. In this section, we will consider two closely related models:
\begin{eqnarray*}
g(u,\xi (\omega))\leq 0\quad \mathbb{P}\mbox{-almost surely}&&\mbox{(almost sure)},\\
g(u,z)\leq 0\quad\forall z\in\mbox{ supp }\xi&&\mbox{(robust)}.
\end{eqnarray*}
Optimality conditions for PDE-constrained optimization problems with almost sure state constraints were recently been derived in \cite{Geiersbach2021c, Geiersbach2022a} and robust formulations have also been seen in the literature; see, e.g., \cite{Kolvenbach2018}. { In \cite{Ramponi2018}, these models were compared for convex optimization problems with a finite-dimensional control.} We will show optimality conditions for the almost sure and robust cases (Section~\ref{subsec:abstract-approach} and Section~\ref{sec:robust-constraints}, respectively) using our model PDE. First, we will discuss some  nuances in the various models.

The probabilistic model from Section~\ref{sec:prob-constraints} is equivalent to the almost sure model if $p=1.$ However, as mentioned in Remark~\ref{rem:case-p-equals-1}, the optimality conditions obtained there exclude the case $p=1.$ One might hope that optimality conditions for the probability level $p=1$ could be obtained in the limit $p\to 1$, but this is not the case, as demonstrated in the following example.
\begin{example}\label{asvsprob}
Let $\xi$ be a one-dimensional random variable with density $f(x):=2(1-x)\chi_{[0,1]}$ and consider the problem
\begin{equation}\label{exprob}
\min_{u \in \R} \{F(u):=u\}\quad\textup{s.t.}\quad \varphi(u):=\mathbb{P}(g(u,\xi (\omega))\leq 0) \geq p,
\end{equation}
where $g(u,z):=z-u$ for $z,u\in\mathbb{R}$. Then, the probability function is given by
\[
\varphi (u)=\begin{cases}
    0 & u<0, \\
   2u-u^2  & u\in [0,1],\\1&u>1.
\end{cases}
\]
At $u>0$, $\varphi$ is continuously differentiable. For $p>0$,  the constraint set $\varphi (u)\geq p$ is identified as the interval $[u^*_p,\infty )$ with $u^*_p:=1-\sqrt{1-p}\in (0,1]$ necessarily being the solution of \eqref{exprob}. Note that $\varphi (u^*_p)=p$, so the inequality constraint is binding at $u^*_p$. If $p=1$, then $\varphi'(u^*_p)=\varphi'(1)=0$. Hence, the basic constraint qualification is violated at the solution $u^*_1=1$ of problem \eqref{exprob} 
and no optimality condition can be established. If $p<1$,
then
\[
\varphi'(u^*_p)=2-2(1-\sqrt{1-p})>0
\]
and the basic constraint qualification for necessary optimality conditions is satisfied. It follows that the Lagrange multiplier of the KKT conditions is uniquely determined by the relation
\[
1=F'(u^*_p)=\lambda_p\varphi'(u^*_p)
\]
yielding $\lambda_p=1/(2\sqrt{1-p})$. Now, for $p\to 1$, we get
$F'(u^*_p)\to 1$ on the one hand, but $\varphi'(u^*_p)\to 0$ and $\lambda_p\to\infty$ on the other hand, so that the limit of KKT conditions does not provide any information.  
\end{example}
The negative observations made in the previous example result from the fact that setting $p=1$ in the probabilistic version provides a degenerate problem description. In contrast, the almost sure and robust perspective (which are equivalent in this example) both work well:
\begin{example}
We reconsider problem \eqref{exprob} but in its equivalent robust description
\begin{equation}
\min_{u \in \R} \{F(u):=u\}\quad\textup{s.t.}\quad \tilde{\varphi}(u):=\sup\limits_{z\in [0,1]} g(u,z) \leq 0
\end{equation}
(note that $[0,1]$ is the support of $\xi$ in Example~\ref{asvsprob}). The fact that this problem is equivalent to \eqref{exprob} will follow from Lemma \ref{asrob} below, because $g$ is continuous in $z$, and hence the robust description is equivalent to the almost-sure description, which in turn is always equivalent with the probabilistic one. Clearly, the sup-function is given by
\[
\tilde{\varphi}(u)=\sup\limits_{z\in [0,1]}z-u=1-u.
\]
Then, $\tilde{\varphi}(2)<0$, so that $\hat{u}=2$ is a Slater point of the problem and equivalent optimality conditions can be formulated. The feasible set equals the interval $[1,\infty)$ and at the solution $u^*=1$ of the problem, one has the KKT relation 
\[
1=F'(1)=(-1)(\tilde{\varphi})'(1)=(-1)(-1).
\]
\end{example}
The following example shows that generally, an almost sure model is not equivalent to a robust one.
\begin{example}\label{robvsas}
Consider the one-dimensional control problem
\[
\min_{u \in L^2(0,1)} \norm{u}{L^2(0,1)}^2 \quad \textup{s.t.} \quad y(x,\omega)\leq 1\quad\forall x\in [0,1]\quad\mathbb{P}\textup{-a.s.},
\]
where $y(x,\omega)$ is the solution to
\[
-\Delta y(x,\omega)=u(x)+2f(\xi (\omega))\quad x\in (0,1) \,\,\mathbb{P} \textup{-a.s.};\quad y(x,\omega)=0\quad x\in \{0,1 \} \,\,\mathbb{P}\textup{-a.s.},
\]
$\xi$ is a one-dimensional random variable uniformly distributed on $[0,1]$, and 
\[
f(z):=\begin{cases} 4&z\neq 0.5,\\5&z=0.5.
\end{cases}
\]
Then, $u:=0$ is a feasible control of the problem. Indeed, the random states associated with this control are given by
\[
y(x,\omega)=f(\xi (\omega))x(1-x)\quad\forall x\in [0,1], \forall \omega\in\Omega .
\]
It follows that 
\[
y(x,\omega)\leq\frac{1}{4}f(\xi (\omega))\leq 1\quad\forall\omega : \xi (\omega )\neq 0.5.
\]
Since $\xi (\omega )\neq 0.5$ $\mathbb{P}$-almost surely, feasibility of $u=0$ follows. In particular, this control is optimal in the given problem. Now, adopting the robust perspective, we observe that $[0,1]$ is the support of the random variable and formulate the problem 
\[
\min \norm{u}{L^2(0,1)}^2 \quad \textup{s.t.} \quad y(x,z)\leq 1\quad\forall x\in [0,1]\quad\forall z\in [0,1],
\]
where $y(x,z)$ is the solution to
\[
-\Delta y(x,z)=u(x)+2f(z)\quad x\in (0,1), z\in [0,1];\quad y(x,z)=0\quad x\in \{0,1 \}, z\in [0,1].
\]
Then, the parameterized state $y(x,z)=f(z)x(1-x)$ solving the PDE under the control $u=0$ fails to be feasible because of $y(0.5,0.5)=5/4>1$. Consequently, $u=0$ is not feasible, so cannot be optimal for the robust problem.
\end{example}

{ It is notable that an example is constructed by Ramponi in \cite[Example 3]{Ramponi2018} that is similar in spirit to our Example~\ref{robvsas}.}
The reason for the difference between the robust and almost sure perspectives in { these examples is due to a lack of lower semicontinuity} with respect to the parameter $z$. { It is interesting to observe that in \cite{Ramponi2018}, the robust solution is generally inconsistent in the following sense: the solution to the problem generated by randomly sampled scenarios does not converge almost surely to the solution of the robust problem as the number of scenarios is taken to infinity.}

We have the following relation between models { with almost sure or robust constraints}.
\begin{lemma}\label{asrob}
Let $\xi$ be an $m$-dimensional random vector {with density} on a probability space $(\Omega,\mathcal{F},\mathbb{P})$ and denote by $\Xi\subseteq\mathbb{R}^m$ the support of its law $\mathbb{P}\circ\xi^{-1}$. Furthermore, let $h\colon\mathbb{R}^m\to\mathbb{R}$ be a lower semicontinuous function. Then,
\[
h(\xi (\omega))\leq 0\quad\mathbb{P}\mbox{-a.s.}\quad \Longleftrightarrow \quad h(z)\leq 0\quad\forall z\in\Xi.
\]
\end{lemma}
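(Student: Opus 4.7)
The plan is to prove the two implications separately. Write $\mu := \mathbb{P}\circ\xi^{-1}$ for the law of $\xi$ on $\mathbb{R}^m$. I will rely on two standard facts about the support: first, $\mu(\Xi)=1$ (so $\xi(\omega)\in\Xi$ $\mathbb{P}$-a.s.), and second, every open set $U\subseteq\mathbb{R}^m$ that meets $\Xi$ satisfies $\mu(U)>0$ (this is precisely the characterization of support points as those whose every open neighborhood has positive measure).

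For the implication ``$\Leftarrow$'', I would argue directly. Assume $h(z)\leq 0$ for all $z\in\Xi$. Since $\mu(\Xi)=1$, the set $N:=\{\omega\mid\xi(\omega)\notin\Xi\}$ has $\mathbb{P}$-measure zero. For every $\omega\in\Omega\setminus N$, one has $\xi(\omega)\in\Xi$ and therefore $h(\xi(\omega))\leq 0$. This is the desired almost-sure inequality, and this direction does not require lower semicontinuity of $h$.

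For the implication ``$\Rightarrow$'', I would proceed by contradiction. Suppose $h(\xi(\omega))\leq 0$ $\mathbb{P}$-a.s., yet there exists $z_0\in\Xi$ with $h(z_0)>0$. By lower semicontinuity of $h$, the superlevel set $U:=\{z\in\mathbb{R}^m\mid h(z)>0\}$ is open; moreover $z_0\in U$. Since $z_0\in\Xi$ and $U$ is an open neighborhood of $z_0$, the defining property of the support yields $\mu(U)>0$. Transporting this back to $(\Omega,\mathcal{F},\mathbb{P})$ gives
\[
\mathbb{P}(\omega\mid h(\xi(\omega))>0)=\mathbb{P}(\omega\mid\xi(\omega)\in U)=\mu(U)>0,
\]
which contradicts the almost-sure inequality. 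Hence $h(z)\leq 0$ for every $z\in\Xi$.

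The main (and only) subtlety is the use of lower semicontinuity in the ``$\Rightarrow$'' direction: it is precisely what makes the violation set $\{h>0\}$ open, so that the support characterization can be invoked. Without this hypothesis the equivalence genuinely fails, as illustrated by Example~\ref{robvsas}, where an isolated spike of the data produces a robust infeasibility that is invisible to the almost-sure formulation. No measurability issue arises either, since a lower semicontinuous function is automatically Borel measurable, so all sets appearing in the argument are in $\mathcal{F}$ after composition with the random vector $\xi$.
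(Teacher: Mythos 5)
Your proof is correct and follows essentially the same route as the paper's: the ``$\Leftarrow$'' direction uses $\mathbb{P}(\xi\in\Xi)=1$, and the ``$\Rightarrow$'' direction combines lower semicontinuity (openness of $\{h>0\}$, where the paper instead takes an explicit ball around the offending point) with the fact that open neighborhoods of support points carry positive measure. The remarks on where lower semicontinuity is needed and on measurability are accurate but do not change the argument.
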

\begin{proof}
 ($\Longrightarrow$): Assume there exists $z\in\Xi$ with $h(z)>0$. Since $h$ is lower semicontinuous, there exists some $r>0$ such that $h(z')>0$ for all $z'$ in the ball $\mathbb{B}(z,r) \in \R^m$ of radius $r$ centered around the point $z$. From here and from $z\in\Xi$, we get the contradiction
 \[
 \mathbb{P}\{\omega\mid h(\xi (\omega))>0\}\geq \mathbb{P}\{\omega\mid\xi (\omega)\in\mathbb{B}(z,r)\}>0.
 \]
 ($\Longleftarrow$): The reverse direction follows readily from
 \[
 1=\mathbb{P}\{\omega\mid\xi (\omega)\in\Xi\}\leq\mathbb{P}\{\omega\mid h(\xi (\omega))\leq 0\}.
 \]
\end{proof}

\subsection{Almost sure state constraints}
\label{subsec:abstract-approach}
In this section, we will derive optimality conditions for a model problem with almost sure state constraints. As mentioned in the introduction, the functional-analytic techniques applied here largely come from traditional approaches developed by Rockafellar and Wets for two-stage problems in stochastic programming \cite{Rockafellar1976,Rockafellar1976b,Rockafellar1975,Rockafellar1976c}. Much of the analysis there hinged on an assumption of \textit{relatively complete recourse}, a requirement that every control $u$ must result in a feasible state $y$. This assumption is too strong for our model problem \eqref{eq:a.s.-problem} below, since this would make the state constraint redundant for any choice of $u.$ In \cite{Rockafellar1976c} and in \cite{Geiersbach2021c}, this assumption was lifted, resulting in singular Lagrange multipliers. Unlike in the previous section, there is little hope for a more explicit representation and these elements do not provide a structure that is amenable to numerical approximation. In practice, a regularization as in \cite{Geiersbach2021c} can be used so that the singular terms only appear in the limit. Here, however, we study the unregularized problem, which also shares structural similarities to that of its robust counterpart, as we will see in Section~\ref{sec:robust-constraints}. As a main result, we will see that we can more efficiently arrive at the conditions provided by \cite{Geiersbach2021c, Geiersbach2022a} by first applying standard optimality theory and then refining the Lagrange multipliers using a Yosida--Hewitt-type decomposition. 

{ We recall the setting given in Assumption~\ref{subasu:U}--Assumption~\ref{subasu:F} and consider} the problem
\begin{subequations}
    \label{eq:a.s.-problem}
    \begin{alignat}{3}
    \min_{u \in L^2(D)}\, F(u) & & &&  \\
   \text{s.t.}  \quad -\Delta  y(x,\omega)  &=  u(x) + \tilde{f}(x,\omega), & &&\quad x \in D \quad \text{$\mathbb{P}$-a.s.}, \label{eq:a.s.-problem-constraint1} \\
 y(x,\omega) &=0, & &&\quad x \in \partial D  \quad \text{$\mathbb{P}$-a.s.}, \label{eq:a.s.-problem-constraint2}\\
 y(x,\omega) &\leq \alpha & && x\in D \quad \text{$\mathbb{P}$-a.s.}\phantom{.} \label{eq:a.s.-problem-constraint3}
    \end{alignat}
\end{subequations}
We have replaced the structured source term $f$ with a random field $\tilde{f}$; while more general differential operators would be possible using the results from \cite{Geiersbach2021c, Geiersbach2022a}, we choose the simple model here for the sake of comparison with the model introduced in Section~\ref{subsec:prob-PDE}. We will now assume the following.

\begin{assumption}
\label{ass:PDE-standing-2}
The open and bounded set $D\subseteq\mathbb{R}^d$ ($d=1,2,3$) is of class $S$. 
Additionally, $u \in L^2(D)$ and $\tilde{f}\in L_{\pP}^\infty(\Omega,L^2(D))$.
\end{assumption}

\noindent We recall the continuous linear operator $A^{-1} \colon L^2(D) \rightarrow \mathcal{C}(\bar{D})$ defined after Lemma~\ref{lem:cont-dep-sol}.
\begin{lemma}
\label{lem:cont-dep-sol-2}
Suppose Assumption~\ref{ass:PDE-standing-2} holds. Then for any $u, \tilde{f}(\cdot,\omega)\in L^2(D)$, and $\pP$-almost every $\omega \in \Omega$, there a unique solution $y(\cdot,\omega) \in H_0^1(D)\cap \mathcal{C}(\bar{D})$ to \eqref{eq:a.s.-problem-constraint1}--\eqref{eq:a.s.-problem-constraint2}. Moreover, $y \in L_{\pP}^\infty(\Omega, \mathcal{C}(\bar{D}))$ and there exists a constant $C>0$ such that 
\begin{equation}
\label{eq:apriori-PDE-omega}
     \norm{y}{L_{\pP}^\infty(\Omega,\mathcal{C}(\bar{D}))}   \leq C( \|u\|_{L^2(D)}+\|\tilde{f}\|_{L_{\pP}^\infty(\Omega,L^2(D))}).
\end{equation}
\end{lemma}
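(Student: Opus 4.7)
The plan is to apply the pathwise result from Lemma \ref{lem:cont-dep-sol} $\pP$-almost surely and then lift the estimates to the Bochner-space setting by exploiting strong measurability of the source term. Concretely, I would set up the argument by noting that Assumption \ref{ass:PDE-standing-2} forces the operator $A^{-1}\colon L^2(D)\to\mathcal{C}(\bar{D})$ (constructed after Lemma \ref{lem:cont-dep-sol}) to be well-defined, continuous, and linear. The random source term $\tilde{f}\in L_{\pP}^\infty(\Omega,L^2(D))$ is strongly measurable, so the map $\omega \mapsto u+\tilde{f}(\cdot,\omega)$ is strongly measurable into $L^2(D)$.

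First, I would argue existence and uniqueness of a pathwise solution. Fix $\omega$ in the full-measure set on which $\tilde{f}(\cdot,\omega)\in L^2(D)$. By Lemma \ref{lem:cont-dep-sol} (applied with $z=0$ and $f_0:=\tilde{f}(\cdot,\omega)$, or equivalently with no $\phi_i$ term), equation \eqref{eq:a.s.-problem-constraint1}--\eqref{eq:a.s.-problem-constraint2} admits a unique solution $y(\cdot,\omega)=A^{-1}(u+\tilde{f}(\cdot,\omega))\in H_0^1(D)\cap\mathcal{C}(\bar{D})$, together with the estimate
\[
\norm{y(\cdot,\omega)}{\mathcal{C}(\bar{D})}\leq C\,\norm{u+\tilde{f}(\cdot,\omega)}{L^2(D)}\leq C\bigl(\norm{u}{L^2(D)}+\norm{\tilde{f}(\cdot,\omega)}{L^2(D)}\bigr),
\]
where $C$ is the operator norm of $A^{-1}$, depending only on $D$.

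Next, I would address the Bochner-space membership. Since $A^{-1}\in\mathcal{L}(L^2(D),\mathcal{C}(\bar{D}))$ and $\omega\mapsto u+\tilde{f}(\cdot,\omega)$ is strongly measurable into $L^2(D)$, continuity of $A^{-1}$ yields strong measurability of $\omega\mapsto y(\cdot,\omega)=A^{-1}(u+\tilde{f}(\cdot,\omega))$ as a mapping into $\mathcal{C}(\bar{D})$. Taking the essential supremum in $\omega$ of the pathwise estimate above gives
\[
\norm{y}{L_{\pP}^\infty(\Omega,\mathcal{C}(\bar{D}))}\leq C\bigl(\norm{u}{L^2(D)}+\esssup_{\omega\in\Omega}\norm{\tilde{f}(\cdot,\omega)}{L^2(D)}\bigr)=C\bigl(\norm{u}{L^2(D)}+\norm{\tilde{f}}{L_{\pP}^\infty(\Omega,L^2(D))}\bigr),
\]
which simultaneously establishes \eqref{eq:apriori-PDE-omega} and shows $y\in L_{\pP}^\infty(\Omega,\mathcal{C}(\bar{D}))$.

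There is no real obstacle here; the only subtlety is confirming that strong measurability is preserved by the linear continuous operator $A^{-1}$, which is standard (a strongly measurable function is the $\pP$-a.e.\ limit of simple functions, and continuity of $A^{-1}$ transports this limit into $\mathcal{C}(\bar{D})$). Everything else is an invocation of Lemma \ref{lem:cont-dep-sol} pathwise followed by passing to the essential supremum.
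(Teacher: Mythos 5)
Your proposal is correct and follows essentially the same route as the paper: apply the pathwise well-posedness and $\mathcal{C}(\bar{D})$-estimate from Lemma~\ref{lem:cont-dep-sol} (via the operator $A^{-1}$) for a.e.\ $\omega$, take the essential supremum using $\tilde{f}\in L_{\pP}^\infty(\Omega,L^2(D))$, and deduce measurability of $\omega\mapsto y(\cdot,\omega)$ from the continuity of $A^{-1}$ together with the strong measurability of $\tilde{f}$. Your extra remark on why continuous linear operators preserve strong measurability is a harmless elaboration of the same argument.
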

\begin{proof}
Following the proof of Lemma~\ref{lem:cont-dep-sol}, we obtain the existence of a unique solution $y(\cdot,\omega) \in H_0^1(D) \cap \mathcal{C}(\bar{D})$ and a $C>0$ such that
\[
\norm{y(\cdot,\omega)}{\mathcal{C}(\bar{D})} \leq C \lVert u+\tilde{f}(\cdot,\omega)\rVert_{L^2(D)} \leq C(\lVert u \rVert_{L^2(D)} + \lVert \tilde{f}\rVert_{L_{\pP}^\infty(\Omega,L^2(D))}),
\]
which shows \eqref{eq:apriori-PDE-omega} due to our assumption on $\tilde{f}.$ Moreover, $\omega \mapsto y(\cdot,\omega)$ is measurable since $\tilde{f}$ is measurable and the operator $A^{-1}\colon L^2(D) \rightarrow \mathcal{C}(\bar{D})$ is continuous. 
\end{proof}
 
Analogously to \eqref{affinedecomp}, we will define a control-to-state operator $\mathcal{S}\colon L^2(D) \rightarrow L_{\pP}^\infty(\Omega, \mathcal{C}(\bar{D}))$ by 
\[
\mathcal{S}(u) = \mathcal{A}^{-1}(\mathcal{B}u + \tilde{f}).
\]
Here, $\mathcal{A}^{-1}\colon  L_{\pP}^\infty(\Omega, L^2(D)) \rightarrow L_{\pP}^\infty(\Omega, \mathcal{C}(\bar{D}))$ is defined by $[\mathcal{A}^{-1}y](\omega)=A^{-1}y(\cdot,\omega)$ a.s. The operator
$\mathcal{B}$ is the continuous injection that maps each element of $L^2(D)$ to itself in $L_{\pP}^\infty(\Omega, L^2(D))$. 
For the state constraint, let $\boldsymbol{\alpha}$ denote the constant counterpart to $\alpha$ in $\mathcal{C}(\bar{D})$ and define
\[\mathcal{G}\colon L^2(D) \rightarrow L_{\pP}^\infty(\Omega,\mathcal{C}(\bar{D})), u \mapsto \mathcal{S}(u)-\boldsymbol{\alpha}.
\]
Additionally, we define the (convex and closed) cone $\mathcal{K}:=\{y \in L_{\pP}^\infty(\Omega,\mathcal{C}(\bar{D})) : y(x,\omega)\leq 0 \, \text{ a.e. in } D \text{ $\pP$-a.s.}\}$ and its polar cone $\mathcal{K}^{-}:=\{ y^* \in (L_{\pP}^\infty(\Omega,\mathcal{C}(\bar{D})))^*: \langle y^*,y\rangle\leq 0 \quad \forall y \in \mathcal{K}\}.$ It is straightforward to verify the $(-\mathcal{K})$-convexity of $\mathcal{G}$, i.e., 
\[
\lambda \mathcal{G}(u)+(1-\lambda)\mathcal{G}(v) - \mathcal{G}(\lambda u+(1-\lambda)v) \in -\mathcal{K} \quad {\forall u, v \in L^2(D), \forall \lambda \in [0,1]},
\]
using the affine linearity of $\mathcal{G}.$ First, we present optimality conditions obtained using standard convex analysis.
\begin{lemma}
\label{lemma:optimality-almost-sure-basic}
Suppose Assumption~\ref{ass:PDE-standing-2} holds and $F$ is convex and continuously Fr\'echet differentiable on $L^2(D)$. Assume the constraint qualification $0 \in \textup{int} \{\mathcal{G}(L^2(D))-\mathcal{K} \}$ holds. Then, for $u^*$ to be a solution to \eqref{eq:a.s.-problem}, it is necessary and sufficient that there exists some multiplier $\lambda^* \in (L_{\pP}^\infty(\Omega,\mathcal{C}(\bar{D})))^*$ with 
\begin{subequations}
\label{eq:KKT-operator-short}
  \begin{align}
 & \langle D F(u^*), h\rangle + \langle \lambda^*,\mathcal{A}^{-1}\mathcal{B}h\rangle = 0 \quad  \forall h \in L^2(D), \label{eq:KKT-operator-1}\\
 & \mathcal{G}(u^*) \in \mathcal{K}, \quad \lambda^* \in  \mathcal{K}^{-}, \quad \langle \lambda^*, \mathcal{G}(u^*) \rangle = 0. \label{eq:KKT-operator-2}
  \end{align}
\end{subequations} 
\end{lemma}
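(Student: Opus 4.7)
My plan is to recognize \eqref{eq:a.s.-problem} as a convex optimization problem in a Banach space with a conic inequality constraint, and then apply a standard generalized Kuhn--Tucker theorem (e.g., Zowe--Kurcyusz, or \cite[Theorem 3.6]{Bonnans2013}) under the given Robinson-type constraint qualification.

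First, I would establish that the reduced problem
\[
\min_{u \in L^2(D)} F(u) \quad \text{s.t.} \quad \mathcal{G}(u) \in -\mathcal{K}
\]
(upon identifying $\mathcal{G}(u) \in \mathcal{K}$ with $-\mathcal{G}(u) \in -\mathcal{K}$ in the usual sign convention, or simply keeping the formulation as stated) is equivalent to \eqref{eq:a.s.-problem}. Here $\mathcal{K}$ is a closed convex cone in $L^\infty_\pP(\Omega, \mathcal{C}(\bar D))$, and $\mathcal{G}$ is affine linear, hence continuously Fr\'echet differentiable with $D\mathcal{G}(u) = \mathcal{A}^{-1}\mathcal{B}$ independently of $u$. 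In particular, the problem is convex (since $F$ is convex and $\mathcal{G}$ is $(-\mathcal{K})$-convex, as already noted), so necessary optimality conditions will also be sufficient.

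Second, I would interpret the hypothesis $0 \in \textup{int}\{\mathcal{G}(L^2(D)) - \mathcal{K}\}$ as the Robinson CQ at $u^*$: since $\mathcal{G}$ is affine, $\mathcal{G}(u^*) + D\mathcal{G}(u^*)(L^2(D)) = \mathcal{G}(L^2(D))$, so the assumption is precisely
\[
0 \in \textup{int}\bigl(\mathcal{G}(u^*) + D\mathcal{G}(u^*)(L^2(D)) - \mathcal{K}\bigr).
\]
Invoking the generalized Kuhn--Tucker theorem in Banach spaces with conic constraints then yields the existence of a Lagrange multiplier $\lambda^* \in \mathcal{K}^-$ such that the Lagrangian
\[
L(u,\lambda^*) = F(u) + \langle \lambda^*, \mathcal{G}(u) \rangle
\]
is stationary in $u$ at $u^*$, together with complementarity $\langle \lambda^*, \mathcal{G}(u^*)\rangle = 0$. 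Computing $D_u L(u^*,\lambda^*)$ with $D\mathcal{G}(u^*) = \mathcal{A}^{-1}\mathcal{B}$ gives \eqref{eq:KKT-operator-1}, while primal feasibility $\mathcal{G}(u^*) \in \mathcal{K}$, dual feasibility $\lambda^* \in \mathcal{K}^-$, and the complementarity relation provide \eqref{eq:KKT-operator-2}.

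For sufficiency, I would argue directly from convexity: given $\lambda^*$ and $u^*$ satisfying \eqref{eq:KKT-operator-short}, for any feasible $u \in L^2(D)$ one has $\mathcal{G}(u) \in \mathcal{K}$, hence $\langle \lambda^*, \mathcal{G}(u)\rangle \leq 0 = \langle \lambda^*, \mathcal{G}(u^*)\rangle$. Combining with convexity of $F$ and \eqref{eq:KKT-operator-1} (applied to the admissible direction $h = u - u^*$) gives $F(u) \geq F(u^*)$, establishing optimality. The main obstacle here is really notational/bookkeeping rather than mathematical: making sure the sign conventions on $\mathcal{K}$ versus $-\mathcal{K}$ and on the multiplier match those of the cited Kuhn--Tucker theorem, so that $\lambda^* \in \mathcal{K}^-$ is the correct polarity matching the inequality $\mathcal{G}(u^*) \in \mathcal{K}$. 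Everything else (differentiability of $\mathcal{G}$, convexity of the problem, the interior-point interpretation of the CQ) is immediate from the affine-linear structure of $\mathcal{G}$ and the cone structure of $\mathcal{K}$.
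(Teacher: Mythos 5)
Your proposal is correct and follows essentially the same route as the paper: both recognize the problem as a convex conic program with affine $\mathcal{G}$, invoke the generalized Kuhn--Tucker theorem of \cite[Theorem 3.6]{Bonnans2013} under the Robinson-type constraint qualification to obtain $\lambda^*\in\mathcal{K}^-$ with stationarity of the Lagrangian and complementarity, and use $D\mathcal{G}(u)h=\mathcal{A}^{-1}\mathcal{B}h$ to arrive at \eqref{eq:KKT-operator-short}. The only cosmetic difference is that you prove sufficiency by the direct convexity inequality, whereas the paper cites \cite[Proposition 3.3]{Bonnans2013}; these are the same argument.
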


\begin{proof}
Let $L(u,\lambda):=F(u)+\langle \lambda, \mathcal{G}(u)\rangle.$ Since $F$ is convex and $\mathcal{G}$ is $(-\mathcal{K})$-convex, the problem \eqref{eq:a.s.-problem} is convex. The constraint qualification implies that the set of Lagrange multipliers is nonempty (see \cite[Theorem 3.6]{Bonnans2013}). In particular, we have that there exists $\lambda^* \in (L_{\pP}^\infty(\Omega,\mathcal{C}(\bar{D})))^*$ such that
\begin{equation}
\label{eq:KKT-simple}
0 \in \partial_u L(u^*,\lambda^*) \quad \text{and} \quad \lambda^* \in N_{\mathcal{K}}(\mathcal{G}(u^*)),
\end{equation}
where $\partial_u$ denotes the partial subdifferential with respect to $u$ in the sense of convex analysis.
The operator $\mathcal{G}\colon L^2(D) \rightarrow L^\infty_{\pP}(\Omega, \mathcal{C}(\bar{D}))$ is affine linear and is bounded thanks to \eqref{eq:apriori-PDE-omega}. Therefore, it is Fr\'echet differentiable with $D\mathcal{G}(u)[h] = \mathcal{A}^{-1}\mathcal{B}h$ for all $h \in L^2(D).$
This means the first condition in \eqref{eq:KKT-simple} simplifies to  \eqref{eq:KKT-operator-1}. The conditions \eqref{eq:KKT-operator-2} follow from the fact that $\mathcal{K}$ is a convex cone; see \cite[p.~150]{Bonnans2013}. Conditions \eqref{eq:KKT-operator-short} are necessary and sufficient by \cite[Proposition 3.3]{Bonnans2013}.
\end{proof}

\begin{remark}
    The constraint qualification $0 \in \textup{int} \{\mathcal{G}(L^2(D))-\mathcal{K} \}$ means that for every $w$ in a neighborhood of zero in $L_{\pP}^\infty(\Omega,\mathcal{C}(\bar{D}))$ there exists a $u\in L^2(D)$ satisfying $\mathcal{G}(u) +w\in \mathcal{K}.$ In particular, there exists a $u$ such that $\mathcal{G}(u)(x,\omega) < 0$ a.e.~in $D$ $\pP$-a.s. If $y=\mathcal{S}(u)$, we obtain the existence of a state that is strictly feasible, i.e., $y(x,\omega) < \alpha$ a.e.~in $D$ $\pP$-a.s. Consider now the value function $v(w):=\inf\{F(u) \mid u \in L^2(D), \mathcal{G}(u)+w \in \mathcal{K} \}$. Then $0 \in \textup{int} \{\mathcal{G}(L^2(D))-\mathcal{K} \}$ is equivalent to continuity of $v$ at $0 \in L^2(D)$, i.e., $0 \in \textup{int dom } v.$ 
    This provides the connection to \cite[Assumption 3.5 (ii)]{Geiersbach2022a}. Note that \cite[Assumption 3.5 (i)]{Geiersbach2022a} ensures that a minimizer exists, something we do not require here.
\end{remark}

It is possible to refine these optimality conditions by taking advantage of the following Yosida--Hewitt-type decomposition result from \cite[Appendix 1, Theorem 3]{Ioffe1972} and \cite{Levin1974}. Let $X$ be a separable Banach space. A continuous linear functional $v \in (L_{\pP}^\infty(\Omega,X))^*$ of the form
\begin{equation}
\label{eq:absolutely-continuous-functionals}
  \langle v, x\rangle = \int_{\Omega} \langle x^*(\omega), x(\omega) \rangle \D \pP(\omega)
\end{equation}
for some { weakly measurable\footnote{ A vector-valued random variable $x^*\colon \Omega \rightarrow X^*$ is said to be weakly measurable if $\omega \mapsto \langle x^*(\omega), x\rangle$ is measurable for every $x \in X$.}  $x^*\colon \Omega \rightarrow  X^*$ with $\lVert x^* \rVert_{L^1_{\pP}(\Omega, X^*)}<\infty$} is called \textit{absolutely continuous}. We call $v^\circ \in (L_{\pP}^\infty(\Omega,X))^*$  \textit{singular (relative to $\pP$)} if there exists a sequence $\{F_n\} \subset  \mathcal{F}$ with $F_{n+1} \subset F_n$ for all $n$ and $\pP(F_n) \rightarrow 0$ as $n\rightarrow \infty$ such that $v^\circ$ is concentrated on $\{ F_n\}$, i.e., $ \langle v^\circ, x\rangle= 0$
for all $x \in L_{\pP}^\infty(\Omega, X)$, that vanish on some $F_n$. { For the following results, we define the sets $\Lambda_\circ(X^*):=\{ v \in (L_{\pP}^\infty(\Omega,X))^* \mid v \text{ is singular}\}$ and $\Lambda_1(X^*):=\{ x^* \colon \Omega \rightarrow X^* \mid x^* \text{ is weakly measurable and } \lVert x^* \rVert_{L_{\pP}^1(\Omega,X^*)}<\infty\}$.}

\begin{theorem}[Ioffe and Levin]
\label{thm:ioffe-levin}
Each functional $v^* \in (L_{\pP}^\infty(\Omega,X))^*$ has a unique decomposition 
\begin{equation*}
\label{eq:decomposition-dual-space}
v^* = v^a + v^\circ,
\end{equation*}
where $v^a$ is absolutely continuous, $v^\circ$ is singular relative to
$\pP$, and 
\begin{equation*}
  \lVert v^* \rVert_{(L_{\pP}^\infty(\Omega,X))^*} = \lVert v^a \rVert_{(L_{\pP}^\infty(\Omega,X))^*}+\lVert v^\circ \rVert_{(L_{\pP}^\infty(\Omega,X))^*}.
\end{equation*}
\end{theorem}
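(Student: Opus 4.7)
The plan is to handle uniqueness separately from existence, reduce the existence part to the classical (scalar) Yosida--Hewitt decomposition by testing against $f\cdot x$ for $f\in L_{\pP}^\infty(\Omega)$ and $x\in X$, assemble an $X^*$-valued density from the scalar ones, and finally verify the norm identity.

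For uniqueness, suppose $v^*=v_1+v_1^\circ=v_2+v_2^\circ$ with $v_i$ absolutely continuous via $x_i^*\in L_{\pP}^1(\Omega,X^*)$ and $v_i^\circ$ singular. Then $v:=v_1-v_2$ is absolutely continuous (via $x^*:=x_1^*-x_2^*$) and also singular (as it equals $v_2^\circ-v_1^\circ$), so it concentrates on some nested $\{F_n\}$ with $\pP(F_n)\to 0$. For every $x\in L_{\pP}^\infty(\Omega,X)$,
\[
\langle v,x\rangle = \langle v,\chi_{F_n}x\rangle = \E[\chi_{F_n}\langle x^*(\cdot),x(\cdot)\rangle].
\]
Dominated convergence (with bound $\|x\|_{L_\pP^\infty(\Omega,X)}\,\|x^*(\cdot)\|_{X^*}\in L_\pP^1(\Omega)$) sends the right-hand side to $0$, so $v=0$ and hence $v_1=v_2$, $v_1^\circ=v_2^\circ$.

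For existence, fix $v^*$ and, for each $x\in X$, define $T_x\in(L_{\pP}^\infty(\Omega))^*$ by $T_x(f):=\langle v^*,f\cdot x\rangle$, which is well-defined since $\|f\cdot x\|_{L_\pP^\infty(\Omega,X)}\le\|f\|_{L_\pP^\infty(\Omega)}\|x\|_X$. Apply the classical scalar Yosida--Hewitt theorem to decompose $T_x=T_x^{a}+T_x^{s}$ with $T_x^{a}$ absolutely continuous relative to $\pP$ and $T_x^{s}$ purely finitely additive, i.e.\ supported on sets of arbitrarily small $\pP$-measure. Radon--Nikodym represents $T_x^{a}$ by a density $h_x\in L_{\pP}^1(\Omega)$ with $\|h_x\|_{L_\pP^1(\Omega)}\le\|v^*\|\,\|x\|_X$, so $x\mapsto h_x$ is a bounded linear map from $X$ into $L_{\pP}^1(\Omega)$. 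The central construction is to realize these densities as the pointwise pairing $h_x(\omega)=\langle x^*(\omega),x\rangle$ with some $x^*\in L_{\pP}^1(\Omega,X^*)$. Exploiting separability of $X$ (which holds in our application with $X=\mathcal{C}(\bar D)$), pick a countable $\mathbb{Q}$-linear dense subset $\{x_k\}\subset X$, select pointwise representatives of the $h_{x_k}$, and remove a common $\pP$-null set so that $x\mapsto h_x(\omega)$ is $\mathbb{Q}$-linear and bounded by $\|v^*\|\,\|x\|_X$ off the null set; density extends this to an element $x^*(\omega)\in X^*$, with Bochner measurability coming from separability of the range. Define $v$ to be the absolutely continuous functional induced by $x^*$ via \eqref{eq:absolutely-continuous-functionals} and set $v^\circ:=v^*-v$.

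What remains is singularity of $v^\circ$ together with the norm identity, and this is where the main obstacle lies: one must produce a \emph{single} nested sequence $\{F_n\}$, independent of $x$, on which $v^\circ$ concentrates. Scalar singularity of each $T_{x_k}^{s}$ provides, for every $k$, a nested sequence $\{F_n^{(k)}\}$, and a Cantor diagonal argument, combined with approximation of arbitrary $x\in L_{\pP}^\infty(\Omega,X)$ by simple functions of the form $\sum_j\chi_{A_j}x_{k_j}$, yields a common $\{F_n\}$ that works uniformly, using continuity of $v^*$ with respect to the $L_{\pP}^\infty(\Omega,X)$-norm to pass the concentration from simple to general functions. For the norm identity, test $v^*$ on unit functions supported in $F_n^c$ (approximating $\|v\|$ while $v^\circ$ contributes nothing in the limit since it concentrates on $F_n$) and, independently, on unit functions supported in $F_n$ (approximating $\|v^\circ\|$ while $v$ contributes vanishingly since $\pP(F_n)\to 0$ and the density is in $L_\pP^1$). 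Summing gives $\|v^*\|\ge\|v\|+\|v^\circ\|$, and the triangle inequality supplies the reverse inequality.
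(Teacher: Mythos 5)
First, a point of reference: the paper does not prove Theorem~\ref{thm:ioffe-levin} at all---it is quoted from Ioffe (1972, Appendix~1, Theorem~3) and Levin (1974)---so your attempt can only be judged on its own merits. Your uniqueness argument is correct (modulo the one-line observation that the difference of two singular functionals is singular with respect to the union of the two defining sequences), and the overall architecture---scalarize against $f\cdot x$, invoke the classical Yosida--Hewitt theorem, reassemble a density using separability of $X$---is essentially the route of the cited references. The existence part, however, contains a step that fails as written. From $\lVert h_x\rVert_{L^1_{\pP}(\Omega)}=\lVert T_x^{a}\rVert\le\lVert v^*\rVert\,\lVert x\rVert_X$ you cannot conclude the \emph{pointwise} bound $|h_x(\omega)|\le\lVert v^*\rVert\,\lVert x\rVert_X$ off a null set: an $L^1$ bound on a density controls nothing pointwise. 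Without a pointwise bound uniform over the countable $\mathbb{Q}$-linear dense set, the map $x_k\mapsto h_{x_k}(\omega)$ need not be continuous for fixed $\omega$, and the extension by density to an element $x^*(\omega)\in X^*$ collapses. The missing idea is a dominating scalar measure: decompose first the positive functional $\theta(f):=\sup\{|\langle v^*,u\rangle| : \lVert u(\omega)\rVert_X\le|f(\omega)|\ \text{a.e.}\}$ by the scalar Yosida--Hewitt theorem, obtain a density $g\in L^1_{\pP}(\Omega)$ for its countably additive part, and use the order-preservation of the scalar decomposition to get $|h_x(\omega)|\le g(\omega)\lVert x\rVert_X$ a.e.; only then does the extension to $x^*(\omega)\in X^*$ with $\lVert x^*(\omega)\rVert_{X^*}\le g(\omega)$ go through, with $g$ supplying the integrable majorant.

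A second genuine problem is the appeal to ``Bochner measurability coming from separability of the range.'' Separability of $X$ does not give separability of $X^*$ (in the paper's application $X=\mathcal{C}(\bar D)$ and $X^*=\mathcal{M}(\bar D)$ is nonseparable), and the assembled density is in general only weak-$*$ measurable: the prototype $\Omega=[0,1]$, $x^*(\omega)=\delta_\omega$, defines a legitimate absolutely continuous functional on $L^\infty_{\pP}(\Omega,\mathcal{C}([0,1]))$ whose density has pairwise distance $2$ on an uncountable set and hence admits no strongly measurable representative. This is precisely why Levin states the result with the absolutely continuous part lying in the space of weak-$*$ measurable $X^*$-valued functions with integrable norm; the notation $L^1_{\pP}(\Omega,X^*)$ in the theorem must be read in that sense, and your proof should target that space rather than the Bochner space, where the statement is actually false. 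Finally, the concentration of $v^\circ$ on a single nested sequence and the norm additivity are only sketched; the diagonalization is the right device, but the scalar theorem only yields \emph{approximate} concentration ($|T^{s}_{x_k}|(\Omega\setminus F)<\varepsilon$ with $\pP(F)<\varepsilon$), so producing exact vanishing on a common sequence $\{F_n\}$ still needs a careful limiting argument.
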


{ Due to this decomposition result, for any element $x \in L_{\pP}^\infty(\Omega, X),$ we have
\[
\langle v^*,x\rangle = \langle v^a,x\rangle + \langle v^\circ, x \rangle = \E[\langle v(\cdot),x(\cdot)\rangle]+\langle v^\circ,x\rangle,
\]
where $v \in \Lambda_1(X^*)$ is the element corresponding to $v^a.$ }
We now have the following result. 
\begin{theorem}
    \label{thm:OC-a.s.}
With the assumptions from Lemma~\ref{lemma:optimality-almost-sure-basic}, for $u^*$ to be a solution to \eqref{eq:a.s.-problem}, it is necessary and sufficient that there exist multipliers $\lambda \in { \Lambda_1(\mathcal{C}(\bar{D})^*)}$ and $\lambda^\circ \in { \Lambda_\circ(\mathcal{C}(\bar{D})^*)}$ with 
\begin{subequations}
\label{eq:KKT-operator-YH}
  \begin{align}
 &  D F(u^*) + \E[A^{-*}\lambda] + \mathcal{B}^*\mathcal{A}^{-*} \lambda^\circ = 0, \label{eq:KKT-YH-1}\\
  &A y^*(\cdot, \omega) - u^* - \tilde{f}(\cdot, \omega) = 0, \label{eq:KKT-YH-2} \\
 &  y^*(\cdot,\omega)\leq \alpha \text{ a.e.~in D}, \quad \lambda(\cdot,\omega) \geq 0 \text{ a.e.~in D }, \quad  \langle \lambda(\cdot,\omega), y^*(\cdot,\omega)-\boldsymbol{\alpha} \rangle = 0, \label{eq:KKT-YH-3} \\
& \lambda^\circ \in  \mathcal{K}^{-}, \quad \langle \lambda^\circ, y^*-\boldsymbol{\alpha}  \rangle = 0, \label{eq:KKT-YH-4}
  \end{align}
\end{subequations} 
with pointwise conditions (in $\omega$) holding $\pP$-a.s.
\end{theorem}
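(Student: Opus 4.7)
The plan is to obtain Theorem~\ref{thm:OC-a.s.} as a refinement of Lemma~\ref{lemma:optimality-almost-sure-basic}: start from the abstract multiplier $\lambda^\ast\in (L_{\pP}^\infty(\Omega,\mathcal{C}(\bar D)))^\ast$ produced there, invoke the Ioffe--Levin Theorem~\ref{thm:ioffe-levin} to write $\lambda^\ast=\lambda^a+\lambda^\circ$ with $\lambda^a$ absolutely continuous (hence identified with some $\lambda\in L_{\pP}^1(\Omega,\mathcal{C}(\bar D)^\ast)$ via \eqref{eq:absolutely-continuous-functionals}) and $\lambda^\circ$ singular, and then unpack each ingredient of \eqref{eq:KKT-operator-short}. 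Equation \eqref{eq:KKT-YH-2} and the first inequality of \eqref{eq:KKT-YH-3} are nothing but the state equation $y^\ast=\mathcal{S}(u^\ast)$ and the primal feasibility $\mathcal{G}(u^\ast)\in\mathcal{K}$; these carry over directly.

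For the stationarity equation \eqref{eq:KKT-YH-1} I would test \eqref{eq:KKT-operator-1} against an arbitrary $h\in L^2(D)$: writing $\langle \lambda^\ast,\mathcal{A}^{-1}\mathcal{B}h\rangle=\langle \lambda^a,\mathcal{A}^{-1}\mathcal{B}h\rangle+\langle \lambda^\circ,\mathcal{A}^{-1}\mathcal{B}h\rangle$ and using the representation of $\lambda^a$ via $\lambda\in L_{\pP}^1(\Omega,\mathcal{C}(\bar D)^\ast)$ together with Fubini, the absolutely continuous part becomes $\langle \E[A^{-\ast}\lambda],h\rangle$, while the singular part is, by definition, $\langle \mathcal{B}^\ast\mathcal{A}^{-\ast}\lambda^\circ,h\rangle$. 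Collecting terms yields \eqref{eq:KKT-YH-1}. This step is routine once the adjoints are carefully interpreted: $A^{-\ast}\lambda(\omega)\in L^2(D)$ (identifying $L^2(D)^\ast$ with $L^2(D)$), and its expectation makes sense because $\lambda\in L_{\pP}^1(\Omega,\mathcal{C}(\bar D)^\ast)$ and $A^{-\ast}$ is bounded.

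The main analytical work lies in splitting the polar-cone and complementarity conditions in \eqref{eq:KKT-operator-2} into their absolutely continuous and singular parts, and this is the step I expect to be the main obstacle because $\mathcal{K}^{-}$ is only a convex cone, not a subspace. I would argue as follows. Let $\{F_n\}\subset\mathcal{F}$ be a decreasing sequence with $\pP(F_n)\to 0$ on which $\lambda^\circ$ is concentrated. For any $y\in\mathcal{K}$ and any $n$, $y\mathbf{1}_{\Omega\setminus F_n}\in\mathcal{K}$, so $\langle \lambda^\ast,y\mathbf{1}_{\Omega\setminus F_n}\rangle\le 0$; since $\langle \lambda^\circ,y\mathbf{1}_{\Omega\setminus F_n}\rangle=0$, this reads $\langle \lambda^a,y\mathbf{1}_{\Omega\setminus F_n}\rangle\le 0$, and dominated convergence (using the $L^1$-integrability of $\omega\mapsto\langle\lambda(\omega),y(\omega)\rangle$) passes to the limit to give $\langle \lambda^a,y\rangle\le 0$, i.e.\ $\lambda^a\in\mathcal{K}^{-}$. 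Applying the analogous argument to $y\mathbf{1}_{F_n}\in\mathcal{K}$ (for which $\langle\lambda^\circ,y\mathbf{1}_{F_n}\rangle=\langle\lambda^\circ,y\rangle$ and $\langle\lambda^a,y\mathbf{1}_{F_n}\rangle\to 0$) yields $\lambda^\circ\in\mathcal{K}^{-}$. Since the two nonpositive numbers $\langle\lambda^a,y^\ast-\psi\rangle$ and $\langle\lambda^\circ,y^\ast-\psi\rangle$ sum to $0$ by \eqref{eq:KKT-operator-2}, each vanishes, which is the complementarity assertion in \eqref{eq:KKT-YH-3}--\eqref{eq:KKT-YH-4}. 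Finally, to convert $\lambda^a\in\mathcal{K}^{-}$ into the pointwise sign condition $\lambda(\cdot,\omega)\ge 0$ a.s., I test against $y(\omega)=-z\mathbf{1}_F(\omega)$ for arbitrary $z\in\mathcal{C}(\bar D)$ with $z\ge 0$ and $F\in\mathcal{F}$, obtaining $\E[\langle\lambda(\omega),z\rangle\mathbf{1}_F]\ge 0$ for all $F$, hence $\langle\lambda(\omega),z\rangle\ge 0$ a.s.\ by a standard selection argument, giving nonnegativity of $\lambda(\cdot,\omega)$ as a functional on $\mathcal{C}(\bar D)$; the same argument applied pointwise in $\omega$ on the set where complementarity holds produces $\langle\lambda(\cdot,\omega),y^\ast(\cdot,\omega)-\psi\rangle=0$ a.s.

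Sufficiency is immediate: given $(\lambda,\lambda^\circ)$ satisfying \eqref{eq:KKT-operator-YH}, define $\lambda^\ast$ by $\langle\lambda^\ast,v\rangle:=\E[\langle\lambda,v\rangle]+\langle\lambda^\circ,v\rangle$; the conditions \eqref{eq:KKT-YH-3}--\eqref{eq:KKT-YH-4} rebuild \eqref{eq:KKT-operator-2}, and \eqref{eq:KKT-YH-1} rebuilds \eqref{eq:KKT-operator-1}, so Lemma~\ref{lemma:optimality-almost-sure-basic} gives optimality of $u^\ast$.
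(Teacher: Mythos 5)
Your proposal is correct and follows essentially the same route as the paper: start from the abstract multiplier of Lemma~\ref{lemma:optimality-almost-sure-basic}, apply the Ioffe--Levin decomposition, split the stationarity equation via the $L^1$-representation of the absolutely continuous part, and separate the polar-cone and complementarity conditions by testing against truncations of $y\in\mathcal{K}$ on the concentrating sequence $\{F_n\}$ (your $y\mathbf{1}_{F_n}$ and $y\mathbf{1}_{\Omega\setminus F_n}$ are exactly the paper's auxiliary functions $y_n$ and their role-reversed counterparts). The only differences are cosmetic: you spell out the pointwise sign condition via indicator tests and state sufficiency explicitly, both of which the paper leaves implicit.
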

\begin{proof}
Fix $h \in L^2(D)$ and let the unique decomposition of $\lambda^*$ be given by $\lambda^*=\lambda^a+\lambda^\circ$. Note first that
$\langle \lambda^*,\mathcal{A}^{-1}\mathcal{B}h\rangle =  \langle \lambda^\circ,\mathcal{A}^{-1}\mathcal{B}h\rangle + \langle \lambda^a,\mathcal{A}^{-1}\mathcal{B}h\rangle$. Letting $\lambda \in { \Lambda_1(\mathcal{C}(\bar{D})^*)}$ be the integrable element identified with $\lambda^a$, and recalling that $\mathcal{B}$ is the identity in $L^2(D)$ for every $\omega$, we have 
\[
\langle \lambda^*,\mathcal{A}^{-1}\mathcal{B}h\rangle = \E[\langle \lambda,A^{-1} h\rangle] + \langle \lambda^\circ,\mathcal{A}^{-1}\mathcal{B}h\rangle = \E[\langle A^{-*}\lambda,h\rangle] + \langle \mathcal{B}^*\mathcal{A}^{-*}\lambda^\circ,h\rangle.
\]
Therefore, \eqref{eq:KKT-operator-1} is equivalent to \eqref{eq:KKT-YH-1}. Condition \eqref{eq:KKT-YH-2} is equivalent to setting $y^* = \mathcal{S}(u^*)$ and the first inequality in \eqref{eq:KKT-YH-3} is evidently equivalent to $\mathcal{G}(u^*) \in \mathcal{K}.$ 

We now observe the decomposition of the condition $\lambda^* \in \mathcal{K}^-$ and claim that since $\langle \lambda^*, y\rangle = \langle \lambda^\circ + \lambda^a, y\rangle \leq 0$ for all $y \in \mathcal{K}$, we must have both
\begin{equation}
\label{eq:KKT-inequalities}
\langle \lambda^\circ, y\rangle \leq 0 \quad \text{and} \quad \langle \lambda^a, y\rangle = \E[\langle \lambda, y\rangle ] \leq 0 \quad \forall y \in \mathcal{K}.
\end{equation}
To show this, we take the sequence of sets $\{F_n\}$ on which $\lambda^\circ$ is concentrated and construct the functions
\begin{equation}
\label{eq:piecewise-in-Fn}
y_n(\cdot,\omega):= \begin{cases}
y(\cdot,\omega), &\quad \omega \in F_n,\\
0, &\quad \omega \in \Omega \backslash F_n
\end{cases} \quad (y \in \mathcal{K}).
\end{equation}
Since $y \leq 0$ a.e.~$\pP$-a.s., $y_n$ also has this property, so it belongs to $\mathcal{K},$ too. We have $\langle \lambda^\circ, y_n - y \rangle = 0$ since $y_n = y$ in $F_n$ for every $n$, and moreover (since the following integrand is bounded and $\pP(F_n) \rightarrow 0$),
\[  \langle \lambda^a,y_n\rangle = \int_{F_n} \langle \lambda(\cdot, \omega), y(\cdot,\omega)\rangle \D \pP(\omega) \rightarrow 0 \quad \textup{as} \quad n \rightarrow \infty.
\]
We conclude that for any $y \in \mathcal{K},$ we have
\[
 \langle \lambda^\circ,y\rangle  = \lim_{n \rightarrow \infty} \langle \lambda^\circ + \lambda^a, y_n\rangle \leq 0.
\]
The inequality $\E[\langle \lambda, y\rangle] \leq 0$ can evidently be shown by reversing the roles of $0$ and $y$ in \eqref{eq:piecewise-in-Fn}. We have shown that \eqref{eq:KKT-inequalities} is true. 

To conclude, the first inequality is equivalent to $\lambda^\circ \in \mathcal{K}^-$; the inequality $\E[\langle \lambda, y \rangle ] \leq 0$ for any $y \in \mathcal{K}$ implies $\lambda(\cdot,\omega) \geq 0$ a.e.~in $D$ $\pP$-a.s. Due to \eqref{eq:KKT-inequalities}, the condition
\[
\langle \lambda^*, \mathcal{G}(u^*)\rangle = \langle \lambda^\circ + \lambda^a, y^* - \boldsymbol{\alpha} \rangle = 0 
\]
implies that both
\begin{equation}
\label{eq:complement-conditions-decomposition}
\langle \lambda^\circ, y^* - \boldsymbol{\alpha}  \rangle = 0 \quad \text{and} \quad \langle\lambda^a, y^* - \boldsymbol{\alpha}  \rangle = \E[\langle \lambda, y^*-\boldsymbol{\alpha} \rangle] = 0
\end{equation}
must be satisfied. 
Finally, since $\langle \lambda(\cdot,\omega),y^*(\cdot,\omega) - \boldsymbol{\alpha} \rangle \leq 0$ $\pP$-a.s., the second condition in \eqref{eq:complement-conditions-decomposition} can only be satisfied if $\langle \lambda(\cdot,\omega), y^*(\cdot,\omega)-\boldsymbol{\alpha} \rangle = 0$ $\pP$-a.s.
\end{proof}

\begin{remark}
    The arguments used in the proof of Theorem \ref{thm:OC-a.s.} greatly simplify the derivation of optimality conditions when compared to \cite{Geiersbach2022a, Rockafellar1976c}. The crucial observation used here is that one can start from the standard result (Lemma~\ref{lemma:optimality-almost-sure-basic}) and directly use the structure of the decomposition to derive the more explicit conditions. Additionally, unlike in \cite{Geiersbach2022a,Geiersbach2021c}, we did not need to require the reflexivity of $X$ in the state space $L_{\pP}^\infty(\Omega,X).$ This suggests that it may be generally possible to relax the assumption of reflexivity in other applications in PDE-constrained optimization under uncertainty.
\end{remark}
The equations \eqref{eq:KKT-operator-YH} are not yet in the form frequently used in PDE-constrained optimization. We can introduce the adjoint variable in \eqref{eq:KKT-YH-1} by defining $p(\cdot,\omega):= A^{-*}\lambda(\cdot,\omega)$ and $p^\circ:=\mathcal{A}^{-*}\lambda^\circ$, from which we obtain the adjoint equations 
\begin{equation}
\label{eq:adjoint-equation}
\mathcal{A}^* p^\circ = \lambda^\circ \quad \text{in } { \Lambda_\circ(\mathcal{C}(\bar{D})^*)} \quad \text{and} \quad A^* p = \lambda \quad \text{in } { \Lambda_1(\mathcal{C}(\bar{D})^*)}.
\end{equation}
Therefore, \eqref{eq:KKT-operator-1} can be expressed in the equivalent form
\begin{equation}
\label{eq:KKT-operator-1-alt}
 DF(u^*) + \E[p] +\mathcal{B}^*p^\circ = 0 \quad \text{in } (L^2(D))^*
\end{equation}
where $p^\circ$ and $p$ satisfy the respective conditions in  \eqref{eq:adjoint-equation}. This allows us to recover conditions analogous to \cite[Lemma 3.9]{Geiersbach2022a}:

\begin{proposition}
    \label{prop:KKT-expanded}
Under the same assumptions as Lemma~\ref{lemma:optimality-almost-sure-basic}, $u^*$ is a solution to \eqref{eq:a.s.-problem} if and only if there exist multipliers $\lambda \in { \Lambda_1(\mathcal{C}(\bar{D})^*)}$ and $\lambda^\circ \in { \Lambda_\circ(\mathcal{C}(\bar{D})^*)}$ as well as adjoint variables $p \in { \Lambda_1(L^2(D)^*)}$ and $p^\circ \in { \Lambda_\circ(L^2(D)^*)}$ such that 
\begin{subequations}
\label{eq:KKT-operator-long}
  \begin{align}
   D F(u^*) +  \E[p] +\mathcal{B}^*p^\circ &= 0, \label{eq:KKT-operator-1'}\\
  \mathcal{A}^* p^\circ - \lambda^\circ &= 0, \label{eq:KKT-operator-2'}\\
  A^*  p(\cdot, \omega) - \lambda(\cdot,\omega) & = 0, \label{eq:KKT-operator-3'}\\
  A y^*(\cdot, \omega) - u^* - \tilde{f}(\cdot, \omega) &= 0,\label{eq:KKT-operator-4'} \\
 \lambda(\cdot,\omega) \geq 0 \text{ a.e.~in D}, \quad & \langle \lambda(\cdot,\omega), y^*(\cdot,\omega)-\boldsymbol{\alpha} \rangle = 0, \label{eq:KKT-operator-5'} \\
y^*(\cdot,\omega)\leq \alpha \text{ a.e.~in D}, \quad \lambda^\circ \in  \mathcal{K}^{-}, \quad &\langle \lambda^\circ, y^*-\boldsymbol{\alpha}  \rangle = 0, \label{eq:KKT-operator-6'}
  \end{align}
\end{subequations} 
with pointwise conditions (in $\omega$) holding $\pP$-a.s.
\end{proposition}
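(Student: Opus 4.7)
The plan is to derive Proposition \ref{prop:KKT-expanded} as essentially a rewriting of Theorem \ref{thm:OC-a.s.}, obtained by introducing the customary adjoint variables. Since the assumptions here coincide with those of Lemma \ref{lemma:optimality-almost-sure-basic}, Theorem \ref{thm:OC-a.s.} already provides a necessary and sufficient characterization of optimality, and I only need to verify that \eqref{eq:KKT-operator-long} is equivalent to \eqref{eq:KKT-operator-YH}.

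For the forward direction, suppose $u^{*}$ solves \eqref{eq:a.s.-problem}. By Theorem \ref{thm:OC-a.s.}, there exist $\lambda \in L_{\pP}^{1}(\Omega,\mathcal{C}(\bar{D})^{*})$ and $\lambda^{\circ} \in (L_{\pP}^{\infty}(\Omega,\mathcal{C}(\bar{D})))^{*}$ satisfying \eqref{eq:KKT-YH-1}--\eqref{eq:KKT-YH-4}. I would introduce the adjoint variables via
\[
p(\cdot,\omega) := A^{-*}\lambda(\cdot,\omega) \quad \pP\text{-a.s.}, \qquad p^{\circ} := \mathcal{A}^{-*}\lambda^{\circ}.
\]
Since $A^{-1} \in \mathcal{L}(L^{2}(D),\mathcal{C}(\bar{D}))$, its Banach adjoint $A^{-*} = (A^{-1})^{*}$ belongs to $\mathcal{L}(\mathcal{C}(\bar{D})^{*}, L^{2}(D)^{*})$, and composition with a Bochner-integrable selection preserves integrability, so $p \in L_{\pP}^{1}(\Omega, L^{2}(D)^{*})$. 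Analogously $p^{\circ} \in (L_{\pP}^{\infty}(\Omega, L^{2}(D)))^{*}$. Applying $A^{*}$ (respectively $\mathcal{A}^{*}$) to the defining equations yields the adjoint equations \eqref{eq:KKT-operator-3'} and \eqref{eq:KKT-operator-2'}. Substituting the identities $\E[A^{-*}\lambda] = \E[p]$ and $\mathcal{B}^{*}\mathcal{A}^{-*}\lambda^{\circ} = \mathcal{B}^{*}p^{\circ}$ into \eqref{eq:KKT-YH-1} yields \eqref{eq:KKT-operator-1'}. The state equation \eqref{eq:KKT-operator-4'} reproduces \eqref{eq:KKT-YH-2}, and the sign, feasibility, and complementarity relations \eqref{eq:KKT-operator-5'}--\eqref{eq:KKT-operator-6'} simply repeat \eqref{eq:KKT-YH-3}--\eqref{eq:KKT-YH-4}.

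For the converse, I would start from a quadruple $(\lambda,\lambda^{\circ},p,p^{\circ})$ satisfying \eqref{eq:KKT-operator-long}. Using the adjoint equations \eqref{eq:KKT-operator-2'}--\eqref{eq:KKT-operator-3'}, I would substitute $\lambda = A^{*}p$ (i.e.\ $p = A^{-*}\lambda$) and $\lambda^{\circ} = \mathcal{A}^{*}p^{\circ}$ back into \eqref{eq:KKT-operator-1'}, obtaining \eqref{eq:KKT-YH-1}. All remaining conditions in \eqref{eq:KKT-operator-YH} are read off directly from \eqref{eq:KKT-operator-4'}--\eqref{eq:KKT-operator-6'}. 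By Theorem \ref{thm:OC-a.s.}, $u^{*}$ is then a solution.

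There is really no hard step here; the content of the proposition lies entirely in Theorem \ref{thm:OC-a.s.}, and Proposition \ref{prop:KKT-expanded} is the customary PDE-constrained-optimization re-expression. The only minor technical point requiring care is the measurability and integrability of $p = A^{-*}\lambda$, which follows because the Banach adjoint of a bounded linear operator is bounded linear and thus preserves Bochner measurability as well as $L^{1}$-integrability of the norm.
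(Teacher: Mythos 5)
Your proposal is correct and follows essentially the same route as the paper, which introduces the adjoint variables $p(\cdot,\omega):=A^{-*}\lambda(\cdot,\omega)$ and $p^\circ:=\mathcal{A}^{-*}\lambda^\circ$ in the paragraph preceding the proposition and reads off \eqref{eq:KKT-operator-long} as an equivalent rewriting of \eqref{eq:KKT-operator-YH}. Your additional remark on the integrability of $p=A^{-*}\lambda$ is a reasonable (if minor) point that the paper leaves implicit.
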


As mentioned at the beginning of this section, the appearance of singular terms in the system \eqref{eq:KKT-operator-long} is inconvenient for numerical approximations. These are typically handled by regularization, even in the deterministic setting (since the corresponding multipliers there are generally only measures). Should additional structure (see Lemma~\ref{asrob}) be available so that the almost sure and robust formulations are equivalent, the KKT conditions derived for the latter case may be helpful in some settings for the design of numerical methods. The KKT conditions for the robust problem will be the subject of the following section.


\subsection{Robust state constraints}
\label{sec:robust-constraints}
In this section, we analyze the following robust counterpart of problem \eqref{eq:a.s.-problem}: 

\begin{subequations}
    \label{eq:robust-problem}
    \begin{alignat}{3}
    \min_{u \in L^2(D)}\, F(u) & & &&  \\
   \text{s.t.}  \quad -\Delta  \hat{y}(x,z)  &=  u(x) + f(x,z), & &&\quad (x,z) \in D \times \Xi, \label{eq:uniform-problem-constraint1} \\
 \hat{y}(x,z) &=0, & &&\quad (x,z) \in \partial D \times \Xi, \label{eq:uniform-problem-constraint2}\\
 \hat{y}(x,z) &\leq \alpha & && (x,z)\in D \times \Xi. \label{eq:uniform-problem-constraint3}
    \end{alignat}
\end{subequations}

We start with a robust version of Lemma~\ref{lem:cont-dep-sol-2}. 
\begin{lemma}
\label{lem:cont-dep-sol-robust}
Suppose Assumption~\ref{ass:PDE-standing} holds and suppose $\Xi \subset \R^m$ is a compact set.
 Then for any $u\in L^2(D)$, there exists a unique solution $\hat{y} \in \mathcal{C}(\bar{D}\times\Xi)$ of \eqref{eq:uniform-problem-constraint1}--\eqref{eq:uniform-problem-constraint2}. Moreover, there exists a constant $C>0$ such that
\begin{equation}
\label{eq:apriori-PDE-robust}
     \norm{\hat{y}}{\mathcal{C}(\bar{D}\times \Xi)}   \leq C(\max_{z \in \Xi}\norm{z}{2} + \norm{u+f_0}{L^2(D)}).
\end{equation}
\end{lemma}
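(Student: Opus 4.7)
The plan is to leverage the affine decomposition \eqref{affinedecomp} and the pointwise estimate already established in Lemma~\ref{lem:cont-dep-sol}, reducing the proof to (i) joint continuity of $\hat y$ on $\bar{D}\times\Xi$ and (ii) a supremum argument over the compact set $\Xi$.

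First, for every fixed $z\in\Xi$, Assumption~\ref{ass:PDE-standing} together with Lemma~\ref{lem:cont-dep-sol} yields the existence of a unique $\hat{y}(\cdot,z)\in H_0^1(D)\cap\mathcal{C}(\bar{D})$ solving \eqref{eq:uniform-problem-constraint1}--\eqref{eq:uniform-problem-constraint2}, together with the pointwise bound \eqref{eq:apriori-PDE-first}. Moreover, by \eqref{affinedecomp} we may write
\[
\hat{y}(x,z)=[S(u,z)](x)=[A^{-1}u](x)+\sum_{i=1}^m z_i [A^{-1}\phi_i](x)+y_0(x),
\]
so that the dependence of $\hat{y}(x,z)$ on $z$ is affine with coefficients $A^{-1}\phi_i, A^{-1}u, y_0\in\mathcal{C}(\bar{D})$.

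Next, I would establish joint continuity of $\hat{y}$ on $\bar{D}\times\Xi$. Given $(x_n,z_n)\to (x,z)$ in $\bar{D}\times\Xi$, I split
\[
|\hat{y}(x_n,z_n)-\hat{y}(x,z)|\leq |\hat{y}(x_n,z_n)-\hat{y}(x_n,z)|+|\hat{y}(x_n,z)-\hat{y}(x,z)|.
\]
The second term tends to zero thanks to $\hat{y}(\cdot,z)\in\mathcal{C}(\bar{D})$. For the first term, the affine representation above gives
\[
|\hat{y}(x_n,z_n)-\hat{y}(x_n,z)|=\Big|\sum_{i=1}^m (z_n^i-z^i)[A^{-1}\phi_i](x_n)\Big|\leq \|z_n-z\|_2\,\Big(\sum_{i=1}^m\|A^{-1}\phi_i\|_{\mathcal{C}(\bar{D})}^2\Big)^{1/2}\to 0.
\]
Hence $\hat{y}\in\mathcal{C}(\bar{D}\times\Xi)$.

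Finally, the a priori estimate \eqref{eq:apriori-PDE-robust} is a direct consequence of \eqref{eq:apriori-PDE-first}: since $\Xi$ is compact, the function $z\mapsto\|z\|_2$ attains its maximum on $\Xi$, and
\[
\|\hat{y}\|_{\mathcal{C}(\bar{D}\times\Xi)}=\max_{z\in\Xi}\|\hat{y}(\cdot,z)\|_{\mathcal{C}(\bar{D})}\leq \max_{z\in\Xi} C(\|z\|_2+\|u+f_0\|_{L^2(D)}) = C\Big(\max_{z\in\Xi}\|z\|_2+\|u+f_0\|_{L^2(D)}\Big),
\]
with $C$ the constant from Lemma~\ref{lem:cont-dep-sol}. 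There is no genuine obstacle here; the only step requiring a small argument is the joint continuity, which nevertheless reduces to the structural fact that $\hat{y}$ is an affine combination, in $z$, of finitely many $\mathcal{C}(\bar{D})$ functions.
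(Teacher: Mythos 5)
Your proof is correct and follows essentially the same route as the paper: both reduce to Lemma~\ref{lem:cont-dep-sol} for the per-$z$ solution and bound, establish continuity of $z\mapsto\hat{y}(\cdot,z)$, and then pass to the supremum over the compact set $\Xi$. The only cosmetic difference is that the paper invokes the identification $\mathcal{C}(\Xi,\mathcal{C}(\bar{D}))\cong\mathcal{C}(\bar{D}\times\Xi)$ for compact factors, whereas you verify joint continuity directly by a sequential splitting argument using the affine structure in $z$ --- which amounts to unpacking that same identification by hand.
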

\begin{proof}
From Lemma~\ref{lem:cont-dep-sol}, we have
\[
\norm{\hat{y}}{\mathcal{C}(\Xi,\mathcal{C}(\bar{D}))}=\sup_{z \in \Xi} \norm{\hat{y}(\cdot,z)}{\mathcal{C}(\bar{D})} \leq C (\sup_{z \in \Xi}\norm{z}{2} + \norm{u+f_0}{L^2(D)}) < \infty
\]
and the continuity of the mapping $z \mapsto \hat{y}(\cdot,z)$, meaning $\hat{y} \in \mathcal{C}(\Xi,\mathcal{C}(\bar{D})).$ After identifying $\mathcal{C}(\Xi, \mathcal{C}(\Bar{D}))$ with $\mathcal{C}(\bar{D}\times \Xi)$, which is justified since $\Xi$ and $\bar{D}$ are compact sets (cf.~\cite[p.~50]{Ryan2002}), we have shown \eqref{eq:apriori-PDE-robust}. 
\end{proof} 

\noindent Lemma~\ref{lem:cont-dep-sol-robust} justifies the definition of a new control-to-state operator $\hat{\mathcal{S}}\colon L^2(D) \rightarrow \mathcal{C}(\bar{D}\times \Xi)$ by
\[
\mathcal{\hat{S}}(u) = \mathcal{\hat{A}}^{-1}(\hat{\mathcal{B}}u + f),
\]
where $f \in \mathcal{C}(\Xi,L^2(D))$, $\hat{\mathcal{B}}$ is the continuous injection that maps each element of $L^2(D)$ to itself in $\mathcal{C}(\Xi, L^2(D))$, and $\hat{\mathcal{A}}^{-1}\colon \mathcal{C}(\Xi,L^2(D)) \rightarrow \mathcal{C}(\bar{D} \times \Xi)$ is defined by $(\hat{\mathcal{A}}^{-1} y)(\cdot,z) = A^{-1}y(\cdot,z).$ Analogously to the previous section, we introduce $\hat{\mathcal{G}} \colon L^2(D) \rightarrow \mathcal{C}(\bar{D}\times \Xi)$ by $[\hat{\mathcal{G}}(u)](x,z)=[\hat{\mathcal{S}}(u)](x,z)-\alpha$, the (convex and closed) cone $\hat{\mathcal{K}}:=\{y \in \mathcal{C}(\bar{D}\times \Xi) : y(x,z)\leq 0 \text{ in } D\times \Xi\}$, and the polar cone $\hat{\mathcal{K}}^{-}:=\{ y^* \in \mathcal{C}(\bar{D}\times \Xi)^*: \langle y^*,y\rangle \leq 0 \, \forall y \in \hat{\mathcal{K}}\}.$ 
\begin{lemma}
With the same assumptions as in Lemma~\ref{lem:cont-dep-sol-robust}, let $F$ be convex and continuously Fr\'echet differentiable on $L^2(D)$. Assume the constraint qualification $0 \in \textup{int} \{\hat{\mathcal{G}}(L^2(D))-\hat{\mathcal{K}} \}$ holds. Then, for $u^*$ to be a solution to \eqref{eq:robust-problem}, it is necessary and sufficient that there exists some multiplier $\lambda^* \in \mathcal{C}(\bar{D}\times \Xi)^*$ with
\begin{subequations}
\label{KKT-operator-combined}
  \begin{align}
 & \langle D F(u^*),h\rangle + \langle \lambda^*,\hat{\mathcal{A}}^{-1}\hat{\mathcal{B}}h\rangle = 0 \quad  \forall h \in L^2(D), \label{eq:KKT-operator-11}\\
 & \hat{\mathcal{G}}(u^*) \in \hat{\mathcal{K}}, \quad \lambda^* \in  \hat{\mathcal{K}}^{-}, \quad \langle \lambda^*, \hat{\mathcal{G}}(u^*) \rangle = 0. \label{eq:KKT-operator-22}
  \end{align}
\end{subequations} 
\end{lemma}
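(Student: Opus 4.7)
The plan is to mirror the proof of Lemma~\ref{lemma:optimality-almost-sure-basic} almost verbatim, substituting the parameter-space operators $\hat{\mathcal{A}}^{-1}$, $\hat{\mathcal{B}}$, $\hat{\mathcal{S}}$, $\hat{\mathcal{G}}$ and the cone $\hat{\mathcal{K}}\subset\mathcal{C}(\bar D\times\Xi)$ for their $\pP$-almost-sure counterparts. Concretely, I would introduce the Lagrangian
\[
L(u,\lambda):=F(u)+\langle\lambda,\hat{\mathcal{G}}(u)\rangle,\qquad (u,\lambda)\in L^2(D)\times\mathcal{C}(\bar D\times\Xi)^*,
\]
and verify that problem \eqref{eq:robust-problem} is a convex optimization problem in the standard Banach-space sense: the objective $F$ is convex by assumption, and the constraint mapping $\hat{\mathcal{G}}$ is $(-\hat{\mathcal{K}})$-convex since it is affine linear as the composition $\hat{\mathcal{A}}^{-1}(\hat{\mathcal{B}}\cdot+f)-\alpha$ of the bounded linear maps $\hat{\mathcal{A}}^{-1}$ and $\hat{\mathcal{B}}$ with an additive constant (so in fact the above inclusion holds with equality).

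Next, I would invoke \cite[Theorem 3.6]{Bonnans2013} to extract, from the Robinson-type constraint qualification $0\in\operatorname{int}\{\hat{\mathcal{G}}(L^2(D))-\hat{\mathcal{K}}\}$, the existence of a Lagrange multiplier $\lambda^*\in\mathcal{C}(\bar D\times\Xi)^*$ satisfying
\[
0\in\partial_u L(u^*,\lambda^*),\qquad \lambda^*\in N_{\hat{\mathcal{K}}}(\hat{\mathcal{G}}(u^*)).
\]
The a priori estimate \eqref{eq:apriori-PDE-robust} shows that $\hat{\mathcal{G}}\colon L^2(D)\to\mathcal{C}(\bar D\times\Xi)$ is bounded affine linear, hence Fréchet differentiable with $D\hat{\mathcal{G}}(u)[h]=\hat{\mathcal{A}}^{-1}\hat{\mathcal{B}}h$; combining this with the Fréchet differentiability of $F$ reduces the first condition to \eqref{eq:KKT-operator-11}. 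Since $\hat{\mathcal{K}}$ is a convex closed cone, the normal cone condition $\lambda^*\in N_{\hat{\mathcal{K}}}(\hat{\mathcal{G}}(u^*))$ unpacks in the usual way (see \cite[p.~150]{Bonnans2013}) into the triple $\hat{\mathcal{G}}(u^*)\in\hat{\mathcal{K}}$, $\lambda^*\in\hat{\mathcal{K}}^-$, $\langle\lambda^*,\hat{\mathcal{G}}(u^*)\rangle=0$ stated in \eqref{eq:KKT-operator-22}. Finally, sufficiency (and the fact that necessity and sufficiency coincide) follows from \cite[Proposition 3.3]{Bonnans2013} applied to the convex problem.

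I do not expect any real obstacle here: the argument is a direct transcription of the almost-sure proof, because the only structural ingredients used there were affine linearity and boundedness of the control-to-state map together with the Robinson constraint qualification, and both are available in the present parameter-dependent setting thanks to Lemma~\ref{lem:cont-dep-sol-robust}. The only point that deserves a brief explicit word is the identification $\mathcal{C}(\Xi,\mathcal{C}(\bar D))\cong\mathcal{C}(\bar D\times\Xi)$ on which the definitions of $\hat{\mathcal{G}}$, $\hat{\mathcal{K}}$ and the dual pairing rely; this is precisely the identification used in the proof of Lemma~\ref{lem:cont-dep-sol-robust} and is valid because $\bar D$ and $\Xi$ are compact.
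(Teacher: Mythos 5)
Your proposal is correct and coincides with the paper's proof, which simply states that the argument is identical to that of Lemma~\ref{lemma:optimality-almost-sure-basic}; you have spelled out exactly the steps that transcription requires (convexity of the problem, \cite[Theorem 3.6]{Bonnans2013} under the Robinson constraint qualification, differentiability of the affine constraint map, and \cite[Proposition 3.3]{Bonnans2013} for sufficiency). The remark on the identification $\mathcal{C}(\Xi,\mathcal{C}(\bar D))\cong\mathcal{C}(\bar D\times\Xi)$ is a welcome, if optional, addition.
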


\begin{proof}
The proof follows identical arguments to those used in Lemma \ref{lemma:optimality-almost-sure-basic}.
\end{proof}

The dual space $\mathcal{C}(\bar{D}\times \Xi)^*$ can be identified with the set $\mathcal{M}(\bar{D}\times \Xi)$ of all regular Borel measures\footnote{A regular Borel measure is in this context the same as a Radon measure due to the compactness of $\Bar{D}\times \Xi$; see \cite[p.~152]{Alt2012}.} $\mu$ supported on $\bar{D}\times \Xi$ by means of
\[ 
\langle r^*, r\rangle = \int_{\bar{D}\times \Xi} r(x,z) \D \mu(x,z).
\]
This means that there exists a measure $\mu^*$ such that
\[
\langle \lambda^*, \hat{\mathcal{A}}^{-1}\hat{\mathcal{B}}h\rangle = \int_{\bar{D}\times \Xi} [ \hat{\mathcal{A}}^{-1}\hat{\mathcal{B}}h](x,z) \D \mu^*(x,z)  = \int_{\bar{D}\times \Xi} [ A^{-1}h](x) \D \mu^*(x,z).
\]
Moreover, the fact that $\lambda^*$ belongs to the set $\hat{\mathcal{K}}^-$ implies for the associated measure $\mu^*$ that $\int_{\Bar{D}\times \Xi } r(x,z)\D \mu^*(x,z) \leq 0$ for all $r \in \mathcal{C}(\bar{D}\times \Xi)$ such that $r \leq 0$ a.e.~in $\bar{D}\times \Xi$. This implies the nonnegativity of $\mu^*.$ Using $y^*:=\hat{\mathcal{S}}(u^*)$ and the definition of $\hat{\mathcal{G}}$, \eqref{KKT-operator-combined} can be refined to the conditions 
\begin{subequations}
\label{eq:KKT-operator-identif}
    \begin{align}
        &\langle D F(u^*),h \rangle  + \int_{\Bar{D}\times\Xi} [A^{-1}h](x) \D \mu^*(x,z) = 0 \quad \forall h \in L^2(D),\label{eq:KKT-operator-identif-measures1}\\
        & y^*(x,z) \leq \alpha \quad \forall (x,z)\in \Bar{D}\times\Xi, \quad \int_{\Bar{D}\times \Xi} (y^*(x,z)  -\alpha) \D \mu^*(x,z) = 0, \label{eq:KKT-operator-identif-measures2} \\
         &\mu^* \in \mathcal{M}(\Bar{D}\times\Xi) \text{ nonnegative}.\label{eq:KKT-operator-identif-measures3} 
    \end{align}
\end{subequations}


\paragraph{Connection to semi-infinite programming.}
Up until now in Section~\ref{sec:almost-sure}, we have used a functional-analytic approach to derive optimality conditions. It is worth mentioning, however, that one can use arguments from semi-infinite programming to arrive at the equivalent conditions for the problem
\[
\min_{u \in L^2(D)} F(u) \quad \text{s.t.} \quad g(u):=\max_{(x,z) \in D \times \Xi} [\hat{\mathcal{S}}(u)](x,z) - \alpha \leq 0.
\]
We define the Lagrange function $\hat{L}(u,\lambda):=F(u)+\lambda g(u)$ with $\lambda \in \R.$ If there exists a $u$ such that $g(u) < 0$, then the set of Lagrange multipliers is nonempty. Then a feasible point $u^*$ is a solution if and only if there exists $\lambda^* \geq 0$ such that 
\[
0 \in \partial_u L(u^*,\lambda^*)=DF(u^*)+\lambda^*\partial g(u^*) \quad \text{and} \quad \lambda^* g(u^*) = 0.
\]
We will give a couple of equivalent characterizations of $\partial g(u^*)$. Let $g_{x,z}(u):=[\hat{\mathcal{S}}(u)](x,z)-\alpha.$
First, we consider the viewpoint of Clarke \cite[Section 2.8]{clarke1983}, where this subdifferential can be represented by the set
\[
\partial g(u^*) = \left\lbrace \int_{\bar{D}\times \Xi} \partial g_{x,z}(u^*) \D \mu(x,z) \, \Big\vert\, \mu \text{ is a Radon probability measure supported on $M(u^*)$}  \right\rbrace,
\]
and 
\[M(u):=\{ (x,z) \in \Bar{D}\times \Xi : [\hat{\mathcal{S}}(u)](x,z) = \max_{(x',z')\in \bar{D}\times \Xi} [\hat{\mathcal{S}}(u)](x',z') \}.\]
Since $\partial g_{x,z}(u) = Dg_{x,z}(u) = [\hat{\mathcal{A}}^{-1}\hat{\mathcal{B}} (\cdot )](x,z) = [A^{-1}(\cdot)](x)$, we have that there exists a Radon probability measure $\Tilde{\mu}$ supported on $M(u^*)$ such that
\begin{subequations}
\label{eq:KKT-operator-Clarke}
    \begin{align}
        &\langle DF(u^*),h\rangle + \lambda^* \int_{\Bar{D}\times\Xi} [A^{-1}h](x) \D \tilde{\mu}(x,z) = 0\quad \forall h \in L^2(D),\label{eq:KKT-operator-1''}\\
        & \lambda^* \geq 0, \quad \max_{(x,z)\in \Bar{D}\times \Xi}y^*(x,z) \leq \alpha, \quad \lambda^* \left(\max_{(x,z)\in \Bar{D}\times \Xi}y^*(x,z)  -\alpha\right) = 0, \label{eq:KKT-operator-2''}
    \end{align}
\end{subequations}

\begin{lemma}
\label{lem:equivalence-Bonnans-Clarke}
The conditions \eqref{eq:KKT-operator-identif} are  equivalent to those in \eqref{eq:KKT-operator-Clarke}. 
\end{lemma}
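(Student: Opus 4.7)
The plan is to construct a bijection (up to a normalization) between the multipliers of the two systems. The idea is that a nonnegative Radon measure $\mu^*$ on $\bar D\times\Xi$ can be written as $\mu^* = \lambda^*\tilde\mu$, where $\lambda^*:=\mu^*(\bar D\times\Xi)\ge 0$ is the total mass and $\tilde\mu$ is (when $\lambda^*>0$) a probability measure. The nontrivial points are (i) that the complementarity conditions match and (ii) that in the reverse direction $\tilde\mu$ is actually supported on the active set $M(u^*)$.

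First I would prove $\eqref{eq:KKT-operator-Clarke}\Rightarrow\eqref{eq:KKT-operator-identif}$. Given $(\lambda^*,\tilde\mu)$ satisfying \eqref{eq:KKT-operator-Clarke}, set $\mu^*:=\lambda^*\tilde\mu\in\mathcal M(\bar D\times\Xi)$; nonnegativity of $\mu^*$ and the identity \eqref{eq:KKT-operator-identif-measures1} follow directly from \eqref{eq:KKT-operator-1''}. For the complementarity part of \eqref{eq:KKT-operator-identif-measures2}, note that $\tilde\mu$ is a probability measure supported on $M(u^*)$, so $y^*\equiv \max_{\bar D\times\Xi} y^*$ on $\operatorname{supp}\tilde\mu$, and thus
\[
\int_{\bar D\times\Xi}(y^*(x,z)-\alpha)\,d\mu^*(x,z)
=\lambda^*\Bigl(\max_{\bar D\times\Xi} y^* - \alpha\Bigr)=0
\]
by \eqref{eq:KKT-operator-2''}. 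The pointwise inequality $y^*\le\alpha$ is identical in both formulations (continuity of $y^*$ identifies the essential sup with the sup).

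For the converse $\eqref{eq:KKT-operator-identif}\Rightarrow\eqref{eq:KKT-operator-Clarke}$, set $\lambda^*:=\mu^*(\bar D\times\Xi)\ge 0$. If $\lambda^*=0$, then $\mu^*=0$, so \eqref{eq:KKT-operator-identif-measures1} gives $DF(u^*)=0$, and any probability measure $\tilde\mu$ supported on the nonempty compact set $M(u^*)$ (nonempty because the continuous function $y^*$ attains its maximum on the compact set $\bar D\times\Xi$) will verify \eqref{eq:KKT-operator-Clarke} trivially. If $\lambda^*>0$, put $\tilde\mu:=\mu^*/\lambda^*$; this is a Radon probability measure, and \eqref{eq:KKT-operator-1''} is immediate. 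The main obstacle is to show that $\operatorname{supp}\tilde\mu\subseteq M(u^*)$ and that $\lambda^*(\max y^*-\alpha)=0$. Both follow from the following observation: since $y^*-\alpha\le 0$ on $\bar D\times\Xi$ while $\int(y^*-\alpha)\,d\mu^*=0$, the nonpositive continuous integrand must vanish on $\operatorname{supp}\mu^*$, i.e.\ $y^*\equiv\alpha$ on $\operatorname{supp}\mu^*=\operatorname{supp}\tilde\mu$. Because $\lambda^*>0$, the support is nonempty, hence $\max_{\bar D\times\Xi} y^* = \alpha$, which gives the complementarity in \eqref{eq:KKT-operator-2''}. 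Moreover $\{y^*=\alpha\}=\{y^*=\max y^*\}=M(u^*)$ is closed by continuity of $y^*$, so $\operatorname{supp}\tilde\mu\subseteq M(u^*)$, completing the verification.
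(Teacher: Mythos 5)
Your proof is correct and follows essentially the same route as the paper: normalize $\mu^*$ by its total mass to pass between the two multiplier formats, and use the nonnegativity of the measure together with the vanishing of $\int_{\bar{D}\times\Xi}(y^*-\alpha)\D\mu^*$ and the continuity of $y^*$ to conclude that the support lies in the active set $M(u^*)$. Your explicit treatment of the degenerate case $\mu^*(\bar{D}\times\Xi)=0$, where the normalization is undefined, is a small point the paper's proof passes over silently.
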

\begin{proof}
Let us assume we have \eqref{eq:KKT-operator-identif} and we set $\mu^*=:\lambda^*\Tilde{\mu}$ such that $\tilde{\mu}= (\mu^*(\bar{D}\times\Xi))^{-1} \mu^*$ and $\lambda^* = \mu^*(\bar{D}\times\Xi).$ Evidently, we have
$\tilde{\mu}(\bar{D}\times \Xi)=1$, making $\tilde{\mu}$ is a Radon probability measure and $\lambda^* \geq 0$ since $\mu^*$ is nonnegative. This construction immediately yields \eqref{eq:KKT-operator-1''}. Now, the integral condition in \eqref{eq:KKT-operator-identif-measures2} becomes $\lambda^* \int_{\bar{D} \times \Xi} (y^*(x,z)-\alpha) \D \tilde{\mu}(x,z) = 0.$ On account of the nonnegativity of $\tilde{\mu},$ the integrand can only be equal to zero if $y^*(x,z)=\alpha$ on a set of positive measure, and this in turn only occurs on those points satisfying $y^*(x,z) = \max_{(x',z')\in \bar{D}\times \Xi}y^*(x',z').$ In particular, this means that $\tilde{\mu}$ is supported on $M(u^*)$ and we obtain the final condition in \eqref{eq:KKT-operator-2''} due to
\begin{align*}
\lambda^* \int_{\bar{D} \times \Xi} (y^*(x,z)-\alpha) \D \tilde{\mu}(x,z) &= \lambda^* \int_{\bar{D} \times \Xi} \left(\max_{(x',z')\in \bar{D} \times \Xi}y^*(x',z')-\alpha \right) \D \tilde{\mu}(x,z) \\
&= \lambda^*\left(\max_{(x',z') \in \bar{D} \times \Xi}y^*(x',z')-\alpha\right)=0.
\end{align*}

On the other hand, suppose we have \eqref{eq:KKT-operator-Clarke}. It is clear that $\mu^*:=\lambda^*\Tilde{\mu}$ is a nonnegative Radon measure (and hence a nonnegative regular Borel measure). One can use identical arguments to those above to reconstruct the conditions \eqref{eq:KKT-operator-identif}. 
\end{proof}

We move on to another representation of the optimality conditions. The Ioffe-Tikhomirov Theorem \cite[Theorem 4.2.3]{Ioffe-Tikhomirov} yields the following representation of the convex subdifferential of $g$:
\[
\partial g(u^*)={\rm clco}\{\partial g_{x,z}(u^*)\mid (x,z)\in M(u^*)\},
\]
where ``clco'' denotes the weak closure (weak-star closure in general Banach spaces) in $U$. Recall that $\partial g_{x,z}(u^*) = [A^{-1} (\cdot )](x)$.
Then we have the optimality conditions \eqref{eq:KKT-operator-2''} with
\begin{equation}
\label{eq:KKT-operator-Ioffe-Tikhomirov}
        \langle DF(u^*),h\rangle + \lambda^* \textup{co}\lbrace [A^{-1}h](x) \mid (x,z) \in M(u^*) \rbrace \ni 0 \quad \forall h \in L^2(D).
\end{equation}
Here, ``clco'' could be replaced by ``co'' since the (finite-dimensional) set $\{ [A^{-1}h](x) \mid (x,z) \in M(u^*)\}$ is compact by the compactness of $M(u^*)$ and  $\hat{\mathcal{A}}^{-1}\hat{\mathcal{B}}h \in \mathcal{C}(\bar{D}\times \Xi)$. We have the following:
\begin{lemma}
    \label{lem:equivalence-Clarke-Ioffe-Tikhomirov}
The condition \eqref{eq:KKT-operator-Ioffe-Tikhomirov} is equivalent to the existence of a Radon probability measure $\Tilde{\mu}$ such that \eqref{eq:KKT-operator-1''} holds.
\end{lemma}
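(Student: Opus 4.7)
I would establish the equivalence by two implications, exploiting the elementary fact that for each fixed $h \in L^2(D)$, the set $S_h := \{[A^{-1}h](x) \mid (x,z) \in M(u^*)\}$ is a compact subset of $\mathbb{R}$, being the continuous image of the compact set $M(u^*)$ under the continuous map $(x,z) \mapsto [A^{-1}h](x)$. Consequently, $\mathrm{co}(S_h) = [\min S_h,\max S_h]$ is a closed interval.

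For the direction \eqref{eq:KKT-operator-1''} $\Rightarrow$ \eqref{eq:KKT-operator-Ioffe-Tikhomirov}, I would simply observe that for any Radon probability measure $\tilde{\mu}$ supported on $M(u^*)$, the integral $\int [A^{-1}h](x)\,\D\tilde{\mu}(x,z)$ is a weighted average of values in $S_h$, so it lies in $[\min S_h,\max S_h] = \mathrm{co}(S_h)$. Substituting into \eqref{eq:KKT-operator-1''} then yields \eqref{eq:KKT-operator-Ioffe-Tikhomirov}.

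For the reverse direction, I would first dispose of the trivial case $\lambda^* = 0$, which forces $DF(u^*) = 0$ and makes any Radon probability measure on $M(u^*)$ work. Assuming $\lambda^* > 0$, set $T := -(\lambda^*)^{-1} DF(u^*) \in L^2(D)^*$; condition \eqref{eq:KKT-operator-Ioffe-Tikhomirov} translates into $\langle T,h\rangle \in [\min S_h,\max S_h]$ for every $h$. By a Hahn--Banach separation argument in $L^2(D)^*$, this pointwise inclusion is equivalent to $T$ belonging to the closed convex hull of the set $\Phi(M(u^*))$, where $\Phi \colon M(u^*) \to L^2(D)^*$ is defined by $\Phi(x,z) := u_x$ with $\langle u_x,h\rangle = [A^{-1}h](x)$. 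Norm-continuity of $\Phi$ (and hence norm-compactness of $\Phi(M(u^*))$) follows from the H\"older regularity $A^{-1} \colon L^2(D) \to \mathcal{C}^{0,\alpha}(\bar{D})$ recalled after Lemma~\ref{lem:cont-dep-sol}, which gives a uniform bound $\|u_{x_1} - u_{x_2}\|_{L^2(D)^*} \leq C|x_1 - x_2|^\alpha$. Invoking then a standard barycentric representation theorem for points in the closed convex hull of a compact subset of a Banach space (e.g., via Choquet--Milman, noting that $L^2(D)^*$ is reflexive so weak and weak-$*$ closures coincide), one obtains a Radon probability measure $\tilde{\mu}$ on $M(u^*)$ such that $\langle T,h\rangle = \int [A^{-1}h](x)\,\D\tilde{\mu}(x,z)$ for all $h \in L^2(D)$, which rearranges to \eqref{eq:KKT-operator-1''}.

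The main obstacle lies in the reverse implication: promoting the scalar-valued pointwise inclusion \eqref{eq:KKT-operator-Ioffe-Tikhomirov} to a single integral representation against one probability measure. This requires combining Hahn--Banach separation (to interpret the pointwise bounds as membership in a closed convex hull in $L^2(D)^*$) with a Choquet-type barycentric representation, the latter being applicable thanks to the norm-compactness of $\Phi(M(u^*))$ inherited from the H\"older regularity of the elliptic solution operator.
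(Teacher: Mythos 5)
Your proof is correct and rests on the same key tool as the paper's: the barycentric (Choquet--Phelps) representation of points in the closed convex hull of a compact subset of a locally convex space, which the paper invokes via \cite[Proposition 1.2]{Phelps2001}. Your version makes explicit two steps that the paper's one-line citation leaves implicit---the Hahn--Banach/support-function argument promoting the pointwise-in-$h$ scalar inclusion \eqref{eq:KKT-operator-Ioffe-Tikhomirov} to membership of $-(\lambda^*)^{-1}DF(u^*)$ in $\mathrm{clco}\{u_x \mid (x,z)\in M(u^*)\}\subseteq L^2(D)^*$, and the norm-compactness of that set via the H\"older regularity of $A^{-1}$---but the underlying route is the same.
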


\begin{proof}
Let $S:=\{ [A^{-1}h](x) \mid (x,z) \in M(u^*)\}$. 
From \cite[Proposition 1.2]{Phelps2001} we have that a point $s$ is an element of $\mathrm{clco}(S)$ if and only if there is a Radon probability measure on $S$ (i.e., $\mu$ is supported on $M(u^*)$) representing $s$, meaning in this case that we have 
\[
f(s)=\int_{\bar{D}\times \Xi} f(x,z) \D \mu(x,z) \quad \forall f\in \mathcal{C}(\R^{d+m}).
\]
Choosing $f=A^{-1}h$, we obtain the equivalence of the formulations \eqref{eq:KKT-operator-Ioffe-Tikhomirov} and \eqref{eq:KKT-operator-1''}.
\end{proof}

\bibliographystyle{abbrv}
\bibliography{references}

\begin{thebibliography}{55}
\providecommand{\natexlab}[1]{#1}
\providecommand{\url}[1]{\texttt{#1}}
\expandafter\ifx\csname urlstyle\endcsname\relax
  \providecommand{\doi}[1]{doi: #1}\else
  \providecommand{\doi}{doi: \begingroup \urlstyle{rm}\Url}\fi

\bibitem[Ackooij et~al.(2021)Ackooij, Demassey, Javal, Morais, Oliveira, and
  Swaminathan]{ackooijdc}
W.~Ackooij, S.~Demassey, P.~Javal, H.~Morais, W.~Oliveira, and B.~Swaminathan.
\newblock {A bundle method for nonsmooth DC programming with application to
  chance-constrained problems}.
\newblock \emph{Computational Optimization and Applications}, 78:\penalty0
  451--490, 2021.
\newblock URL
  \url{https://ideas.repec.org/a/spr/coopap/v78y2021i2d10.1007_s10589-020-00241-8.html}.

\bibitem[Alphonse et~al.(2022)Alphonse, Geiersbach, Hinterm{\"u}ller, and
  Surowiec]{alphonse2022risk}
A.~Alphonse, C.~Geiersbach, M.~Hinterm{\"u}ller, and T.~M. Surowiec.
\newblock Risk-averse optimal control of random elliptic variational
  inequalities.
\newblock \emph{arXiv preprint arXiv:2210.03425}, 2022.

\bibitem[Alt(2012)]{Alt2012}
H.~W. Alt.
\newblock \emph{Lineare Funktionalanalysis}.
\newblock Springer, Berlin, Heidelberg, 2012.
\newblock URL \url{https://doi.org/10.1007/978-3-662-08386-4}.

\bibitem[Barrera et~al.(2016)Barrera, Homem-de Mello, Moreno, Pagnoncelli, and
  Canessa]{Barrera2016}
J.~Barrera, T.~Homem-de Mello, E.~Moreno, B.~K. Pagnoncelli, and G.~Canessa.
\newblock Chance-constrained problems and rare events: an importance sampling
  approach.
\newblock \emph{Mathematical Programming}, 157:\penalty0 153--189, 2016.
\newblock URL \url{https://doi.org/10.1007/s10107-015-0942-x}.

\bibitem[Berthold et~al.(2022)Berthold, Heitsch, Henrion, and
  Schwientek]{Berthold2022}
H.~Berthold, H.~Heitsch, R.~Henrion, and J.~Schwientek.
\newblock On the algorithmic solution of optimization problems subject to
  probabilistic/robust (probust) constraints.
\newblock \emph{Math. Oper. Res.}, 96\penalty0 (1):\penalty0 1--37, 2022.
\newblock URL \url{https://doi.org/10.1080/02331934.2019.1576670}.

\bibitem[Bonnans and Shapiro(2013)]{Bonnans2013}
J.~F. Bonnans and A.~Shapiro.
\newblock \emph{Perturbation analysis of optimization problems}.
\newblock Springer Science \& Business Media, 2013.
\newblock URL \url{https://doi.org/10.1007/978-1-4612-1394-9}.

\bibitem[Caillau et~al.(2018)Caillau, Cerf, Sassi, Trélat, and
  Zidani]{caillau2018}
J.-B. Caillau, M.~Cerf, A.~Sassi, E.~Trélat, and H.~Zidani.
\newblock Solving chance constrained optimal control problems in aerospace via
  kernel density estimation.
\newblock \emph{Optim. Control Appl. Methods}, 39\penalty0 (5):\penalty0
  1833--1858, 2018.
\newblock URL \url{https://doi.org/10.1002/oca.2445}.

\bibitem[Campi and Garatti(2008)]{Campi2008}
M.~C. Campi and S.~Garatti.
\newblock The exact feasibility of randomized solutions of uncertain convex
  programs.
\newblock \emph{SIAM J. Optim.}, 19\penalty0 (3):\penalty0 1211--1230, 2008.
\newblock URL \url{https://doi.org/10.1137/07069821X}.

\bibitem[Charnes et~al.(1958)Charnes, Cooper, and Symonds]{charnes1958}
A.~Charnes, W.~W. Cooper, and G.~H. Symonds.
\newblock Cost horizons and certainty equivalents: An approach to stochastic
  programming of heating oil.
\newblock \emph{Manage. Sci}, 4\penalty0 (3):\penalty0 235--263, 1958.
\newblock URL \url{https://doi.org/10.1287/mnsc.4.3.235}.

\bibitem[Clarke(1983)]{clarke1983}
F.~Clarke.
\newblock \emph{{Optimization and Nonsmooth Analysis}}.
\newblock Wiley New York, 1983.
\newblock URL \url{https://doi.org/10.1137/1.9781611971309}.

\bibitem[Conti et~al.(2009)Conti, Held, Pach, Rumpf, and Schultz]{conti2009}
S.~Conti, H.~Held, M.~Pach, M.~Rumpf, and R.~Schultz.
\newblock Shape optimization under uncertainty—a stochastic programming
  perspective.
\newblock \emph{SIAM J. Optim.}, 19\penalty0 (4):\penalty0 1610--1632, 2009.
\newblock URL \url{10.1137/070702059}.

\bibitem[Conti et~al.(2018)Conti, Rumpf, Schultz, and T\"{o}lkes]{conti2018}
S.~Conti, M.~Rumpf, R.~Schultz, and S.~T\"{o}lkes.
\newblock Stochastic dominance constraints in elastic shape optimization.
\newblock \emph{SIAM J. Optim.}, 56\penalty0 (4):\penalty0 3021--3034, 2018.
\newblock URL \url{https://doi.org/10.1137/16M108313X}.

\bibitem[Farshbaf-Shaker et~al.(2018)Farshbaf-Shaker, Henrion, and
  H{\"o}mberg]{Farshbaf-Shaker2018}
M.~H. Farshbaf-Shaker, R.~Henrion, and D.~H{\"o}mberg.
\newblock Properties of chance constraints in infinite dimensions with an
  application to {PDE} constrained optimization.
\newblock \emph{Set-Valued Var. Anal}, 26:\penalty0 821--841, 2018.
\newblock URL \url{https://doi.org/10.1007/s11228-017-0452-5}.

\bibitem[{Farshbaf-Shaker} et~al.(2020){Farshbaf-Shaker}, Gugat, Heitsch, and
  Henrion]{Farshbaf-Shaker2020}
M.~H. {Farshbaf-Shaker}, M.~Gugat, H.~Heitsch, and R.~Henrion.
\newblock Optimal {{Neumann}} boundary control of a vibrating string with
  uncertain initial data and probabilistic terminal constraints.
\newblock \emph{SIAM J. Optim.}, 58\penalty0 (4):\penalty0 2288--2311, 2020.

\bibitem[Gahururu et~al.(2022)Gahururu, Hinterm{\"u}ller, and
  Surowiec]{Gahururu2022}
D.~B. Gahururu, M.~Hinterm{\"u}ller, and T.~M. Surowiec.
\newblock Risk-neutral {PDE}-constrained generalized {N}ash equilibrium
  problems.
\newblock \emph{Mathematical Programming}, pages 1--51, 2022.
\newblock URL \url{https://doi.org/10.1007/s10107-022-01800-z}.

\bibitem[Geiersbach and Hinterm\"{u}ller(2022)]{Geiersbach2022a}
C.~Geiersbach and M.~Hinterm\"{u}ller.
\newblock Optimality conditions and {M}oreau--{Y}osida regularization for
  almost sure state constraints.
\newblock \emph{ESAIM: COCV}, 28:\penalty0 Paper No. 80, 36, 2022.
\newblock URL \url{https://doi.org/10.1051/cocv/2022070}.

\bibitem[Geiersbach and Wollner(2021)]{Geiersbach2021c}
C.~Geiersbach and W.~Wollner.
\newblock Optimality conditions for convex stochastic optimization problems in
  {B}anach spaces with almost sure state constraints.
\newblock \emph{SIAM J. Optim.}, 4\penalty0 (31):\penalty0 2455--2480, 2021.
\newblock URL \url{https://doi.org/10.1137/20M1363558}.

\bibitem[Geletu et~al.(2017)Geletu, Hoffmann, Kl\"{o}ppel, and Li]{geletu17}
A.~Geletu, A.~Hoffmann, M.~Kl\"{o}ppel, and P.~Li.
\newblock An inner-outer approximation approach to chance constrained
  optimization.
\newblock \emph{SIAM Journal on Optimization}, 27:\penalty0 1834--1857, 2017.
\newblock URL \url{https://doi.org/10.1137/15M1049750}.

\bibitem[{Geletu, Abebe} et~al.(2020){Geletu, Abebe}, {Hoffmann, Armin},
  {Schmidt, Patrick}, and {Li, Pu}]{geletu2020}
{Geletu, Abebe}, {Hoffmann, Armin}, {Schmidt, Patrick}, and {Li, Pu}.
\newblock Chance constrained optimization of elliptic {PDE} systems with a
  smoothing convex approximation.
\newblock \emph{ESAIM: COCV}, 26:\penalty0 70, 2020.
\newblock URL \url{https://doi.org/10.1051/cocv/2019077}.

\bibitem[G{\"o}ttlich et~al.(2021)G{\"o}ttlich, Kolb, and Lux]{goettlich2021}
S.~G{\"o}ttlich, O.~Kolb, and K.~Lux.
\newblock Chance-constrained optimal inflow control in hyperbolic supply
  systems with uncertain demand.
\newblock \emph{Optim. Control Appl. Methods}, 42:\penalty0 566--589, 2021.
\newblock URL \url{https://doi.org/10.1002/oca.2689}.

\bibitem[Haller et~al.(2022)Haller, Meinlschmidt, and Rehberg]{Haller2022a}
R.~Haller, H.~Meinlschmidt, and J.~Rehberg.
\newblock Hölder regularity for domains of fractional powers of elliptic
  operators with mixed boundary conditions.
\newblock 2022.
\newblock URL
  \url{https://www.wias-berlin.de/publications/wias-publ/run.jsp?template=abstract&type=Preprint&year=2022&number=2959}.

\bibitem[Hantoute et~al.(2019)Hantoute, Henrion, and
  P\'{e}rez-Aros]{Hantoute2019}
A.~Hantoute, R.~Henrion, and P.~P\'{e}rez-Aros.
\newblock Subdifferential characterization of probability functions under
  {G}aussian distribution.
\newblock \emph{Math. Program.}, 174\penalty0 (1-2, Ser. B):\penalty0 167--194,
  2019.
\newblock URL \url{https://doi.org/10.1007/s10107-018-1237-9}.

\bibitem[Hong et~al.(2011)Hong, Yang, and Zhang]{Hong2011}
L.~J. Hong, Y.~Yang, and L.~Zhang.
\newblock Sequential convex approximations to joint chance constrained
  programs: A monte carlo approach.
\newblock \emph{Operations Research}, 59:\penalty0 617--630, 2011.
\newblock URL \url{https://doi.org/10.1287/opre.1100.0910}.

\bibitem[Ioffe and Levin(1972)]{Ioffe1972}
A.~D. Ioffe and V.~L. Levin.
\newblock Subdifferentials of convex functions.
\newblock \emph{Trudy Moskovskogo Matematicheskogo Obshchestva}, 26:\penalty0
  3--73, 1972.

\bibitem[Ioffe and Tikhomirov(1979)]{Ioffe-Tikhomirov}
A.~D. Ioffe and V.~M. Tikhomirov.
\newblock \emph{Theory of extremal problems}.
\newblock Elsevier North-Holland Amsterdam, 1979.

\bibitem[Kinderlehrer and Stampacchia(1980)]{Kinderlehrer1980}
D.~Kinderlehrer and G.~Stampacchia.
\newblock \emph{An introduction to variational inequalities and their
  applications}, volume~88 of \emph{Pure and Applied Mathematics}.
\newblock Academic Press, Inc. [Harcourt Brace Jovanovich, Publishers], New
  York-London, 1980.
\newblock URL \url{https://doi.org/10.1137/1.9780898719451}.

\bibitem[Kolvenbach et~al.(2018)Kolvenbach, Lass, and Ulbrich]{Kolvenbach2018}
P.~Kolvenbach, O.~Lass, and S.~Ulbrich.
\newblock An approach for robust {PDE}-constrained optimization with
  application to shape optimization of electrical engines and of dynamic
  elastic structures under uncertainty.
\newblock \emph{Optim. Eng.}, 19\penalty0 (3):\penalty0 697--731, Sep 2018.
\newblock URL \url{https://doi.org/10.1007/s11081-018-9388-3}.

\bibitem[Kouri and Surowiec(2018)]{Kouri2018}
D.~P. Kouri and T.~M. Surowiec.
\newblock Existence and optimality conditions for risk-averse {PDE}-constrained
  optimization.
\newblock \emph{SIAM-ASA J. Uncertain. Quantif.}, 6\penalty0 (2):\penalty0
  787--815, 2018.
\newblock URL \url{https://doi.org/10.1137/16m1086613}.

\bibitem[Kouri et~al.(2023)Kouri, Staudigl, and Surowiec]{Kouri2023}
D.~P. Kouri, M.~Staudigl, and T.~M. Surowiec.
\newblock A relaxation-based probabilistic approach for {{PDE-constrained}}
  optimization under uncertainty with pointwise state constraints.
\newblock \emph{Comput. Optim. Appl.}, 2023.
\newblock URL \url{https://doi.org/10.1007/s10589-023-00461-8}.

\bibitem[Levin(1974)]{Levin1974}
V.~L. Levin.
\newblock The {L}ebesgue decomposition for functionals on the vector-function
  space {$L_X^\infty$}.
\newblock \emph{Funct. Anal. its Appl.}, 8\penalty0 (4):\penalty0 314--317,
  1974.
\newblock URL \url{https://doi.org/10.1007/bf01075488}.

\bibitem[Mart{\'\i}nez-Frutos and Esparza(2018)]{martinez2018}
J.~Mart{\'\i}nez-Frutos and F.~Esparza.
\newblock \emph{Optimal Control of {PDEs} Under Uncertainty: An Introduction
  with Application to Optimal Shape Design of Structures}.
\newblock BCAM SpringerBriefs. Springer, 2018.
\newblock URL \url{https://doi.org/10.1007/978-3-319-98210-6}.

\bibitem[Milz(2023)]{Milz2023}
J.~Milz.
\newblock Consistency of {M}onte {C}arlo estimators for risk-neutral
  {PDE}-constrained optimization.
\newblock \emph{Appl. Math. Optim.}, 87\penalty0 (3):\penalty0 57, 2023.
\newblock URL \url{https://doi.org/10.1007/s00245-023-09967-3}.

\bibitem[Mohajerin~Esfahani et~al.(2015)Mohajerin~Esfahani, Sutter, and
  Lygeros]{Esfahani2015}
P.~Mohajerin~Esfahani, T.~Sutter, and J.~Lygeros.
\newblock Performance bounds for the scenario approach and an extension to a
  class of non-convex programs.
\newblock \emph{IEEE Trans. Automat. Control}, 60\penalty0 (1):\penalty0
  46--58, 2015.
\newblock URL \url{https://doi.org/10.1109/TAC.2014.2330702}.

\bibitem[Mordukhovich(2006)]{mordukhovich2006}
B.~S. Mordukhovich.
\newblock \emph{Variational Analysis and Generalized Differentiation Vol. I:
  Basic Theory}.
\newblock Springer Berlin Heidelberg New York, 2006.

\bibitem[Nemirovski and Shapiro(2006)]{Nemirovski2006}
A.~Nemirovski and A.~Shapiro.
\newblock Convex approximations of chance constrained programs.
\newblock \emph{SIAM J. Optim.}, 17\penalty0 (4):\penalty0 969--996, 2006.
\newblock URL \url{https://doi.org/10.1137/050622328}.

\bibitem[Pagnoncelli et~al.(2009)Pagnoncelli, Ahmed, and
  Shapiro]{Pagnoncelli2009}
B.~K. Pagnoncelli, S.~Ahmed, and A.~Shapiro.
\newblock Sample average approximation method for chance constrained
  programming: Theory and applications.
\newblock \emph{Journal of Optimization Theory and Applications}, 142:\penalty0
  399--416, 2009.
\newblock URL \url{https://doi.org/10.1007/s10957-009-9523-6}.

\bibitem[Pe\~{n}a Ordieres et~al.(2020)Pe\~{n}a Ordieres, Luedtke, and
  W\"{a}chter]{pena}
A.~Pe\~{n}a Ordieres, J.~R. Luedtke, and A.~W\"{a}chter.
\newblock Solving chance-constrained problems via a smooth sample-based
  nonlinear approximation.
\newblock \emph{SIAM Journal on Optimization}, 30:\penalty0 2221--2250, 2020.
\newblock URL \url{https://doi.org/10.1137/19M1261985}.

\bibitem[P{\'e}rez-Aros et~al.(2022)P{\'e}rez-Aros, Qui{\~{n}}inao, and
  Tejo]{Perez2022}
P.~P{\'e}rez-Aros, C.~Qui{\~{n}}inao, and M.~Tejo.
\newblock Control in probability for sde models of growth population.
\newblock \emph{Appl. Math. Optim.}, 86\penalty0 (3):\penalty0 44, 2022.
\newblock URL \url{https://doi.org/10.1007/s00245-022-09915-7}.

\bibitem[Peypouquet(2015)]{peypouquet2015convex}
J.~Peypouquet.
\newblock \emph{Convex Optimization in Normed Spaces: Theory, Methods and
  Examples}.
\newblock SpringerBriefs in Optimization. Springer International Publishing,
  2015.
\newblock URL \url{https://books.google.de/books?id=puwEogEACAAJ}.

\bibitem[Phelps(2001)]{Phelps2001}
R.~R. Phelps.
\newblock \emph{Lectures on {C}hoquet’s theorem}.
\newblock Springer, 2001.
\newblock URL \url{https://doi.org/10.1007/b76887}.

\bibitem[{Pr\'ekopa}(1995)]{prekopa1995}
A.~{Pr\'ekopa}.
\newblock \emph{{Stochastic programming}}.
\newblock Dordrecht: Kluwer Academic Publishers, 1995.
\newblock URL \url{https://doi.org/10.1007/978-94-017-3087-7}.

\bibitem[Ramponi(2018)]{Ramponi2018}
F.~A. Ramponi.
\newblock Consistency of the scenario approach.
\newblock \emph{SIAM J. Optim.}, 28\penalty0 (1):\penalty0 135--162, 2018.
\newblock URL \url{https://doi.org/10.1137/16M109819X}.

\bibitem[Rockafellar and Wets(1976{\natexlab{a}})]{Rockafellar1976c}
R.~Rockafellar and R.~Wets.
\newblock Stochastic convex programming: singular multipliers and extended
  duality singular multipliers and duality.
\newblock \emph{Pac. J. Appl. Math.}, 62\penalty0 (2):\penalty0 507--522,
  1976{\natexlab{a}}.
\newblock URL \url{https://doi.org/10.2140/pjm.1976.62.507}.

\bibitem[Rockafellar and Wets(1975)]{Rockafellar1975}
R.~T. Rockafellar and R.~J.-B. Wets.
\newblock Stochastic convex programming: Kuhn--{T}ucker conditions.
\newblock \emph{Journal of Mathematical Economics}, 2\penalty0 (3):\penalty0
  349--370, 1975.
\newblock URL \url{https://doi.org/10.1016/0304-4068(75)90003-8}.

\bibitem[Rockafellar and Wets(1976{\natexlab{b}})]{Rockafellar1976}
R.~T. Rockafellar and R.~J.-B. Wets.
\newblock Stochastic convex programming: basic duality.
\newblock \emph{Pac. J. Appl. Math.}, 62\penalty0 (1):\penalty0 173--195,
  1976{\natexlab{b}}.
\newblock URL \url{https://doi.org/10.2140/pjm.1976.62.173}.

\bibitem[Rockafellar and Wets(1976{\natexlab{c}})]{Rockafellar1976b}
R.~T. Rockafellar and R.~J.-B. Wets.
\newblock Stochastic convex programming: relatively complete recourse and
  induced feasibility.
\newblock \emph{SIAM J. Optim.}, 14\penalty0 (3):\penalty0 574--589,
  1976{\natexlab{c}}.
\newblock URL \url{https://doi.org/10.1137/0314038}.

\bibitem[Ryan(2002)]{Ryan2002}
R.~A. Ryan.
\newblock \emph{Introduction to tensor products of {B}anach spaces}.
\newblock Springer Monographs in Mathematics. Springer-Verlag London, Ltd.,
  London, 2002.
\newblock URL \url{https://doi.org/10.1007/978-1-4471-3903-4}.

\bibitem[Schuster et~al.(2022)Schuster, Strauch, Gugat, and Lang]{Schuster2022}
M.~Schuster, E.~Strauch, M.~Gugat, and J.~Lang.
\newblock Probabilistic constrained optimization on flow networks.
\newblock \emph{Optimization and Engineering}, 23\penalty0 (2):\penalty0 1--50,
  2022.

\bibitem[Shapiro et~al.(2021)Shapiro, Dentcheva, and
  Ruszczy\'{n}ski]{Shapiro2009}
A.~Shapiro, D.~Dentcheva, and A.~Ruszczy\'{n}ski.
\newblock \emph{Lectures on stochastic programming---modeling and theory},
  volume~28 of \emph{MOS-SIAM Series on Optimization}.
\newblock Society for Industrial and Applied Mathematics (SIAM), Philadelphia,
  PA; Mathematical Optimization Society, Philadelphia, PA, 2021.
\newblock URL \url{https://doi.org/10.1137/1.9781611976595}.
\newblock Third edition.

\bibitem[Teka et~al.(2023)Teka, Geletu, and Li]{Teka23}
K.~Teka, A.~Geletu, and P.~Li.
\newblock Structural properties and convergence approach for chance-constrained
  optimization of boundary-value elliptic partial differential equation
  systems.
\newblock In B.~Yang and D.~Stipanovic, editors, \emph{Nonlinear Systems},
  chapter~7. IntechOpen, Rijeka, 2023.
\newblock URL \url{https://doi.org/10.5772/intechopen.104620}.

\bibitem[Tong et~al.(2022)Tong, Subramanyam, and Rao]{Tong}
S.~Tong, A.~Subramanyam, and V.~Rao.
\newblock Optimization under rare chance constraints.
\newblock \emph{SIAM Journal on Optimization}, 32:\penalty0 930--958, 2022.
\newblock URL \url{https://doi.org/10.1137/20M1382490}.

\bibitem[van Ackooij(2020)]{vanAckooij2020}
W.~van Ackooij.
\newblock A discussion of probability functions and constraints from a
  variational perspective.
\newblock \emph{Set-Valued Var. Anal}, 28\penalty0 (4):\penalty0 585--609,
  2020.
\newblock URL \url{https://doi.org/10.1007/s11228-020-00552-2}.

\bibitem[{van Ackooij} and Henrion(2014)]{vanAckooij_Henrion_2014}
W.~{van Ackooij} and R.~Henrion.
\newblock Gradient formulae for nonlinear probabilistic constraints with
  {G}aussian and {G}aussian-like distributions.
\newblock \emph{{SIAM} J. Optim.}, 24\penalty0 (4):\penalty0 1864--1889, 2014.
\newblock URL \url{https://doi.org/10.1137/130922689}.

\bibitem[van Ackooij and P\'{e}rez-Aros(2019)]{ackkoij-perez-2019}
W.~van Ackooij and P.~P\'{e}rez-Aros.
\newblock Generalized differentiation of probability functions acting on an
  infinite system of constraints.
\newblock \emph{SIAM J. Optim.}, 29\penalty0 (3):\penalty0 2179--2210, 2019.
\newblock URL \url{https://doi.org/10.1137/18M1181262}.

\bibitem[van Ackooij et~al.(2020)van Ackooij, Henrion, and
  P{\'e}rez-Aros]{vanAckooij2020b}
W.~van Ackooij, R.~Henrion, and P.~P{\'e}rez-Aros.
\newblock Generalized gradients for probabilistic/robust (probust) constraints.
\newblock \emph{Optimization}, 69:\penalty0 1451 -- 1479, 2020.
\newblock URL \url{https://doi.org/10.1080/02331934.2019.1576670}.

\end{thebibliography}
\end{document}